\newtheorem{theorem}{Theorem}[section]
\newtheorem{proposition}[theorem]{Proposition}
\newtheorem{lemma}[theorem]{Lemma}
\newtheorem{corollary}[theorem]{Corollary}
\theoremstyle{definition}
\newtheorem{definition}[theorem]{Definition}
\newtheorem{example}{Example}
\newtheorem{remark}{Remark}
\DeclareMathOperator{\Mat}{Mat}
\DeclareMathOperator{\Ad}{Ad}
\DeclareMathOperator{\ad}{ad}
\DeclareMathOperator{\Stab}{Stab}
\DeclareMathOperator{\End}{End}
\DeclareMathOperator{\id}{id}
\DeclareMathOperator{\card}{card}
\DeclareMathOperator{\Span}{Span}
\DeclareMathOperator{\Lie}{Lie}
\newcommand{\C}{\mathbb{C}}
\newcommand{\R}{\mathbb{R}}
\newcommand{\g}{\mathfrak{g}}
\newcommand{\h}{\mathfrak{h}}
\newcommand{\G}{\mathcal{G}}
\title{A product theorem in simple Lie groups}
\author{Nicolas de Saxcé
\thanks{The author is supported by ERC AdG Grant 267259}
}
\begin{document}

\maketitle

\begin{abstract}
We prove a discretized Product Theorem for general simple Lie groups, in the spirit of Bourgain's Discretized Sum-Product Theorem. 
\end{abstract}

\section{Introduction}

The goal of this paper is to prove a discretized Product Theorem for simple Lie groups. The theorem is a growth statement in the spirit of Bourgain's \enquote{discretized sum-product} \cite{bourgainringconjecture, bourgainprojection}, but in the context of simple Lie groups.

If $A$ is a subset of a compact metric space, for $\delta>0$, we denote by $N(A,\delta)$ the minimal cardinality of a cover of $A$ by balls of radius $\delta$.
The theorem we prove is the following.

\begin{theorem}[Product Theorem]\label{sigmai}
Let $G$ be a simple real Lie group of dimension $d$. There exists a neighborhood $U$ of the identity in $G$ such that the following holds.\\
Given $\sigma\in (0,d)$, there exists $\epsilon=\epsilon(\sigma)>0$ such that, for $\delta>0$ sufficiently small, if $A\subset U$ is a set satisfying
\begin{enumerate}
\item $N(A,\delta)\leq\delta^{-\sigma-\epsilon}$
\item $\forall \rho\geq\delta$, $N(A,\rho) \geq \delta^{\epsilon}\rho^{-\sigma}$
\item for any closed connected subgroup $H\subset G$, there exists $a\in A$ with $d(a,H)>\delta^\epsilon$,
\end{enumerate}
then
$$N(AAA,\delta)>\delta^{-\epsilon} N(A,\delta).$$
\end{theorem}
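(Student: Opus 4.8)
The plan is to argue by contradiction. Suppose $A\subset U$ satisfies (1)--(3) but $N(AAA,\delta)\le\delta^{-\epsilon}N(A,\delta)$, and aim to show that $A$ must then concentrate $\delta^{\epsilon}$-close to a proper closed connected subgroup, contradicting (3). The first, routine, step is discretized Ruzsa calculus: small tripling propagates, so $N(A^{k},\delta)\le\delta^{-C_{k}\epsilon}N(A,\delta)$ for every fixed $k$; after translating $A$ by a popular element and passing to a symmetric localized piece one may assume $e\in A=A^{-1}$, and that at scale $\delta$ the set $A$ behaves like a symmetric approximate subgroup of normalized size $\delta^{-\sigma}$ still inheriting the lower regularity bound (2) and the non-concentration property (3), up to enlarging the implied constants and $\epsilon$.

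Next I would linearize. Work in exponential coordinates near $e$ and put $\widetilde A=\log A\subset\g\cong\R^{d}$; by Baker--Campbell--Hausdorff, $\log(ab)=\log a+\log b+\tfrac12[\log a,\log b]+\cdots$, so the controlled growth of the powers $A^{k}$ forces $\widetilde A$ to behave, at every scale $\rho\in[\delta,1]$ (after localizing around a point of $A$ and rescaling the $\rho$-piece by $\rho^{-1}$), like a discretized additive $(\sigma,\epsilon)$-set that is moreover \emph{approximately closed under the Lie bracket}; at the smallest scales the bracket term is a genuine perturbation, so the local picture is essentially additive. The crucial point is the interaction with the adjoint representation: since $\exp(\Ad(a)X)=a\,\exp(X)\,a^{-1}$, one has $\{\Ad(a)(\log b):a,b\in A\}=\log\{aba^{-1}:a,b\in A\}\subset\log(AAA)$, hence $N\big(\Ad(A)\cdot\widetilde A,\delta\big)\le\delta^{-C\epsilon}N(\widetilde A,\delta)$; and $\Ad$ being a local isomorphism (as $G$ is simple, hence of finite center), $\Ad(A)$ is itself a $(\sigma,C\epsilon)$-set of matrices with small tripling.

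The engine is then Bourgain's discretized sum--product theorem, in its projection-theorem form for linear actions: a $(\sigma,\epsilon)$-subset $\widetilde A$ of $\R^{d}$ on which a $(\sigma,\epsilon)$-set of linear maps $\Ad(A)$ acts \emph{non-degenerately} must satisfy $N(\Ad(A)\cdot\widetilde A,\delta)\ge\delta^{-\epsilon'}N(\widetilde A,\delta)$. Since this contradicts the previous paragraph, the action must be degenerate, and unwinding degeneracy produces a proper subspace $W\subsetneq\g$ that is approximately $\Ad(A)$-invariant and carries a large portion of $\widetilde A$ within $\delta^{c\epsilon}$. Combined with the approximate bracket-closure of $\widetilde A$ and the fact that $\g$ is \emph{simple} (so the only $\Ad(G)$-invariant subspaces are $0$ and $\g$), $W$ must be $\delta^{c\epsilon}$-close to a proper subalgebra $\h\subsetneq\g$, whence $A$ is $\delta^{c\epsilon}$-close to the connected subgroup $H=\exp\h$ — contradicting (3) once $\epsilon$ is small relative to $c$.

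The main obstacle I anticipate is that, unlike $G=\mathrm{SL}_{2}(\R)$ (Bourgain--Gamburd), a general simple Lie algebra has a whole lattice of intermediate subalgebras, so "concentration near a proper subalgebra" is not a clean single alternative: one must set up a recursion — on $\dim G$, or on the poset of subalgebras — applying the theorem both inside the caught subgroup $H$ and in a transverse direction, and then splicing the two growth statements. Running the sum--product input uniformly over \emph{all} scales $\rho\in[\delta,1]$ (the Bourgain--Gamburd multiscale bookkeeping), verifying the non-concentration hypotheses of the projection theorem for $\Ad(A)$ acting on $\widetilde A$ purely from (1)--(3), and controlling the accumulation of $\epsilon$-losses through all of these steps so that a net gain $\delta^{-\epsilon}$ survives, is where essentially all the difficulty lies; I also expect the passage from "discretized approximate subring" to "genuinely near a subalgebra" to require the full strength of the discretized sum--product theorem rather than any soft argument.
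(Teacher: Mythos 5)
The central difficulty is that the ``engine'' you invoke---a discretized sum-product or projection theorem for a $(\sigma,\epsilon)$-set of \emph{arbitrary} linear maps acting on a $(\sigma,\epsilon)$-set of vectors---is not an available theorem, and the paper does not use anything of the sort. The one sum-product-type input the paper does use (Lemma~\ref{bgproduct}, taken from Bourgain--Gamburd's work on $SU(d)$) is a statement about sets of \emph{near-diagonal matrices}; to get into that setting one must first locate a maximal torus $T$ whose $\delta^{1-O(\epsilon)}$-neighborhood captures a $\delta^{O(\epsilon)}N(A,\delta)^{1/d}$-portion of $A^{-1}A$ (Corollary~\ref{richtorus}). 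This ``rich torus'' step is where the Larsen--Pink machinery of Section~\ref{nonconcentration} does the heavy lifting: escape from subvarieties (Proposition~\ref{escape}, via Lemma~\ref{aregular}) produces a very regular element $a$ in a product set of $A$, the Larsen--Pink inequality (Proposition~\ref{lp}) bounds the concentration of $A^C$ on the $(d-1)$-dimensional conjugacy class $C_a$, and a pigeonhole on the conjugation map $x\mapsto xax^{-1}$ then forces many conjugators to cluster near the centralizer $T$ of $a$. Your proposal has neither a torus nor a Larsen--Pink estimate, so it offers no route into the commutative setting where the sum-product input actually applies; in particular, $\Ad(A)$ acting on $\log A$ is not covered by Bourgain's projection theorem, which concerns linear projections of planar sets, not orbits under sets of matrices.

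There is a second gap. Even granting some sum-product ingredient, the conclusion you want to unwind---that failure of growth produces a proper $\Ad(A)$-almost-invariant subspace $W$ close to a subalgebra---runs into a dimension mismatch: $\sigma\in(0,d)$ is an arbitrary real number and need not equal $\dim\h$ for any proper subalgebra $\h$, so ``$\log A$ is an approximate subalgebra of dimension $\sigma$'' carries no information about a genuine subalgebra being $\delta^{c\epsilon}$-close, and hypothesis~(3) is not the one that gets contradicted. The paper's final contradiction is instead with hypothesis~(1): from the segment produced by Lemma~\ref{abc}, one rotates into $d$ linearly independent directions using escape in the adjoint representation, shows $A^C$ contains a $\delta$-net of a whole ball $B_{\delta^{1-\tau}}$ (Lemma~\ref{smallball}), and then runs a multiscale iteration across scales $\delta_k=\delta^{(1-\tau)^k}$ to force $N(A^C,\delta)\ge\delta^{-(\sigma+d)/2+O(\epsilon)}$, contradicting $N(A,\delta)\le\delta^{-\sigma-\epsilon}$ because $\sigma<d$ (proof of Theorem~\ref{sigma}). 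This amplification loop---and the precise role of hypothesis~(2) in running it uniformly across scales---is entirely absent from your outline. You correctly identify the adjoint representation as the right vehicle and honestly flag where the hard step lies, but the argument as sketched does not close: the paper's route goes through regular elements, maximal tori and submanifold complexity rather than through a reduction of the group to a discretized approximate ring.
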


For the group $SU(2)$, Theorem~\ref{sigmai} is due to Bourgain and Gamburd \cite[Proposition~6]{bourgaingamburdsu2}, and for the group $SL(2,\mathbb{R})$, to Bourgain and Yehudayoff \cite[Theorem~4.4]{bourgainyehudayoff}. In both cases, the proof is based on Helfgott's argument in \cite{helfgottsl2}, using trace to show expansion. Our approach in the present paper is slightly different; the proof is based on the strategy developed by Bourgain and Gamburd in \cite{bourgaingamburdsud}, taking advantage of the scale invariance property of the set $A$. We also make use of some ideas of Breuillard, Green and Tao \cite{breuillardgreentao} (see also Pyber and Szabó \cite{pyberszabo}) for the proof of some Larsen-Pink type inequalities.

\bigskip

Interest in discretized results of the type of Theorem~\ref{sigmai} started with the work of Katz and Tao \cite{katztao} and later of Bourgain \cite{bourgainringconjecture,bourgainprojection} on the Erd\H{o}s-Volkmann Ring Conjecture. Since then, those discretized results have found many applications, among which the work of Bourgain, Furman, Lindenstrauss and Mozes on quantitative equidistribution of orbits of semigroups on the torus \cite{bflm} and that of Bourgain and Gamburd \cite{bourgaingamburdsu2,bourgaingamburdsud} on the spectral gap property for finitely generated subgroups of $SU(d)$. In fact, our product theorem can be used to prove the spectral gap property for subgroups generated by algebraic elements in an arbitrary compact simple Lie group \cite{benoistsaxcesg}.

\paragraph{}

The plan of the paper is as follows. Section~\ref{nonconcentration} is devoted to the proof of some Larsen-Pink type inequalities for approximate subgroups of $G$. The proof of the Product Theorem~\ref{sigma} is given in Section~\ref{expansion}.

\paragraph{}

For us, a simple Lie group will be a real Lie group whose Lie algebra is simple. We will also make use of some classical notation:
\begin{itemize}
\item[-] The Landau notation: $O(\epsilon)$ stands for a quantity bounded in absolute value by $C\epsilon$, for some constant $C$ (generally depending on the ambient group $G$).
\item[-] The Vinogradov notation: we write $x\ll y$ if $x\leq Cy$ for some constant $C$ (again, possibly depending on the ambient group). We will also write $x\simeq y$ if $x\ll y$ and $x\gg y$.
\end{itemize}

\paragraph{Acknowledgements}
I am very grateful to Yves Benoist for his good advice and for his detailed comments on an earlier version of the paper. I also thank Mike Hochman, Elon Lindenstrauss and Péter Varjú for useful discussions, and Emmanuel Breuillard for introducing me to this topic, during my doctoral thesis under his supervision.

\section{Larsen-Pink type inequalities}\label{nonconcentration}

\subsection{Escaping from subvarieties}

Our goal is to show here that if a subset $A$ of a simple Lie group $G$ is away from any closed subgroup -- in a quantitative sense given below -- then for any algebraic subvariety $V$ of $G$, one can obtain in a product set of $A$ an element that is away from $V$. The idea of this \enquote{escape from subvarieties} originates in the work of Eskin-Mozes-Oh \cite{eskinmozesoh}. The main difference here is that we will need a lower bound for the distance to the subvariety from which we want to escape.\\

We start by a preliminary proposition that describes the shape of maximal connected subgroups of a simple Lie group in a neighborhood of the identity.

\begin{proposition}\label{maximal}
Let $G$ be a simple Lie group. There exists a neighborhood $O$ of $0$ in the Lie algebra $\g$ of $G$ such that the exponential map induces a diffeomorphism from $O$ to its image $U$ in $G$ and moreover, whenever $H$ is a maximal proper closed connected subgroup,
$$\exp: O\cap \h \rightarrow U\cap H \mbox{is a diffeomorphism}.$$
\end{proposition}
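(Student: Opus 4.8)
Since $d\exp_0=\id$, there is a star-shaped open set $O_0\ni 0$ in $\g$ on which $\exp$ restricts to a diffeomorphism onto an open set $U_0\subseteq G$. For any closed connected subgroup $H$ with Lie algebra $\h$ one has $\exp(\h)\subseteq H$, so for every $O\subseteq O_0$ the map $\exp\colon O\cap\h\to U\cap H$ is automatically an injective immersion with image contained in $U\cap H$; the content of the proposition is therefore only the reverse inclusion
\[
\exp^{-1}(H)\cap O\subseteq\h ,
\]
required to hold \emph{with one and the same} $O$ for \emph{all} maximal proper closed connected subgroups $H$. For a single $H$ this is immediate from Cartan's closed-subgroup theorem: $H$ is an embedded submanifold with $T_eH=\h$, so once $U_0$ is small enough $H\cap U_0$ is connected and equals $\exp(\h\cap O_0')$, whence $\exp^{-1}(H)\cap O_0'=\h\cap O_0'$. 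Everything thus reduces to bounding the radius of such an $O_0'$ away from $0$ uniformly in $H$, and the obstruction is that there are infinitely many maximal $H$ --- in particular all the conjugates of a fixed one.

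To obtain this uniformity I would first reduce to a finite list, invoking the classical fact that a simple real Lie group has only finitely many conjugacy classes of maximal proper closed connected subgroups (Dynkin's classification of maximal subalgebras over $\C$, Karpelevich over $\R$); the representatives $H_1,\dots,H_m$ may be taken real-algebraic, their Lie algebras being maximal, hence algebraic, subalgebras. It is then enough to find, for each $i$, a radius $r_i>0$ such that $\exp^{-1}(gH_ig^{-1})\cap B(0,r_i)\subseteq\Ad(g)\h_i$ for every $g\in G$. Using the identity $\exp\circ\Ad(g^{-1})=c_{g^{-1}}\circ\exp$ (with $c_g$ conjugation by $g$), the negation of this statement produces, for arbitrarily small $Y\in B(0,r_i)$, an element $g\in G$ with $Z:=\Ad(g^{-1})Y\in\exp^{-1}(H_i)\setminus\h_i$; so one has to show that an element of $\exp^{-1}(H_i)\setminus\h_i$ cannot be \enquote{pushed arbitrarily close to $0$} by the adjoint action, once $r_i$ is small.

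The argument for this uses the Jordan decomposition $Z=Z_s+Z_n$ in $\g$ together with the $\Ad(G)$-invariant polynomials $c_1(\ad X),\dots,c_d(\ad X)$, the coefficients of the characteristic polynomial of $\ad X$, whose common zeros are the ad-nilpotent elements. If $Z$ is nilpotent, then $\exp(Z)$ is unipotent and the unipotent one-parameter subgroup it generates --- being the Zariski closure of $\{\exp(Z)^n\}$ --- lies in the algebraic group $H_i$, forcing $Z\in\h_i$. Hence $Z\notin\h_i$ forces $Z_s\neq0$, and since $\exp(Z)=\exp(Z_s)\exp(Z_n)$ is the Jordan decomposition of $\exp(Z)$ in the algebraic group $H_i$, we also get $Z_s\in\exp^{-1}(H_i)\setminus\h_i$ and $Z_n\in\h_i$. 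Now if $\Ad(g)Z$ is very close to $0$, then $\exp(Z)$, being conjugate to an element very close to $e$, has all $\Ad$-eigenvalues close to $1$, so every eigenvalue of $\ad Z$ (equivalently of $\ad Z_s$) lies very close to $2\pi i\Z$; and these eigenvalues are not all $0$, for otherwise $\ad Z_s=0$, hence $Z_s=0$ and $Z=Z_n$ would be nilpotent, a contradiction. When some eigenvalue is close to $2\pi i m$ with $m\neq0$, the invariant $c_2(\ad Z)$ --- essentially a sum of squares of the imaginary parts of the nonzero eigenvalues --- is bounded below by a positive constant; since $c_2(\ad(\Ad(g)Z))=c_2(\ad Z)$ while $\Ad(g)Z\to0$ forces this quantity to tend to $0$, we obtain the desired contradiction.

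The one remaining, more delicate, case is when all eigenvalues of $\ad Z$ are close to $0$ but $Z$ itself is allowed to go to infinity; this is excluded by a \L ojasiewicz-type inequality for the subanalytic set $\exp^{-1}(H_i)$ in a neighborhood of $\h_i$ --- available because, at every point $Z_0$ of $\exp^{-1}(H_i)$ where $\exp(Z_0)$ is unipotent, $\exp$ is a local diffeomorphism and $\exp^{-1}(H_i)$ coincides with $\h_i$ near $Z_0$, so that $\exp^{-1}(H_i)$ can come close to the nilpotent cone only near $\h_i$. With all this in hand one takes $O:=B(0,\min_i r_i)\cap O_0$ and $U:=\exp(O)$. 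The principal obstacle throughout is exactly the passage from \enquote{each $H$} to \enquote{uniformly in $H$}, i.e.\ defeating the non-compactness of the conjugation action of $G$ on $\g$; the two indispensable classical inputs are the finiteness of the number of conjugacy classes of maximal subgroups and the algebraicity of maximal subalgebras.
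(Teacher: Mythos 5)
Your proposal correctly isolates the crux (uniformity of $O$ over all maximal $H$), but then takes a long detour where the paper has a one-line observation. Because $G$ is simple, a maximal proper closed connected subgroup $H$ with Lie algebra $\h$ satisfies $H=N_G(\h)^0$: indeed $H\subseteq N_G(\h)^0$ by connectedness, $\h\neq 0$ since $G$ has nontrivial connected subgroups, and $N_G(\h)^0\neq G$ since $\h$ is not an ideal. Hence for $x=e^X\in H\cap U$ one knows $(\Ad e^X)\h\subseteq\h$, and it suffices to deduce $(\ad X)\h\subseteq\h$, which puts $X\in N_\g(\h)=\Lie N_G(\h)^0=\h$. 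The lemma in the paper does exactly this: on a fixed ball $O$ the series $\ad X=-\sum_{n\geq 1}\tfrac{(1-e^{\ad X})^n}{n}$ converges, and if $e^{\ad X}=\Ad e^X$ preserves a closed subspace $\h$ then every term does, hence so does $\ad X$. The ball $O$ is manifestly independent of $\h$, so uniformity over $H$ is free --- no classification of maximal subgroups, no Jordan decomposition, no invariant theory.

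Your route, besides being a great deal heavier (it leans on the finiteness of conjugacy classes of maximal closed connected subgroups over $\R$, Karpelevich--Mostow, and on algebraicity of maximal subalgebras), also has internal problems. Your case (a) is vacuous: $\ad Z$ and $\ad(\Ad(g)Z)$ are conjugate operators, so they have \emph{identical} spectra; if $\Ad(g)Z\to 0$ then every eigenvalue of $\ad Z$ tends to $0$, not merely to $2\pi i\Z$, and there is no integer $m\neq 0$ to exploit. (Relatedly, the description of $c_2$ as a sum of squares of imaginary parts is off; with $\sum\lambda_i=0$ one has $-2c_2=\sum\lambda_i^2$, whose sign is not controlled if the $m_i$ were unbounded.) So you are always in case (b), and there the \L ojasiewicz sketch is both unconvincing as stated (it is only local near $\h_i$, while you explicitly allow $Z\to\infty$) and unnecessary: since $\ad Z_s$ is semisimple with all eigenvalues small, its operator norm equals its spectral radius and is therefore small, so $Z_s$ itself is small by injectivity of $\ad$; as you showed $Z_s\in\exp^{-1}(H_i)\setminus\h_i$, the single-$H_i$ statement on a fixed neighborhood of $0$ then already yields the contradiction. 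Even with these repairs, the argument remains far more intricate and far less self-contained than the normalizer observation the paper exploits.
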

\begin{proof}
Choosing a neighborhood $O$ such that the exponential map induces a diffeomorphism from $O$ to its image $U$ in $G$, we want to ensure that for any maximal subgroup $H$, whenever $x\in U\cap H$, one has $X:=\log x \in\h$. 
As $H$ is maximal, it must be equal to the identity component of the normalizer of its Lie algebra in the adjoint representation, so what we have to check is that $(\ad X)\h \subset \h$.  The following lemma exactly says that this can always be ensured by choosing $O$ small enough.
\end{proof}

\begin{lemma}
Let $G$ be a Lie group. There exists a neighborhood $O$ of the identity in $\g$ such that for any $X\in O$ and any linear subspace $\h$ in $\g$,
$$(\Ad e^X)\h \subset \h \Longleftrightarrow (\ad X)\h \subset \h.$$
\end{lemma}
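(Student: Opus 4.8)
The plan is to reduce the statement to the purely linear-algebraic fact that, when $X$ is small, $\ad X$ is a real polynomial in $\Ad e^X=e^{\ad X}$, from which it follows that these two endomorphisms of $\g$ have exactly the same invariant subspaces.

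The implication $(\ad X)\h\subset\h\Rightarrow(\Ad e^X)\h\subset\h$ is immediate and needs no smallness assumption: the partial sums $\sum_{k\le n}\frac1{k!}(\ad X)^k$ all preserve $\h$, and they converge to $e^{\ad X}$, which equals $\Ad e^X$ because $\Ad\colon G\to GL(\g)$ is a homomorphism with differential $\ad$, so $\Ad\circ\exp=\exp\circ\ad$. Thus the whole content is in the reverse implication.

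For that, I would first fix $O$: choose it so that $\sup_{X\in O}$ of the spectral radius of $\ad X$ is $<\pi$; this is possible since $X\mapsto\ad X$ is linear, hence continuous, and vanishes at $0$. The key step is then the claim: \emph{if $A\in\End(\g)$ has all complex eigenvalues of modulus $<\pi$, there is a real polynomial $p$ with $A=p(e^A)$.} Indeed, $\exp$ is injective with non-vanishing derivative on $\{|z|<\pi\}$, which contains $\mathrm{Spec}(A)$, so Lagrange--Hermite interpolation (equivalently, holomorphic functional calculus) produces a complex polynomial $q$ with $A=q(e^A)$, and replacing the coefficients of $q$ by their real parts and using that $A$ is real gives a real $p$. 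Concretely one can also argue on generalized eigenspaces: on the one attached to $\lambda$ one has $A=\lambda\id+N$ with $N$ nilpotent and $e^A=e^\lambda(\id+N+\tfrac12N^2+\cdots)$, and reverting the power series $N\mapsto N+\tfrac12N^2+\cdots$ (which terminates, $N$ being nilpotent) expresses $N$, hence $A$, as a polynomial in $e^A$ on that block; since the distinct eigenvalues $\lambda$ give distinct values $e^\lambda$, the Chinese remainder theorem assembles a single global polynomial, and one passes to $\R$ as above. Granting this, for $X\in O$ we apply it to $A=\ad X$: if $(\Ad e^X)\h\subset\h$, then $\h$ is invariant under every polynomial in $\Ad e^X=e^{\ad X}$, in particular under $p(\Ad e^X)=\ad X$, as desired.

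The main obstacle is this linear-algebra step, and especially the fact that the smallness hypothesis is genuinely needed: $\exp$ is only \emph{locally} injective, and if one allows an eigenvalue of modulus $\pi$ the conclusion fails — for instance $A$ a rotation by angle $\pi$ in a plane, where $e^A=-\id$ on that plane leaves every line invariant while $A$ leaves none — so the argument must use that $\mathrm{Spec}(\ad X)$ lies in a small disc. Some care is also required for the non-semisimple (nilpotent) part of $\ad X$ and for the passage from complex to real polynomials, but both points are routine.
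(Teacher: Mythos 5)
Your proof is correct, but it takes a genuinely different route from the paper's. The paper simply observes that on a small enough neighborhood $O$, one has the convergent logarithmic power series
\[
\ad X \;=\; -\sum_{n\ge 1}\frac{(1-e^{\ad X})^n}{n}
\]
(the sign is off by $-1$ in the printed version, but that is immaterial), and then argues that if $(\Ad e^X)\h\subset\h$, every partial sum maps $Y\in\h$ into $\h$, so the limit $(\ad X)Y$ is in $\h$ because $\h$ is closed. You instead produce a single polynomial $p$ with $p(e^{\ad X})=\ad X$ by Lagrange--Hermite interpolation (or holomorphic functional calculus), after shrinking $O$ so that $\mathrm{Spec}(\ad X)$ lies in the disc of radius $\pi$ where $\exp$ is injective with nonvanishing derivative. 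Both are correct and hinge on the same idea, namely that $\ad X$ is a ``function'' of $e^{\ad X}$ that a $e^{\ad X}$-invariant subspace must also be invariant under; the trade-off is that your argument produces a polynomial (so closedness of $\h$ plays no role, though in finite dimensions it is automatic) at the cost of invoking Jordan structure, distinctness of $e^{\lambda_i}$, and a descent from $\C[t]$ to $\R[t]$, while the paper's series argument bypasses all spectral analysis. Your remark that the bound $<\pi$ is sharp (rotation by $\pi$, where $e^{A}=-\id$ on a plane has many invariant lines but $A$ has none) is a nice sanity check, though not needed for the proof; the paper's smallness condition is simply that the log series converge, which is a more restrictive but equally unobtrusive requirement.
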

\begin{proof}
Indeed, take a neighborhood $O$ of zero in $\g$ such that for all $X$ in $O$, one has
$$\ad X = \sum_{n\geq 1} \frac{(1-e^{\ad X})^n}{n},$$
and suppose $(\Ad e^X)\h \subset \h$. Let $Y\in \h$. Using the identity $\Ad e^X = e^{\ad X}$, we see that for all $n$, $(1-e^{\ad X})^nY \in \h$ and therefore, as $\h$ is closed,
$$(\ad X)Y = \sum_{n\geq 1} \frac{(1-e^{\ad X})^nY}{n}\in \h.$$
\end{proof}

In the case the simple Lie group $G$ has trivial center, it is equal to the (connected component of the) group of real points of a simple linear algebraic group, and Proposition~\ref{maximal} yields the following.

\begin{lemma}\label{neutre}
Let $G$ be a simple Lie group with trivial center. There exists a neighborhood $U$ of the identity in $G$ such that for any $g\in U$, for any maximal proper algebraic subgroup $H$,
$$d(g,H)=d(g,H^0),$$
where $H^0$ is the identity component of $H$.
\end{lemma}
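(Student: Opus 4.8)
Write $\bG$ for the simple linear algebraic group with $G=\bG(\R)^{0}$, and read $H,H^{0}$ below as the corresponding subsets of $G$. The plan is to prove first a local statement and then deduce the lemma by a triangle inequality. The local statement is: there is a neighbourhood $V$ of $e$ such that $V\cap H\subset H^{0}$ for every maximal proper algebraic subgroup $H$ of $\bG$. Granting this, take $U=B(e,r)$ with $r$ so small that $B(e,3r)\subset V$. Since $e\in H^{0}$ we always have $d(g,H)\le d(g,H^{0})$; for the reverse inequality, fix $g\in U$, let $0<\eta<r$, and pick $h\in H$ with $d(g,h)<d(g,H)+\eta\le d(g,e)+\eta<r+\eta$. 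Then $d(e,h)<2r+\eta<3r$, so $h\in V\cap H\subset H^{0}$ and $d(g,H^{0})\le d(g,h)<d(g,H)+\eta$; letting $\eta\to0$ gives $d(g,H^{0})\le d(g,H)$, hence equality.

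To prove the local statement I would separate the cases $\dim H>0$ and $\dim H=0$. Suppose $\dim H>0$ and set $\h=\Lie(H)=\Lie(H^{0})$, a non-zero proper subalgebra of the simple algebra $\g$. Since $H$ normalizes its identity component it normalizes $\h$, so $H\subset N_{\bG}(\h)$; and $N_{\bG}(\h)\neq\bG$, for otherwise $\h$ would be a non-trivial proper ideal of $\g$. By maximality of $H$ this forces $H=N_{\bG}(\h)$, and passing to Lie algebras gives $N_{\g}(\h)=\Lie(N_{\bG}(\h))=\h$: the subalgebra $\h$ is self-normalizing. Now if $h\in H$ is close enough to $e$, write $h=\exp X$ with $X$ small (as $\exp$ is a local diffeomorphism near $0$). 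From $h\in N_{\bG}(\h)$ we get $(\Ad e^{X})\h\subset\h$, and the lemma following Proposition~\ref{maximal}, applied to the subspace $\h$, turns this into $(\ad X)\h\subset\h$, i.e.\ $X\in N_{\g}(\h)=\h$; hence $h=\exp X\in\exp(\h)\subset H^{0}$. Thus $V\cap H\subset H^{0}$ in this case, for any sufficiently small $V$.

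If $\dim H=0$, then $H$ is finite and $H^{0}=\{e\}$, and the local statement reduces to saying that the non-identity elements of $H$ stay bounded away from $e$, uniformly over all such $H$. Here I would invoke the classical fact that $\bG$ has only finitely many conjugacy classes of maximal proper algebraic subgroups (a Larsen--Pink type statement, in the circle of ideas of \cite{breuillardgreentao,pyberszabo}); in particular the orders of the finite ones are bounded by a constant $m_{0}=m_{0}(\bG)$. Then every non-identity $h\in H$ has finite order at most $m_{0}$, so in a fixed faithful representation of $\bG$ its eigenvalues are roots of unity of bounded order, not all equal to $1$ since $h\neq e$; consequently such an $h$ lies at distance at least some $\eta_{0}=\eta_{0}(\bG)>0$ from $e$.

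Choosing $V$ small enough to meet all the requirements above --- $\exp$ a diffeomorphism onto $V$, $V$ inside the neighbourhood of the lemma following Proposition~\ref{maximal}, and $V\subset B(e,\eta_{0})$ --- yields the local statement for all maximal proper algebraic subgroups simultaneously, and hence the lemma. The positive-dimensional case is the routine part, being essentially a direct consequence of Proposition~\ref{maximal} and the self-normalizing property of $\h$; I expect the only step requiring genuine input to be the uniform bound in the finite case, for which the finiteness of the number of conjugacy classes of maximal proper algebraic subgroups is the natural tool.
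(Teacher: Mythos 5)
Your proof is correct, and in the positive-dimensional case it is essentially identical to the paper's: identify $H$ with $N_{\bG}(\h)$ via maximality, observe that $\h$ is then self-normalizing (a point the paper leaves implicit but which you rightly spell out), and apply the lemma preceding Proposition~\ref{maximal} to conclude $X=\log h\in\h$. Your reduction of the distance identity $d(g,H)=d(g,H^{0})$ to the local containment $V\cap H\subset H^{0}$ by a three-radius triangle-inequality argument is also more explicit than the paper's single sentence \enquote{It suffices to show\dots}.

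Where you genuinely diverge is in the case $\dim H=0$. You invoke the finiteness of conjugacy classes of maximal proper algebraic subgroups of $\bG$ to bound the order of a finite maximal $H$; this works, but it is a comparatively deep classification result (Liebeck--Seitz and company, not really a Larsen--Pink statement as you suggest), and strictly more than is needed. The paper uses Jordan's theorem instead: every finite subgroup $H$ normalizes a torus $T$ with $[H:H\cap T]\leq C(G)$; if $H$ is maximal, $T$ must be trivial (else $H\subset N_{\bG}(T)\subsetneq\bG$ forces $H=N_{\bG}(T)$, which has positive dimension), so $|H|\leq C$, and then $U=\exp B(0,r/C)$ misses the non-identity elements of $H$ because any $h=e^{X}\in H\cap U$ satisfies $e^{|H|X}=1$ with $|H|X$ still in the injectivity ball. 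This is more elementary and self-contained, and gives the same explicit bound. You would do well to substitute Jordan's theorem for the classification in your write-up; the rest stands.
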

\begin{proof}
It suffices to show that there is a neighborhood $U$ of the identity in $G$ such that for any maximal proper algebraic subgroup $H$,
\begin{equation}\label{uh}
U\cap H = U\cap H^0.
\end{equation}
If the Lie algebra $\h$ of $H$ is nonzero, then by maximality, $H$ is equal to the normaliser of $\h$, so the previous lemma shows that we can find $U$ such that (\ref{uh}) holds for any positive dimensional maximal $H$. To deal with finite maximal subgroups, we use Jordan's Theorem: there is a constant $C$ depending on $G$ only such that if $H$ is a finite subgroup, there exists a torus $T$ in $G$ such that $H$ is included in the normalizer of $T$ and $[H:H\cap T]\leq C$. If $H$ is maximal, we must have $T=\{1\}$ and therefore $|H|\leq C$. Taking $U$ to be of the form $\exp B(0,\frac{r}{C})$ where $r$ is such that the exponential is one-one on $B(0,r)$, we indeed find $H\cap U=\{1\}$.
\end{proof}

In order to satisfy the desired \enquote{escape-from-subvariety} property, a set $A$ should not be too close to closed subgroups of $G$. That is what is quantified in the following definition.

\begin{definition}
Let $\frac{1}{2}>\rho>0$ be a parameter. We say that a subset $A$ of a connected Lie group $G$ is \emph{$\rho$-away from subgroups} if for any proper closed connected subgroup $H$, there exists an element $a$ in $A$ such that $d(a,H)\geq\rho$.
\end{definition}

We start by an elementary observation.

\begin{lemma}\label{dtuple}
Let $G$ be a simple Lie group. There exists a neighborhood $U$ of the identity in $G$ and a constant $C=C(G)\geq 0$ such that if $A\subset U$ is $\rho$-away from subgroups, then $A$ contains a subset of cardinality at most $d$ that is $\rho^C$-away from subgroups.
\end{lemma}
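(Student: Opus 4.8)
The plan is to build the required subset greedily, adding one element at a time, at each step escaping (via the hypothesis) the smallest subgroup still close to the elements chosen so far, and measuring progress by the dimension of that subgroup. Fix a neighbourhood $U$ of the identity small enough that $\log=\exp^{-1}$ is bi-Lipschitz on a neighbourhood of $\overline U$, that $\|\log a\|\le1$ for $a\in U$, and that Proposition~\ref{maximal} applies (shrinking $U$ further to meet the routine smallness conditions below). Let $\mathcal L$ be the set of Lie algebras of proper closed connected subgroups of $G$. Two elementary facts, valid for a constant $C_0=C_0(G)$, will be used repeatedly. First, if $H$ is the connected subgroup with $\Lie(H)=\h$ then $\exp(\h)\subset H$, so $d(a,H)\le C_0\,d(\log a,\h)$ for $a\in U$; since $A$ is $\rho$-away from subgroups, it follows that for every $\h\in\mathcal L$ there is $a\in A$ with $d(\log a,\h)\ge\rho/C_0$, and applying this to $\h=0$ produces $a_1\in A$ with $\|\log a_1\|\ge\rho/C_0$. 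Second, by Proposition~\ref{maximal}, for a maximal proper closed connected subgroup $H'$ one has $U\cap H'=\exp(O\cap\Lie(H'))$, hence $d(\log a,\Lie(H'))\le C_0\,d(a,H')$ for $a\in U$; together with the monotonicity $d(a,H)\ge d(a,H')$ valid whenever $H\subset H'$, this shows that any set whose image under $\log$ is $\eta$-away from $\Lie(H')$ for every maximal $H'$ is automatically $\eta/C_0$-away from all proper closed connected subgroups.

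Put $\beta=\rho/C_0$ and fix a threshold $t=\beta^K$ with $K$ a suitably large multiple of $d=\dim G$. Starting from $a_1$ as above, suppose $a_1,\dots,a_j\in A$ ($1\le j\le d$) have been chosen; set $x_i=\log a_i$ and $V_j=\Span(x_1,\dots,x_j)$. If the $\log$-image of $\{a_1,\dots,a_j\}$ is already $t$-away from every member of $\mathcal L$, stop. Otherwise choose $\h_j\in\mathcal L$ of minimal dimension with $d(x_i,\h_j)<t$ for all $i\le j$, and use the first fact (applied to the subgroup with Lie algebra $\h_j$) to pick $a_{j+1}\in A$ with $d(x_{j+1},\h_j)\ge\beta$.

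The crux is the inductive estimate $d(x_i,V_{i-1})\ge\beta/2$ for all $i$ (with $V_0=0$, so $i=1$ reads $\|x_1\|\ge\beta$). Granting it for indices $<i$, the volume of the parallelepiped on $x_1,\dots,x_{i-1}$ equals $\prod_{k<i}d(x_k,V_{k-1})\ge(\beta/2)^{i-1}$, so the matrix with these columns has least singular value $\gg\beta^{i-1}$, its largest being $\le\sqrt d$ as $\|x_k\|\le1$; since each $x_k$ lies within $t$ of $\h_{i-1}$ and $t/\beta^{i-1}$ is small, a standard perturbation bound for orthogonal projections puts $V_{i-1}$ within Grassmannian distance $O(t/\beta^{i-1})\le\beta/4$ of $\h_{i-1}$, whence $d(x_i,V_{i-1})\ge\beta-\beta/4\ge\beta/2$ because $x_i$ was chosen at distance $\ge\beta$ from $\h_{i-1}$. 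The same computation with $j$ in place of $i-1$ shows the $j$-dimensional $V_j$ lies within Grassmannian distance $O(t/\beta^{j})$ of $\h_j$, which (this being small) forces $\dim\h_j\ge j$. As members of $\mathcal L$ have dimension at most $d-1$, the process cannot complete step $j=d$, so it halts at some $j\le d$, yielding a subset of $A$ of cardinality at most $d$ whose $\log$-image is $t$-away from every member of $\mathcal L$, in particular from $\Lie(H')$ for every maximal proper closed connected $H'$. By the second fact this subset is $t/C_0$-away from subgroups, and since $t/C_0\ge\rho^{C}$ for a suitable constant $C=C(G)$, the lemma follows.

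The main obstacle is the transfer between the group and its Lie algebra: the easy inclusion $\exp(\h)\subset H$ suffices to run the escape steps, but to conclude one genuinely needs the \emph{uniform} description of maximal subgroups near the identity supplied by Proposition~\ref{maximal} — which is why one shows the output set escapes every maximal subgroup (as a subspace, through $\log$) and then recovers the general statement from the trivial inequality $d(a,H)\ge d(a,H')$ for $H\subset H'$. The remaining ingredients — multiplicativity of the parallelepiped volume and the projection perturbation bound — are routine linear algebra, the only delicate point being the bookkeeping of constants, handled by taking $K$ large.
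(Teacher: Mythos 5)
Your proof is correct and takes essentially the same approach as the paper: a greedy escape argument in which the key inductive estimate $d(x_i,V_{i-1})\geq\beta/2$, established via the conditioning bound of Lemma~\ref{bilipschitz} applied to the near-subalgebra $\h_{i-1}$, forces linear independence and hence termination in at most $d$ steps. The only differences are cosmetic — you phrase the escape and the stopping rule in the Lie algebra throughout and conclude via the dimension count $\dim\h_j\geq j$ rather than directly via independence of the $X_i$, and the \enquote{minimal dimension} choice of $\h_j$ is in fact never used.
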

\begin{proof}
Let $U$ be an exponential neighborhood of the identity, and denote as before $O=\log U$.
By Proposition~\ref{maximal}, we may assume that for any maximal proper closed connected subgroup $H$ of $G$, the intersection $H\cap U$ is equal to $\exp(\mathfrak{h}\cap O)$, where $\mathfrak{h}$ is the Lie algebra of $H$.
Suppose $A$ is included in $U$ and is $\rho$-away from subgroups. We define inductively the elements $a_i$ of the desired finite subset. First, choose $a_1$ in $A$ such that $d(a_1,0)\geq \rho$. Now assume the $a_i$'s, $1\leq i\leq k$, are defined. If $\{a_i\}_{1\leq i\leq k}$ is $\rho^C$-away from subgroups, we are done, and we do not need to define $a_{k+1}$. Otherwise, there exists a maximal closed subgroup $H_k$ such that for each $i\leq k$,
$$d(a_i,H_k)\leq\rho^C.$$ 
Using the fact that $A$ is $\rho$-away from subgroups, we pick in $A$ an element $a_{k+1}$ such that $d(a_{k+1},H_k)\geq\rho$. We just have to check that this procedure stops for some $k\leq d$. For that, we write $X_i=\log a_i$ and $\mathfrak{h}_k=\Lie H_k$, so $d(X_i,\mathfrak{h}_k)\leq\rho^C$. We will show that at each stage, the family $(X_i)_{1\leq i\leq k}$ is linearly independent (this forces in particular $k\leq d$).\\
Let $V_k=\Span(X_i)_{1\leq i\leq k}$. By induction on $k\geq 0$, we check that, $d(X_{k+1},V_k)\geq \rho/2$. Assume the result holds for all $j$ in $\{0,\dots,k\}$. In particular, any element $X$ of $O\cap V_k$ can be written $X=\sum_{i\leq k} \lambda_i X_i$ with $|\lambda_i|\leq \rho^{-C_0}$, for some constant $C_0=C_0(d)$, see Lemma~\ref{bilipschitz}. Therefore, any element of $O\cap V_k$ is $d\rho^{C-C_0}$-close to $\mathfrak{h}_k$, and thus away from $X_{k+1}$ by at least $\frac{\rho}{2}$, provided we have chosen $C>C_0+2$.
\end{proof}

In order to prove the escape property, the strategy will be to linearize the variety in some finite dimensional linear representation of $G$. But first, we show that given a representation of $G$ on a finite dimensional space $V$, if $A$ is away from subgroups, then no linear subspace of $V$ can be fixed under all elements of $A$.

\begin{definition}
Let $V$ be a finite-dimensional Hilbert space. Given $W$ and $W'$ two subspaces, we define the distance from $W$ to $W'$ by
$$d(W,W')=\max\{ d(u,W')\,;\, u\in W\ \mbox{and}\ \|u\|=1\}.$$
\end{definition}

Note that $d(W,W')=0$ if and only if $W$ is contained in $W'$. In the case $W$ and $W'$ have the same dimension $l$, we have $d(W,W')=d(W',W)$ and therefore $d$ is a distance on the Grassmannian variety of subspaces of dimension $\ell$.

\begin{proposition}\label{almoststab}
Let $G$ be a simple Lie group with trivial center and $V$ be a finite-dimensional complex representation of $G$. There exists a neighborhood $U$ of the identity in $G$ such that the following holds.\\
Given $c>0$, there exist constants $C\geq 0$ and $\rho_0>0$ depending only on $V$ and $c$, such that the following holds for any $\rho\in(0,\rho_0)$.\\
Suppose $A\subset U$ is $\rho$-away from subgroups. If $W$ is a subspace of $V$ such that, for some $x\in U$, $d(x\cdot W,W)\geq c$ then there exists an element $a$ in $A$ such that
$$d(a\cdot W,W)\geq \rho^C.$$
\end{proposition}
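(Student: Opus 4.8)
The plan is to argue by contradiction: assume every $a\in A$ almost stabilizes $W$, and deduce that $A$ must lie close to a proper closed connected subgroup of $G$, contradicting the hypothesis that $A$ is $\rho$-away from subgroups. The candidate subgroup is the stabilizer $\Stab(W)=\{g\in G\,:\,g\cdot W=W\}$. Since $G$ has trivial centre, it is the identity component of the real points of a simple linear algebraic group, so $\Stab(W)$ is an algebraic subgroup; and the existence of $x\in U$ with $d(x\cdot W,W)\geq c$ forces $W$ not to be $G$-invariant, hence $\Stab(W)\neq G$. Being a proper algebraic subgroup, $\Stab(W)$ is contained in some maximal proper algebraic subgroup $H$, and after shrinking $U$ we may apply Lemma~\ref{neutre}, which lets us pass from $H$ to its identity component $H^0$ -- a proper closed connected subgroup.

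\textbf{Setup and linearization.} I would first fix an embedding $G\subset GL(V)$ and a norm on $\End(V)$, giving a ``linear'' distance on $G$ that is bi-Lipschitz to the Riemannian distance on the (compact) closure $\overline U$, for $U$ chosen small enough that Lemma~\ref{neutre} applies. Then I linearize the Grassmannian by the Plücker embedding: if $k=\dim W$, the subspace $W$ corresponds to the line $\bigwedge^k W\subset\bigwedge^k V$, the $G$-action factors through $\bigwedge^k V$, and for $g\in\overline U$ the distance $d(g\cdot W,W)$ is comparable to the norm of the component of $\bigl(\bigwedge^k g\bigr)(\bigwedge^k W)$ orthogonal to $\bigwedge^k W$ -- a polynomial expression in the entries of $g$ that vanishes exactly on $\Stab(W)$.

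\textbf{The key inequality.} The heart of the argument is a uniform {\L}ojasiewicz-type inequality: there are constants $N=N(V,c)\geq 1$ and $C_1=C_1(V,c)$ with
$$d\bigl(g,\Stab(W')\bigr)^N\;\leq\; C_1\, d(g\cdot W',W')\qquad\text{for all }g\in\overline U$$
and all $W'$ in the set $K_c=\{W'\in\mathrm{Gr}_k(V)\,:\,\exists\, x\in\overline U,\ d(x\cdot W',W')\geq c\}$, which is a compact (semialgebraic) subset of the Grassmannian. This follows from the {\L}ojasiewicz inequality for continuous semialgebraic functions on a compact semialgebraic set, applied on $K_c\times(\overline U\cap G)$ to the functions $(W',g)\mapsto d(g,\Stab(W'))$ and $(W',g)\mapsto d(g\cdot W',W')$, whose zero sets are nested because $d(g\cdot W',W')=0$ exactly when $g\in\Stab(W')$; restricting to $K_c$ is precisely what prevents the constants from degenerating as $W'$ approaches a $G$-invariant subspace. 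This uniformity over the whole Grassmannian is the step I expect to be the main obstacle.

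\textbf{Conclusion.} Granting the inequality, suppose $d(a\cdot W,W)<\rho^{C}$ for every $a\in A$, where $C$ is to be chosen. Since $W\in K_c$, we get $d(a,\Stab(W))\ll\rho^{C/N}$, hence $d(a,H)\ll\rho^{C/N}$, hence $d(a,H^0)\ll\rho^{C/N}$ by Lemma~\ref{neutre} (passing back to the Riemannian distance via the bi-Lipschitz comparison on $\overline U$). Choosing $C=C(V,c)>N$ and then $\rho_0=\rho_0(V,c)$ small enough, we obtain $d(a,H^0)<\rho$ for every $a\in A$ whenever $\rho<\rho_0$, with $H^0$ a proper closed connected subgroup of $G$ -- contradicting that $A$ is $\rho$-away from subgroups. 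This contradiction yields the desired element $a\in A$.
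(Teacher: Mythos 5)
Your strategy by contradiction, the role of Lemma~\ref{neutre}, and the appeal to \L ojasiewicz-type inequalities are all present in the paper. But the specific inequality you isolate as the heart of the argument,
$$d\bigl(g,\Stab(W')\bigr)^{N}\ \leq\ C_{1}\,d(g\cdot W',W')\qquad\text{uniformly for }g\in\overline U,\ W'\in K_{c},$$
is in fact \emph{false}, and you correctly guessed this was the danger point. The problem is semicontinuity of stabilizers: as $W'$ varies, $\dim\Stab(W')$ can jump \emph{up} at a special subspace $W_{0}$, in which case $W'$ close to $W_{0}$ has $\Stab(W')$ much smaller than $\Stab(W_{0})$. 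Take $g\in\Stab(W_{0})\setminus\Stab(W')$ close to the identity: then $d(g\cdot W',W')$ is small (because $W'\approx W_{0}$ and $g$ fixes $W_{0}$) while $d(g,\Stab(W'))$ stays bounded away from zero. Concretely, for $G=SO(3)$ and $V=\C^{3}\oplus\C^{3}$ (two copies of the standard representation), set $W_{0}=\C(e_{1},e_{1})$ and $W'_{\epsilon}=\C(e_{1},e_{1}+\epsilon e_{2})$: one has $\Stab(W_{0})=O(2)$, but $\Stab(W'_{\epsilon})=\{I,\,\mathrm{diag}(-1,-1,1)\}$ for $\epsilon\neq 0$, and both $W_{0}$ and $W'_{\epsilon}$ lie in a common $K_{c}$. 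For a small nontrivial rotation $g$ about the $e_{1}$-axis, $d(g\cdot W'_{\epsilon},W'_{\epsilon})\to 0$ as $\epsilon\to 0$ while $d(g,\Stab(W'_{\epsilon}))$ is a fixed positive number. So no uniform constant $C_{1}$ exists, even on the compact set $K_{c}$ (semialgebraicity does not save you: the relevant function $(W',g)\mapsto d(g,\Stab(W'))$ is not continuous, precisely because of this jumping).

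The paper's fix is structurally different in two ways, and both are essential. First, the \L ojasiewicz inequality is applied not to a single group element but to the map $f(\bar g,\xi)=\sum_{i=1}^{d}d(g_{i}\cdot\xi,\xi)^{2}$ on the \emph{product} space $G^{d}\times\mathcal G_{\ell}$, and the comparison is between $f(\bar g,W)$ and the distance $d((\bar g,W),Z)$ to the zero set $Z=\{(\bar g_{0},W_{0}):\forall i,\ g_{0,i}W_{0}=W_{0}\}$. Crucially, the nearest point $(\bar g_{0},W_{0})\in Z$ may have $W_{0}\neq W$, and the relevant proper subgroup is $\Stab(W_{0})$, not $\Stab(W)$ (in the counterexample above, $W_{0}$ is exactly that nearby subspace with the larger stabilizer). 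This is why a comparison pinned to $\Stab(W)$ for the same $W$ must fail. Second, the reduction to a $d$-tuple via Lemma~\ref{dtuple} matters: because the approximation takes place jointly in $G^{d}\times\mathcal G_{\ell}$, a \emph{single} $W_{0}$ and hence a \emph{single} proper subgroup $\Stab(W_{0})$ works simultaneously for all $d$ coordinates $g_{1},\dots,g_{d}$ — if you argued one $a\in A$ at a time, different $a$'s could end up near different proper subgroups and you could not conclude that $A$ is close to a single one. To repair your proof you would need to replace the inequality above with the product-space version and import the reduction to a $d$-tuple, at which point you have essentially reproduced the paper's argument.
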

\begin{proof}
As $V$ is finite-dimensional, we may assume that the dimension of the subspace $W$ is fixed, equal to $\ell$. The action of $G$ on $V$ is algebraic and hence, so is the induced action of $G$ on the Grassmannian $\G_\ell$ of $\ell$-dimensional subspaces of $V$. Let $U$ be a compact neighborhood of the identity in $G$.\\
The map
$$
\begin{array}{cccc}
f: & G^d\times \G_\ell & \rightarrow & \R\\
& (\bar{g},\xi) & \mapsto & \sum_{i=1}^d d(g_i\cdot \xi,\xi)^2
\end{array}
$$
is real-analytic, so that we may apply the \L ojasiewicz inequality \cite[Théorème~2, page 62]{lojasiewicz} (in some local analytic charts),
and get that for some constant $C\geq 0$, for all $(\bar{g},W)\in U^d\times\G_\ell$,
$$\sum_{i=1}^d d(g_i\cdot W, W)^2 \geq \frac{d( (\bar{g},W), Z)^C}{C},$$
where $Z$ is the zero set of $f$:
$$Z=\{(\bar{g},W)\in G^d\times\G_\ell\,|\,\forall i,\  g_i\cdot W=W\}.$$
Now choose $U$ as in Lemma~\ref{neutre}. We claim that if for any proper closed subgroup $H$, the $d$-tuple $\bar{g}$ has a coordinate whose distance to $H$ is bounded below by $\rho$, and if for some $x$ in $U$, $d(x\cdot W,W)\geq c$, then $d( (\bar{g},W),Z)\geq \rho$. Indeed, assume by contrapositive that $d( (\bar{g},W),Z)\leq \rho$. Then there exists $(\bar{g}_0,W_0)$ such that
$$d(\bar{g},\bar{g}_0)\leq \rho, \quad d(W,W_0)\leq \rho,\quad \mbox{and}\quad \forall i,\, g_{0,i}\cdot W_0=W_0.$$
This implies in particular that, for each $i$, $d(g_i,\Stab W_0)\leq \rho$. If $\Stab W_0\neq G$, we can choose a proper maximal algebraic subgroup $H$ containing $\Stab W_0$, and then have, for each $i$,
$$d(g_i,H^0)=d(g_i,H)\leq d(g_i,\Stab W_0)\leq \rho.$$
So we just have to show that $\Stab W_0\neq G$. For this, recall that for some $x\in U$, we have
$$d(x\cdot W,W)\geq c.$$
As $U$ is compact, there is a constant $C_0$ such that all elements of $U$ are $C_0$-Lipschitz, as transformations of $\G_\ell$; in particular,
$$d(x\cdot W_0,W_0)\geq c - 2C_0\rho>0,$$
provided $\rho$ is small enough (depending on $c$), so that $\Stab W_0\neq G$.
This proves the proposition in the case $A$ has finite cardinality at most $d$.\\
By Lemma~\ref{dtuple}, the general case follows from this.
\end{proof}

From the previous lemma, we may now obtain by induction the following quantitative escape property. 

\begin{proposition}[Escape from subvarieties]\label{escape}
Let $G$ be a simple Lie group with trivial center and $V$ be a finite dimensional complex representation of $G$. Fix a neighborhood $U$ of the identity in $G$ for which Proposition~\ref{almoststab} holds.\\
Given $c>0$ there exist constants $C\geq 0$ and $\rho_0>0$ depending only on $V$ and $c$ such that the following holds for any $\rho\in(0,\rho_0)$.\\
Assume that $A\subset U$ is $\rho$-away from subgroups, and that $1\in A$.
Let $v$ be a unit vector in $V$ and $W<V$ a linear subspace of dimension $\ell$ such that for some $x\in U$, $d(x\cdot v,W)\geq c$.\\
Then there exists an element $a\in A^\ell$ such that $d(a\cdot v,W)\geq \rho^C$.
\end{proposition}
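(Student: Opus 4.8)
The proof goes by induction on $\ell=\dim W$, the case $\ell=0$ being trivial since then $W=\{0\}$ and $d(1\cdot v,W)=\|v\|=1\geq\rho^C$. For the inductive step I would first dispose of the easy case: if $d(v,W)\geq\rho^{C}$ for the (large) constant $C$ that will eventually be produced, then $a=1\in A^{0}\subseteq A^{\ell}$ already works. So one may assume $v$ is extremely close to $W$; replacing $v$ by the normalized orthogonal projection of $v$ onto $W$ — which moves $v$ by less than $\rho^{C}$ and, by the uniform Lipschitz bound on the (compact) family $U$ acting on $V$, preserves the hypothesis $d(x\cdot v,W)\geq c$ with $c$ replaced by $c/2$ — one reduces to the case $v\in W$. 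In particular $W$ is \emph{not} $G$-invariant (otherwise $x\cdot v\in W$). Using Lemma~\ref{dtuple} one may also replace $A$ by a subset of cardinality at most $d$ containing $1$ that is still $\rho^{C'}$-away from subgroups.

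The heart of the argument is an ``escape through the orbit span'' construction. Consider the increasing chain
$\mathcal W_{0}=\C v\subseteq \mathcal W_{1}=\sum_{g\in A}g\cdot\mathcal W_{0}\subseteq\mathcal W_{2}=\sum_{g\in A}g\cdot\mathcal W_{1}\subseteq\cdots$,
which stabilizes at the $G$-submodule spanned by the orbit of $v$. The crucial observation is purely dimensional: as long as $\mathcal W_{j}$ is $\rho^{C}$-close to $W$, the dimension must strictly increase at the next step. Indeed, if $\mathcal W_{j+1}=\mathcal W_{j}$ then $\mathcal W_{j}$ is $A$-invariant, hence $A\subseteq\Stab\mathcal W_{j}$; since $A$ is away from subgroups, Lemma~\ref{neutre} (reducing a maximal overgroup of $\Stab\mathcal W_{j}$ to its identity component) forces $\Stab\mathcal W_{j}=G$, so $\mathcal W_{j}$ is $G$-invariant; but then $x\cdot v\in\mathcal W_{j}$ and $d(x\cdot v,W)\leq\|x\cdot v\|\,d(\mathcal W_{j},W)$ is tiny, contradicting $d(x\cdot v,W)\geq c/2$. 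The same reasoning gives $\dim\mathcal W_{1}\geq 2$ (otherwise $\C v$ would be $G$-invariant). Since any subspace that is $\tfrac12$-close to $W$ has dimension at most $\ell$, the chain is forced to escape: there is $k\leq\ell$ with $d(\mathcal W_{k},W)\geq\rho^{C}$. No transversality estimate is needed here, precisely because dimensions are integers.

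It remains to turn ``$d(\mathcal W_{k},W)\geq\rho^{C}$'' into a statement about one orbit vector: since $\mathcal W_{k}$ is spanned by the vectors $g_{k}\cdots g_{1}\cdot v$ with $g_{i}\in A$, one wants a single such vector at distance $\geq\rho^{C''}$ from $W$, and then $a=g_{k}\cdots g_{1}\in A^{k}\subseteq A^{\ell}$ (using $1\in A$ and $k\leq\ell$) is the required element. This extraction is the step I expect to be the main obstacle: a span of vectors each close to $W$ need not be close to $W$, so one must control the condition number of a basis of $\mathcal W_{k}$ chosen among the orbit vectors. This is where the quantitative content of Proposition~\ref{almoststab} should be used, together with a bi-Lipschitz estimate of the type of Lemma~\ref{bilipschitz} and again the away-from-subgroups hypothesis: roughly, because some element of $A$ must move any non-$G$-fixed subspace $\mathcal W_{j}$ by at least a power of $\rho$ (while $U$ is unable to), the orbit of $v$ cannot span $\mathcal W_{k}$ too degenerately, so a unit vector of $\mathcal W_{k}$ can be written as a combination of at most $d^{\ell}$ orbit vectors with coefficients bounded by $\rho^{-O(1)}$, and ``all orbit vectors close to $W$'' would then force $\mathcal W_{k}$ close to $W$, a contradiction.

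Finally I would keep track of how the exponent degrades at each of these steps and through the $\ell\leq\dim V$ levels of the induction, and check that the resulting constants $C$ and $\rho_{0}$ depend only on the representation $V$ and on $c$ — which is immediate once one notes that all the estimates above (the Lipschitz bound for $U$, the Łojasiewicz constants coming through Proposition~\ref{almoststab} and Lemma~\ref{neutre}, and the bi-Lipschitz constant) depend only on $V$, and the bookkeeping composes a bounded number of them.
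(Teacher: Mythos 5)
Your approach is genuinely different from the paper's, and it has a real gap in the ``extraction'' step that you identify as the main obstacle. The paper's induction goes \emph{downward}: it uses Proposition~\ref{almoststab} to find $a\in A$ with $d(a^{-1}W,W)\geq\rho^{C_0}$, then Lemma~\ref{anglealpha} to produce a \emph{proper} subspace $W_0<W$ capturing the thin intersection $W^{(r)}\cap a^{-1}W^{(r)}$, applies the inductive hypothesis to $W_0$, and finishes with a short alternation: either $a_\ell v$ already escapes $W$, or $a_\ell w$ is forced into the thin tube around $W_0$, contradicting (\ref{close}). Crucially, because $W_0\subset W$, the hypothesis ``some $x\in U$ moves the target subspace by $\geq c/2$'' is \emph{inherited} down the induction for free: $d(x\cdot w,W_0)\geq d(x\cdot w,W)\geq c/2$. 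Your induction goes \emph{upward}, building $\mathcal W_0\subset\mathcal W_1\subset\cdots$, and the dimensional escape argument (strict growth unless stabilized, stabilization forcing $G$-invariance via Lemma~\ref{neutre}, hence forcing $d(\mathcal W_k,W)$ to be of order $c$) is correct as a qualitative statement.

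The gap is exactly where you flag it, and I do not think your sketch closes it. Knowing $d(\mathcal W_k,W)\gg 1$ where $\mathcal W_k=\Span(A^k\cdot v)$ does not give a single orbit vector far from $W$: a span of vectors all $\rho^{C}$-close to $W$ can be far from $W$ if the spanning family is nearly degenerate (e.g.\ $v_1=e_1$, $v_2=e_1+\rho^{C}e_2$, $W=\C e_1$), so one does need a condition-number bound. Your plan for obtaining it is to invoke Proposition~\ref{almoststab} on the intermediate $\mathcal W_j$'s (``some element of $A$ must move any non-$G$-fixed $\mathcal W_j$ by a power of $\rho$''). But Proposition~\ref{almoststab} requires the hypothesis that some $x\in U$ already moves $\mathcal W_j$ by a \emph{bounded} amount $c$, and this is not available for the $\mathcal W_j$'s you build. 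In a reducible $V$ with multiplicities (which the proposition does allow), an intermediate $\mathcal W_j$ can sit arbitrarily close to a positive-dimensional family of $G$-invariant subspaces, so that $\sup_{x\in U}d(x\cdot\mathcal W_j,\mathcal W_j)$ is tiny and Proposition~\ref{almoststab} gives nothing; yet the chain still strictly grows (since $\mathcal W_j$ is not \emph{exactly} invariant), so your dimensional escape does not detect this. This is the precise point where the upward induction loses the inherited-hypothesis advantage that makes the paper's downward induction work, and where something like the paper's $W\cap a^{-1}W\subset W_0^{(3r\rho^{-C_0})}$ estimate from Lemma~\ref{anglealpha} is doing real work that your scheme does not replicate.

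Two smaller points. First, Lemma~\ref{neutre} covers maximal \emph{algebraic} subgroups, and $\Stab\mathcal W_j$ need not be contained in a proper closed \emph{connected} subgroup directly; the reduction you gesture at is the right one but should be spelled out (it is exactly why Lemma~\ref{neutre} is stated and proved). Second, after replacing $v$ by the normalized projection onto $W$ you must, at the end, convert the conclusion back to the original $v$; this is harmless bookkeeping but should be done, since the statement is about $d(a\cdot v,W)$ for the given $v$.
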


If $X$ is a subspace of a metric space and $\rho$ any positive number, $X^{(\rho)}$ denotes the $\rho$-neighborhood of $X$, i.e. the set of points whose distance to $X$ is less than $\rho$. The proof of Proposition~\ref{escape} will use the following simple observation.

\begin{lemma}\label{anglealpha}
Let $V$ be a finite dimensional Hilbert space. Let $W_1$ and $W_2$ be two different subspaces of $V$ of the same dimension, and denote $\alpha=d(W_1,W_2)$. Then there exists $W_0$ a proper subspace of $W_1$ such that for all $r\in (0,1)$,
$$W_1^{(r)}\cap W_2^{(r)} \subset W_0^{(\frac{3r}{\alpha})}.$$
\end{lemma}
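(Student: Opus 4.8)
The plan is to reduce the claim to a purely linear statement about the two fixed subspaces $W_1,W_2$, and to produce $W_0$ from the singular value (principal angle) decomposition of the pair $(W_1,W_2)$. Concretely, given $x\in W_1^{(r)}\cap W_2^{(r)}$, I would first project onto $W_1$: set $y=P_{W_1}x$, so $\|x-y\|=d(x,W_1)<r$, and since $P_{W_2}x\in W_2$ while $P_{W_2}y$ is the closest point of $W_2$ to $y$, one gets $\|y-P_{W_2}y\|\le\|y-P_{W_2}x\|\le\|y-x\|+\|x-P_{W_2}x\|<2r$. Thus it suffices to find a proper subspace $W_0<W_1$ with the property that every $y\in W_1$ satisfies $d(y,W_0)\le \alpha^{-1}\|y-P_{W_2}y\|$; then $d(x,W_0)\le\|x-y\|+d(y,W_0)<r+2r/\alpha\le 3r/\alpha$, using that $\alpha=d(W_1,W_2)\le 1$.

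To construct $W_0$, I would invoke the standard CS/principal angle decomposition: choose orthonormal bases $(e_i)_{1\le i\le\ell}$ of $W_1$ and $(f_i)$ of $W_2$ such that $\langle e_i,f_j\rangle=\cos\theta_i\,\delta_{ij}$ with $0\le\theta_1\le\dots\le\theta_\ell\le\pi/2$. For a unit vector $u=\sum a_i e_i\in W_1$ one computes $d(u,W_2)^2=\|u-P_{W_2}u\|^2=\sum_i a_i^2\sin^2\theta_i$, whence $\alpha=d(W_1,W_2)=\sin\theta_\ell$. I would then take
$$W_0=\Span\{e_i:\sin\theta_i<\alpha\},$$
which is a \emph{proper} subspace of $W_1$ because $e_\ell\notin W_0$ (here I use that the dimensions are equal and $W_1\neq W_2$, so $\alpha>0$ and $W_1\neq\{0\}$).

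Finally I would verify the required bound on $W_0$: for any $y=\sum a_i e_i\in W_1$, writing $y=y_0+y_1$ with $y_0\in W_0$ and $y_1=\sum_{\sin\theta_i=\alpha}a_ie_i$, we have $d(y,W_0)\le\|y_1\|$ and
$$\|y_1\|^2=\sum_{\sin\theta_i=\alpha}a_i^2=\frac{1}{\alpha^2}\sum_{\sin\theta_i=\alpha}a_i^2\sin^2\theta_i\le\frac{1}{\alpha^2}\sum_i a_i^2\sin^2\theta_i=\frac{1}{\alpha^2}\|y-P_{W_2}y\|^2,$$
which is exactly what was needed. Combining with the first paragraph gives $W_1^{(r)}\cap W_2^{(r)}\subset W_0^{(3r/\alpha)}$ for all $r\in(0,1)$. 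I do not expect a genuine obstacle here: the only real choice is identifying the correct $W_0$, and once one thinks of it as the span of the principal vectors attached to the non-maximal angles (equivalently, the spectral subspace of the operator $P_{W_1}(I-P_{W_2})P_{W_1}$ on $W_1$ for eigenvalues strictly below $\alpha^2$), the rest is the routine estimate above; some care is only needed to keep the constant at $3$ rather than a larger value, which is why the reduction is arranged so that the term $r$ gets absorbed into $2r/\alpha$.
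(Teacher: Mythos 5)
Your proof is correct, and its first paragraph (project $x$ to $y=P_{W_1}x$ and observe $\|y-P_{W_2}y\|<2r$, then absorb the extra $r$ using $\alpha\le 1$) is exactly the same reduction the paper uses. Where you diverge is in constructing $W_0$: the paper picks a single unit vector $u\in W_1$ realizing $d(u,W_2)=\alpha$, sets $f=p_{W_2^\perp}(u)/\|p_{W_2^\perp}(u)\|$, notes $W_2^{(r)}\subset\{v:|(f,v)|\le r\}$ and $\|f|_{W_1}\|\ge\alpha$, and takes $W_0=W_1\cap\ker f$ — a hyperplane in $W_1$, produced from one functional with no structure theory. You instead invoke the full CS/principal-angle decomposition and take $W_0=\Span\{e_i:\sin\theta_i<\alpha\}$, the spectral subspace of $P_{W_1}(I-P_{W_2})P_{W_1}$ below $\alpha^2$. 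Both give the constant $3$; your route is heavier machinery but returns a potentially smaller $W_0$ (codimension equal to the multiplicity of the maximal principal angle, rather than always $1$), whereas the paper's argument is more elementary and is all that the statement requires since it only asks for \emph{some} proper subspace.
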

\begin{proof}
Let $u$ be a unit vector in $W_1$ such that $d(u,W_2)=\alpha$, and define
$$f=\frac{p_{W_2^\perp}(u)}{\|p_{W_2^\perp}(u)\|},$$
where $p_{W_2^\perp}$ is the orthogonal projection onto $W_2^\perp$. From $\|f\|=1$ and $f^\perp \supset W_2$, on gets, for any $r$,
$$W_2^{(r)} \subset \{v\,|\, |(f,v)|\leq r\}.$$
On the other hand, $|(f,u)|=\alpha$, so that, viewing $f$ as a linear form, we have $\|f|_{W_1}\|\geq\alpha$. We let $W_0=\ker f|_{W_1}=W_1\cap \ker f$. If $v$ is in $W_1\cap W_2^{(r)}$, we have
$$d(v,W_0) = \frac{|(f,v)|}{\|f|_{W_1}\|} \leq \frac{r}{\alpha}.$$
This shows that $W_1\cap W_2^{(r)} \subset W_0^{(\frac{r}{\alpha})}$. Noting that $W_1^{(r)}\cap W_2^{(r)}$ is included in a neighborhood of size $r$ of $W_1\cap W_2^{(2r)}$, this proves the lemma.
\end{proof}

\begin{proof}[Proof of Proposition~\ref{escape}]
We prove the proposition by induction on the dimension $\ell$ of $W$.\\
\underline{$\ell=0$}\\
The result is clear, since $1\in A$ and $d(v,\{0\})=1\geq\rho$.\\
\underline{$\ell\rightarrow \ell+1$}\\
Suppose we have found a constant $C_\ell$ depending only on $U$, $V$ and $c$ such that the proposition holds for any subspace $W$ of dimension less than or equal to $\ell$.\\
Let $L\geq 1$ be a constant such that all elements of $U$ are $L$-Lipschitz, as diffeomorphisms of $V$.
As $1\in A$, we may assume without loss of generality that $d(v,W)\leq \frac{c}{2L}$, so that choosing $w\in W$ such that $d(v,w)\leq\frac{c}{2L}$, we find that for some $x\in U$,
$$d(x\cdot w,W)\geq d(x\cdot v,W)-Ld(v,w)\geq \frac{c}{2}$$
which implies
$$d(x\cdot W,W)\geq \frac{c}{2}.$$
Therefore, from Proposition~\ref{almoststab}, we may find $a\in A$ such that $d(a^{-1}\cdot W,W)\geq \rho^{C_0}.$ By Lemma~\ref{anglealpha}, this implies
 that for some proper subspace $W_0<W$, for all $r>0$, the intersection $W^{(r)}\cap a^{-1}\cdot  W^{(r)}$ lies in $W_0^{(3r\rho^{-C_0})}$.\\
We will prove the proposition with  constant $C=C_0+C_\ell+1$.\\
Of course, if $d(v,W)\geq \rho^{C}$, there is nothing to prove, so we assume $d(v,W)\leq \rho^{C}$ and choose $w\in W$ such that
$$d(v,w)\leq\rho^{C}.$$
From the induction hypothesis applied to $W_0$, there exists an $a_\ell\in A^\ell$ such that
\begin{equation}\label{close}
d(a_\ell\cdot w,W_0)\geq \rho^{C_\ell}.
\end{equation}
If $d(a_\ell\cdot v, W)\geq \rho^{C}$ we are done.\\
Otherwise, we must have $d(a_\ell\cdot w,W)\leq (L^\ell+1)\rho^C$. Suppose for a contradiction that $d(a\cdot (a_\ell w),W)\leq (L^{\ell+1}+1)\rho^{C}$; then $d(a_\ell w,a^{-1}W)\leq L(L^{\ell+1}+1)\rho^C$ and so $a_\ell w \in W^{((L^\ell+1)\rho^C)}\cap a^{-1}W^{(L(L^{\ell+1}+1)\rho^C)}$, which implies, by definition of $W_0$, for $\rho>0$ small enough,
$$d(a_\ell w, W_0) \leq 3L(L^{\ell+1}+1)\rho^{C-C_0} < \rho^{C_\ell},$$
contradicting (\ref{close}).
Thus, we find,
$$d(aa_\ell\cdot v,W)\geq d(aa_\ell\cdot w,W)-L^{\ell+1}d(v,w)\geq (L^{\ell+1}+1)\rho^{C}-L^{\ell+1}\rho^C \geq \rho^C.$$
\end{proof}

\begin{remark}\label{irred}
One can check that the map $\varphi:(v,W)\mapsto \max_{g\in U} d(gv,W)$ is continuous,
 and if $V$ is an irreducible representation, it is also everywhere positive. Using compacity of the unit sphere in $V$ and of the Grassmannian variety of hyperplanes, this shows that there exists a small constant $c>0$ depending only on $U$ and $V$ such that for any unit vector $v\in V$, and any subspace $W<V$, there exists $x\in U$ such that $d(x\cdot v,W)\geq c$. So, in the case where $V$ is irreducible, the proposition holds without any assumption on $v$ and $W$.
\end{remark}

\subsection{Larsen-Pink type estimates}

The purpose of this section is to derive some metric analogs of the Larsen-Pink type inequalities, as developped by Breuillard, Green and Tao \cite{breuillardgreentao}, and by Pyber and Szab\'o \cite{pyberszabo}. Because we needed to take into account the metric of the ambient space, it seemed more natural to work with differential submanifolds, rather than algebraic subvarieties. Thus, we first define a notion of complexity in this setting, and then prove the needed Larsen-Pink estimates.

\paragraph{}

As before, the letter $C$ denotes a large positive constant, whose value may increase from one line to the other, but depending only on the ambient dimension $d$ or on the ambient group $G$. We will say that a map $f$ between two metric spaces $E$ and $F$ is $K$-Lipschitz if it satisfies, for all $x$ and $y$ in $E$, $d(f(x),f(y))\leq K d(x,y)$. If $f$ is bijective, we say that it is $K$-bi-Lipschitz if both $f$ and $f^{-1}$ are $K$-Lipschitz.

\subsubsection{Complexity of submanifolds of $\R^d$}

We start by defining complexity for diffeomorphisms defined on an open ball of the Euclidean space $\R^d$.

\begin{definition}
Let $\frac{1}{2}>\rho>0$ be a parameter.
A \emph{diffeomorphism of complexity} $\rho^{-1}$ is a map $f$ from $B_\rho:=B_{\R^d}(0,\rho)$ to $\R^d$ satisfying the following properties:
\begin{itemize}
\item $f(0)=0$
\item $f$ is a diffeomorphism of $B_\rho$ onto its image
\item $f'(0):\R^d\rightarrow\R^d$ is $\rho^{-1}$-bi-Lipschitz
\item the differential of $f$, $f':B_\rho\rightarrow \End\R^d$ is $\rho^{-1}$-Lipschitz.
\end{itemize}
\end{definition}

The first thing we want to check is that inverses and compositions of diffeomorphisms of bounded complexity remain of bounded complexity. This will be a straightforward application of the following quantitative version of the Inverse Function Theorem.

\begin{theorem}[Quantitative Inverse Function Theorem]\label{quantitative}
There exists an absolute constant $C$ ($C=3$) such that the following holds for any $\frac{1}{2}>\rho>0$. Let $f$ be a $C^1$ map from $\mathbb{R}^d$ to $\mathbb{R}^d$ satisfying:
\begin{enumerate}
\item The map $f'(0):\mathbb{R}^d\rightarrow\mathbb{R}^d$ is $\rho^{-1}$-bi-Lipschitz,
\item The map $f':\mathbb{R}^d\rightarrow\End\mathbb{R}^d$ is $\rho^{-1}$-Lipschitz,
\end{enumerate}
then, $f$ induces a $\rho^{-C}$-bi-Lipschitz $C^1$-diffeomorphism from $B_{\rho^C}$ onto its image.
\end{theorem}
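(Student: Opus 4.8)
The plan is to run the standard Banach-fixed-point proof of the inverse function theorem while carefully tracking all constants in terms of $\rho$. Write $T = f'(0)$, so $T$ is $\rho^{-1}$-bi-Lipschitz, meaning $\rho\|v\| \le \|Tv\| \le \rho^{-1}\|v\|$ for all $v$, and in particular $\|T^{-1}\| \le \rho^{-1}$. The first step is to control how far $f$ deviates from its linearization: since $f'$ is $\rho^{-1}$-Lipschitz, for $x$ in the ball $B_r$ we have $\|f'(x) - T\| \le \rho^{-1} r$, and integrating along the segment from $0$ to $x$ gives $\|f(x) - Tx\| \le \rho^{-1} r \|x\|$. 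Consequently the map $g(x) := x - T^{-1}f(x)$ has $g(0)=0$ and $\|g'(x)\| = \|I - T^{-1}f'(x)\| = \|T^{-1}(T - f'(x))\| \le \rho^{-1}\cdot\rho^{-1}r = \rho^{-2}r$ on $B_r$. Choosing the radius $r = \frac{1}{2}\rho^2$ makes $g$ a contraction with constant $\le \tfrac12$ on $B_r$; note $r \le \rho^C$ with $C=3$ since $\rho < \tfrac12$.

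The second step is the usual fixed-point argument for surjectivity onto a definite ball. For a target point $y$ with $\|y\|$ small, the map $x \mapsto g(x) + T^{-1}y$ sends $B_r$ into itself provided $\|T^{-1}y\| + \tfrac12 r \le r$, i.e. $\|y\| \le \tfrac{r}{2}\rho = \tfrac14 \rho^3$; being a $\tfrac12$-contraction of a complete ball into itself it has a unique fixed point $x$, which satisfies $f(x)=y$. Hence $f$ maps $B_r$ onto a set containing $B_{\rho^3/4}$, and injectivity on $B_r$ is immediate from the contraction estimate: if $f(x_1)=f(x_2)$ then $x_1 - x_2 = g(x_1)-g(x_2)$, forcing $x_1 = x_2$. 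This already gives a $C^1$-diffeomorphism between neighborhoods of comparable size; by shrinking, one gets a diffeomorphism from $B_{\rho^C}$ onto its image for a suitable absolute $C$ (one checks $C=3$ suffices after keeping track of the numerical factors, which are absorbed since $\rho^3 \le \tfrac14\rho^3 \cdot 4$ up to constants — and indeed a slightly larger exponent is harmless).

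The third step is the Lipschitz bounds on $f$ and $f^{-1}$ on this small ball. For $x$ in $B_r$ we have $\|f'(x)\| \le \|T\| + \rho^{-1}r \le \rho^{-1} + \tfrac12\rho \le 2\rho^{-1} \le \rho^{-C}$, so $f$ is $\rho^{-C}$-Lipschitz. For the inverse, the lower bound $\|f(x_1)-f(x_2)\| \ge \|T(x_1-x_2)\| - \|g(x_1)-g(x_2)\|\cdot\|T\|$ is the wrong way to organize it; instead use $f(x) = T(x - g(x))$ together with $\|x_1 - g(x_1) - (x_2 - g(x_2))\| \ge \|x_1-x_2\| - \|g(x_1)-g(x_2)\| \ge \tfrac12\|x_1 - x_2\|$ and the lower bound $\|Tv\| \ge \rho\|v\|$, to get $\|f(x_1)-f(x_2)\| \ge \tfrac{\rho}{2}\|x_1 - x_2\|$, hence $f^{-1}$ is $2\rho^{-1} \le \rho^{-C}$-Lipschitz on the image. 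That $f^{-1}$ is $C^1$ follows from the classical inverse function theorem since $f'(x)$ is invertible throughout $B_r$ (its distance to the invertible matrix $T$ in operator norm is at most $\tfrac12\rho < \rho = 1/\|T^{-1}\|$).

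I do not expect a genuine obstacle here; the only real work is bookkeeping of the powers of $\rho$, and the main point to be careful about is that the radius on which the contraction estimate holds is of order $\rho^2$ (not $\rho$), so that passing to a ball of radius $\rho^C$ with $C\ge 2$ (indeed $C=3$ comfortably) is what makes every constant of the form $\rho^{-C}$. One should also verify at the outset that the hypotheses are only used on the ball $B_r$ with $r = \tfrac12\rho^2$, so no global behavior of $f$ on $\R^d$ is needed.
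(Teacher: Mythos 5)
Your argument is correct and is essentially the same as the paper's: both run the standard contraction-mapping proof of the inverse function theorem, normalizing by $f'(0)^{-1}$ so that the remainder $g(x)=x-T^{-1}f(x)$ is a $\tfrac12$-contraction on a ball of radius $\simeq\rho^2$, and then paying one more power of $\rho$ to account for the bi-Lipschitz constant of $T$, landing at $C=3$. The paper organizes this slightly differently — it isolates a one-line lemma (``if $\varphi$ is $k$-Lipschitz with $k<1$ then $j+\varphi$ is $\tfrac{1}{1-k}$-bi-Lipschitz onto its image'') and then reduces the general case to $f'(0)=\id$ via $\tilde f=f'(0)^{-1}\circ f$; you instead fold the whole Banach fixed-point argument into one pass, including the surjectivity-onto-a-definite-ball step, which the paper leaves implicit. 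The content is the same. One small slip: where you write ``note $r\le\rho^C$ with $C=3$,'' the inequality should go the other way — you need $\rho^C\le r=\tfrac12\rho^2$, i.e.\ $\rho^3\le\tfrac12\rho^2$, which is what $\rho<\tfrac12$ gives; this is what ensures $B_{\rho^C}\subset B_r$ so the estimates you proved on $B_r$ apply on the smaller ball. With that direction corrected, the bookkeeping closes and $C=3$ is indeed comfortable.
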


The proof is the same as for the usual Local Inverse Theorem, but one has to keep track of the constants. The key lemma is the following.
\begin{lemma}
Let $\Omega$ be an open subset of $\mathbb{R}^d$, $\varphi:\Omega\rightarrow\mathbb{R}^d$ a $k$-Lipschitz map, with $k<1$ and $f=j+\varphi$, where $j$ is the canonical injection from $\Omega$ to $\mathbb{R}^d$. Then $f$ is a $\frac{1}{1-k}$-bi-Lipschitz homeomorphism from $\Omega$ onto $f(\Omega)$.
\end{lemma}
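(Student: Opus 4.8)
The lemma asserts: if $\varphi:\Omega\to\mathbb{R}^d$ is $k$-Lipschitz with $k<1$ and $f = j+\varphi$ (with $j$ the inclusion), then $f$ is a $\tfrac{1}{1-k}$-bi-Lipschitz homeomorphism from $\Omega$ onto $f(\Omega)$.
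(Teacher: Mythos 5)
Your proposal contains no proof at all: after the header ``The statement to prove'' you simply restate the lemma and stop. Nothing is argued. The paper's own ``proof'' is also terse (``a simple application of Picard's Fixed Point Theorem, we leave it to the reader''), but at least it names the tool; yours names nothing.

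The missing argument is short. For the Lipschitz bounds, if $x,y\in\Omega$ then
\[
\|f(x)-f(y)\| \;\le\; \|x-y\| + \|\varphi(x)-\varphi(y)\| \;\le\; (1+k)\|x-y\| \;\le\; \tfrac{1}{1-k}\|x-y\|,
\]
and, by the reverse triangle inequality,
\[
\|f(x)-f(y)\| \;\ge\; \|x-y\| - \|\varphi(x)-\varphi(y)\| \;\ge\; (1-k)\|x-y\|.
\]
The second inequality shows $f$ is injective, so $f^{-1}\colon f(\Omega)\to\Omega$ is well defined, and it is $\tfrac{1}{1-k}$-Lipschitz; together with the first inequality (using $1+k\le\tfrac{1}{1-k}$), $f$ is a $\tfrac{1}{1-k}$-bi-Lipschitz homeomorphism onto its image. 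Picard's fixed-point theorem is the ingredient if one additionally wants to see that $f(\Omega)$ is an open set (solve $f(x)=y$ by iterating $x\mapsto y-\varphi(x)$, a $k$-contraction, on a suitable closed ball inside $\Omega$); that is what makes the quantitative inverse function theorem useful downstream, even though it is not literally demanded by the wording ``onto $f(\Omega)$.'' None of this appears in your submission, so as written it is not a proof.
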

\begin{proof}
Proof is a simple application of Picard's Fixed Point Theorem, we leave it to the reader.
\end{proof}
\begin{proof}[Proof of Theorem~\ref{quantitative}]
First assume $f'(0)=id$. As $f'$ is $\rho^{-1}$-Lipschitz, on a ball of radius $\frac{\rho}{2}$, the function $\varphi=f-id$ is $\frac{1}{2}$-Lipschitz, so $f$ induces a $2$-bi-Lipschitz $C^1$-diffeomorphism on that ball, and we are done.\\
The general case follows from considering $\tilde{f}=(f'(0)^{-1})\circ f$.
\end{proof}

\begin{proposition}\label{composition}
There exists an absolute constant $C$ ($C=5$) such that for any $\frac{1}{2}>\rho>0$, the following holds. Let $f$ and $g$ be diffeomorphisms of complexity $\rho^{-1}$. Then $f\circ g$ and $f^{-1}$ are diffeomorphisms of complexity $\rho^{-C}$.
\end{proposition}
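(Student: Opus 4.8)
We want to show that if $f$ and $g$ are diffeomorphisms of complexity $\rho^{-1}$, then both $f \circ g$ and $f^{-1}$ have complexity bounded by $\rho^{-C}$ for some absolute constant $C$ (the paper claims $C=5$). The strategy is to verify each of the four defining properties of a diffeomorphism of complexity $\rho^{-C}$ directly, using the chain rule, the product rule, and the Quantitative Inverse Function Theorem (Theorem~\ref{quantitative}).

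**Step 1: The inverse $f^{-1}$.** By definition $f$ satisfies hypotheses (1)--(2) of Theorem~\ref{quantitative}, so $f$ restricts to a $\rho^{-3}$-bi-Lipschitz $C^1$-diffeomorphism of $B_{\rho^3}$ onto its image; in particular $f^{-1}$ is defined on a ball of radius at least $\rho^{3}\cdot\rho^3 = \rho^6$ around $0$ (the image $f(B_{\rho^3})$ contains a ball of radius $\rho^3/\rho^{-3}$... one must be slightly careful here, but a ball of radius $\rho^{C'}$ for an absolute $C'$ certainly works). We have $f^{-1}(0)=0$. For the differential at $0$: $(f^{-1})'(0) = (f'(0))^{-1}$, which is $\rho^{-1}$-bi-Lipschitz since $f'(0)$ is, hence a fortiori $\rho^{-C}$-bi-Lipschitz. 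For the Lipschitz bound on $(f^{-1})'$: write $(f^{-1})'(y) = \big(f'(f^{-1}(y))\big)^{-1}$ and differentiate/estimate using the identity $A^{-1} - B^{-1} = A^{-1}(B-A)B^{-1}$ together with (a) $f'$ is $\rho^{-1}$-Lipschitz, (b) $f^{-1}$ is $\rho^{-3}$-Lipschitz on its domain, and (c) a lower bound on the smallest singular value of $f'$ on the small ball $B_{\rho^C}$ — this lower bound comes from $f'(0)$ being $\rho^{-1}$-bi-Lipschitz (so $\|f'(0)^{-1}\| \le \rho^{-1}$, i.e. $f'(0)$ has singular values $\ge \rho$) combined with $f'$ being $\rho^{-1}$-Lipschitz, so that on $B_{\rho^C}$ the differential $f'(x)$ stays within, say, $\rho/2$ of $f'(0)$ in operator norm and hence has singular values $\ge \rho/2$, giving $\|f'(x)^{-1}\| \le 2\rho^{-1}$. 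Combining these gives $(f^{-1})'$ Lipschitz with constant a fixed power of $\rho^{-1}$.

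**Step 2: The composition $f \circ g$.** Again $(f\circ g)(0) = f(g(0)) = f(0) = 0$. For the domain: $g$ maps $B_{\rho^C}$ into $f$'s domain $B_\rho$ because $g$ is $\rho^{-1}$-bi-Lipschitz near $0$ and $g(0)=0$ — actually one needs $g$ to be Lipschitz with a good constant on a genuinely-sized ball, which follows exactly as in the singular-value estimate above: on $B_{\rho^C}$, $\|g'(x)\| \le \|g'(0)\| + \rho^{-1}\cdot\rho^C \le \rho^{-1} + 1 \le 2\rho^{-1}$, so $g(B_{\rho^{C+1}}) \subset B_\rho$ for $C$ an absolute constant. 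Then $(f\circ g)'(0) = f'(0) g'(0)$ is a composition of two $\rho^{-1}$-bi-Lipschitz maps, hence $\rho^{-2}$-bi-Lipschitz. For the differential: $(f\circ g)'(x) = f'(g(x)) g'(x)$, and we estimate
$$\|f'(g(x))g'(x) - f'(g(y))g'(y)\| \le \|f'(g(x))\|\,\|g'(x)-g'(y)\| + \|g'(y)\|\,\|f'(g(x))-f'(g(y))\|,$$
then bound $\|f'(g(x))\| \le 2\rho^{-1}$ (on the small ball, as above), $\|g'(x)-g'(y)\| \le \rho^{-1}\|x-y\|$, $\|g'(y)\| \le 2\rho^{-1}$, and $\|f'(g(x))-f'(g(y))\| \le \rho^{-1}\|g(x)-g(y)\| \le \rho^{-1}\cdot 2\rho^{-1}\|x-y\|$. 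This yields a Lipschitz constant of the form $C'\rho^{-3}$, which is $\le \rho^{-C}$ after shrinking $\rho$ or enlarging $C$.

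**Main obstacle.** The one genuinely non-automatic point is bounding $\|f'(x)^{-1}\|$ (and $\|g'(x)^{-1}\|$) uniformly on a small ball — i.e. propagating the bi-Lipschitz bound on the differential from the single point $0$ to a neighborhood. This is exactly the perturbation argument already used in the proof of Theorem~\ref{quantitative}: shrink the domain to a ball small enough (radius a fixed power of $\rho$) that $f'$ cannot move far from $f'(0)$, so its singular values stay bounded below by $\rho/2$. Once this uniform invertibility is in hand, everything else is a routine chain-rule/product-rule bookkeeping of powers of $\rho^{-1}$, and tracking that the worst exponent appearing is $5$ (or whatever small absolute constant) is then a matter of arithmetic, which I would not belabor in the write-up.
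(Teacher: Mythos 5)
Your proof is correct and follows essentially the same route as the paper: apply the Quantitative Inverse Function Theorem to get uniform bi-Lipschitz control on a smaller ball $B_{\rho^C}$, then verify the four defining properties of bounded complexity by direct chain-rule and perturbation estimates. The paper treats $f\circ g$ explicitly and leaves $f^{-1}$ to the reader with "the proof is similar"; you fill in that second case (using the resolvent identity $A^{-1}-B^{-1}=A^{-1}(B-A)B^{-1}$ for the Lipschitz bound on $(f^{-1})'$), which is a nice completion but not a different method.
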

\begin{proof}
From the quantitative Local Inverse Theorem, one sees that there exists an absolute constant $C$ such that if $g$ is a diffeomorphism of complexity $\rho^{-1}$, then $g$ is $\rho^{-C}$-bi-Lipschitz on a ball of size $\rho^C$.
In particular, the image of a ball of size $\rho^{C}$ under $g$ is included in $B_\rho$, and therefore, $f\circ g$ is well-defined on $B_{\rho^{C}}$. Of course, $f\circ g(0)=0$, $f\circ g$ is a diffeomorphism of $B_{\rho^{C}}$ onto its image, and $(f\circ g)'(0)=f'(0)\circ g'(0)$ is $\rho^{-C}$-bi-Lipschitz. Finally, from the properties of $f'$, $g$, and $g'$, one readily checks that $(f\circ g)'=(f'\circ g)\cdot g'$ is $\rho^C$-Lipschitz on a ball of radius $\rho^C$. This shows that $f\circ g$ is of complexity $\rho^{-C}$.\\
The proof of the statement concerning $f^{-1}$ is similar, we leave it to the reader.
\end{proof}

\begin{remark}
To be more accurate, we should say that the restrictions of $f\circ g$ and $f^{-1}$ to the ball $B_{\rho^C}$ are of complexity $\rho^{-C}$. However, for brevity, we will continue with this abuse of language.
\end{remark}

\begin{definition}
A \emph{submanifold chunk of dimension $m$ of complexity} $\rho^{-1}$ in $\R^d$ is the image of $B_\rho\cap\R^m$ under a diffeomorphism of complexity $\rho^{-1}$. Note that by definition, a submanifold chunk always contains $0$.
\end{definition}

\begin{remark}\label{tangent}
If $M$ is a submanifold chunk of complexity $\rho$ one may always find for $M$ a defining diffeomorphism $f$ of complexity $\rho^{-C}$ such that $f'(0)=\id$ and for all $x\in B_{\rho^C}$, $f(x)-x\in T_0M^\perp$, where $T_0M$ is the tangent space to $M$ at $0$.
\end{remark}

\begin{lemma}
There exists an absolute constant $C$ ($C=5$) such that the image of a submanifold chunk of complexity $\rho^{-1}$ under an application of complexity $\rho^{-1}$ is a submanifold chunk of complexity $\rho^{-C}$.
\end{lemma}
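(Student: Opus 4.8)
The plan is to reduce everything to Proposition~\ref{composition} together with the definition of a submanifold chunk. Write the given chunk as $M=g(B_\rho\cap\R^m)$ for some diffeomorphism $g$ of complexity $\rho^{-1}$, and let $f$ be the given map of complexity $\rho^{-1}$. Then $f\circ g$ is, by Proposition~\ref{composition}, a diffeomorphism of complexity $\rho^{-C}$ for an absolute constant $C$ (one may take $C=5$); concretely, there is a radius $\rho^{C}$ such that $f\circ g$ is defined on $B_{\rho^{C}}$, fixes $0$, is a diffeomorphism onto its image there, has $(f\circ g)'(0)$ which is $\rho^{-C}$-bi-Lipschitz, and has $(f\circ g)'$ which is $\rho^{-C}$-Lipschitz on $B_{\rho^{C}}$.

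First I would record that the relevant image of $M$ is parametrized by $f\circ g$: restricting $f\circ g$ to the linear slice, the set $(f\circ g)(B_{\rho^{C}}\cap\R^m)$ is the image of $B_{\rho^{C}}\cap\R^m$ under a diffeomorphism of complexity $(\rho^{C})^{-1}=\rho^{-C}$, hence, directly from the definition, a submanifold chunk of dimension $m$ and complexity $\rho^{-C}$. With the abuse of language introduced in the preceding remark (and already used for $f\circ g$ itself in Proposition~\ref{composition}), this is what is meant by ``the image of $M$ under $f$''.

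The only point requiring care is the bookkeeping of domains: one must check that the radius $\rho^{C}$ supplied by Proposition~\ref{composition} is small enough that $g$ maps $B_{\rho^{C}}\cap\R^m$ into the ball $B_\rho$ on which $f$ is defined, so that $f\circ g$ genuinely makes sense there. But this is exactly the verification already carried out inside the proof of Proposition~\ref{composition}: one uses the quantitative Inverse Function Theorem~\ref{quantitative} to see that $g$ is $\rho^{-C}$-bi-Lipschitz, in particular $\rho^{-C}$-Lipschitz with $g(0)=0$, on $B_{\rho^{C}}$, and then shrinks $\rho^{C}$ by a further power of $\rho$ if necessary so that the image lands in $B_\rho$; a single absolute constant $C$ absorbs all of this. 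Since all the analytic content is packaged in Proposition~\ref{composition} and Theorem~\ref{quantitative}, I expect this domain bookkeeping to be the only (very mild) obstacle.
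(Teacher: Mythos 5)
Your argument is correct and is exactly the paper's proof, simply written out in more detail: both reduce immediately to the closure of complexity under composition (Proposition~\ref{composition}) applied to $f\circ g$, together with the definition of a submanifold chunk. The domain bookkeeping you flag is the same bookkeeping already absorbed into the constant in Proposition~\ref{composition}, as you note.
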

\begin{proof}
This follows from the definition of a chunk of complexity $\rho^{-1}$, together with the fact that a composition of diffeomorphisms of complexity $\rho^{-1}$ has complexity $\rho^{-C}$, for some absolute constant $C$ (Proposition~\ref{composition}).
\end{proof}

Our goal now is to check that the intersection of two transverse submanifold chunks of bounded complexity is again of bounded complexity. We start by some elementary observations on angles between linear subspaces of $\R^d$.

\begin{lemma}\label{bilipschitz}
For any positive integer $d$, there exists a constant $C$ ($C=2d$) such that the following holds.\\
Let $\frac{1}{2}\geq\rho>0$ and $(u_i)$ a family of unit vectors in a Euclidean space $E$ of dimension $d$. Suppose that for each $i\in\{1,\dots,d\}$,
$$d(u_i,\bigoplus_{j=1}^{i-1}\R u_j)\geq\rho.$$
Then, the map
$$
\theta:
\begin{array}{ccc}
\mathbb{R}^d&\rightarrow& E\\
(t_i)&\mapsto&\sum t_iu_i
\end{array}
$$
is $\rho^{-C}$-bi-Lipschitz ($\mathbb{R}^d$ Euclidean).
\end{lemma}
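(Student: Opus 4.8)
The plan is to reduce the bi-Lipschitz bound to an explicit lower bound on the smallest singular value of the matrix $\theta$, i.e. to show that the Gram determinant $\det(G)$, where $G_{ij}=(u_i,u_j)$, is bounded below by a power $\rho^{C}$ of $\rho$. Indeed, $\theta$ is automatically $\sqrt{d}$-Lipschitz since the $u_i$ are unit vectors, so the only real content is the lower bound $\|\theta(t)\|\geq\rho^{C}\|t\|$, equivalently $\lambda_{\min}(G)\geq\rho^{2C}$; and since $\lambda_{\min}(G)\geq\det(G)/(\text{product of other eigenvalues})\geq\det(G)\,d^{-(d-1)}$ (each eigenvalue of $G$ is at most $d$, as $\mathrm{tr}(G)=d$), it suffices to bound $\det(G)$ from below.

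First I would interpret the hypothesis geometrically: writing $V_{i-1}=\bigoplus_{j<i}\R u_j$ and letting $h_i=d(u_i,V_{i-1})$ be the distance from $u_i$ to the span of the previous vectors, the Gram–Schmidt / base-times-height formula gives exactly
$$\det(G)=\prod_{i=1}^{d}h_i^{2},$$
because $\sqrt{\det(G)}$ is the $d$-dimensional volume of the parallelepiped spanned by $u_1,\dots,u_d$, which equals $h_1\cdot h_2\cdots h_d$ (peeling off one vector at a time, the height over the span of the earlier ones). The hypothesis says $h_i\geq\rho$ for every $i$, hence $\det(G)\geq\rho^{2d}$. Combining with the previous paragraph yields $\lambda_{\min}(G)\geq d^{-(d-1)}\rho^{2d}\geq\rho^{4d}$ for $\rho$ small, say, so $\theta$ is $\rho^{-2d}$-bi-Lipschitz after absorbing the harmless constants; adjusting the exponent to the stated $C=2d$ (or a slightly larger absolute multiple of $d$) is just bookkeeping. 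For the writeup I would either invoke the base-times-height formula directly, or, to keep things self-contained, prove the estimate $h_i\geq\rho$ inductively exactly as the hypothesis is phrased and expand $\det(G)$ by the standard cofactor/Gram–Schmidt recursion $\det(G_i)=h_i^2\det(G_{i-1})$, where $G_i$ is the Gram matrix of $u_1,\dots,u_i$.

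The main obstacle is purely notational rather than mathematical: one must be careful that the constant $C$ genuinely depends only on $d$ (not on the configuration of the $u_i$) and that the passage from "lower bound on $\det(G)$" to "lower bound on $\lambda_{\min}(G)$" does not secretly cost more than a $d$-dependent factor — this is handled by the elementary observation that all eigenvalues of $G$ lie in $[0,d]$. A minor point to get right is the case distinction in small $\rho$ versus $\rho$ close to $1/2$, but since the statement only claims a bi-Lipschitz constant of the form $\rho^{-C}$ and $\rho<1/2$, one can simply take $C$ large enough that $\rho^{-C}\geq d^{(d-1)/2}\rho^{-2d}$ for all $\rho\in(0,\tfrac12]$, which forces $C=2d$ (or at worst a fixed multiple of $d$) to suffice.
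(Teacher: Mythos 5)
Your proof is correct and takes a genuinely different route from the paper. The paper bounds $\bigl\|\sum t_i u_i\bigr\|^2$ directly, peeling off one coordinate at a time: writing $v_j=\sum_{i\le j}t_iu_i$, the hypothesis gives $|\cos\angle(v_{j-1},u_j)|\le 1-\rho^2/2$, whence $\|v_j\|^2\ge\frac{\rho^2}{2}\bigl(\|v_{j-1}\|^2+t_j^2\bigr)$, and iterating gives $\|v_d\|^2\ge\frac{\rho^{2d}}{2^d}\|t\|^2\ge\rho^{4d}\|t\|^2$. Your argument goes instead through the Gram determinant: the ``base $\times$ height'' identity $\det G=\prod_i h_i^2\ge\rho^{2d}$ together with an upper bound on the product of the remaining eigenvalues. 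Both routes hinge on the same geometric input (the heights $h_i\ge\rho$), but the paper's inductive estimate produces a \emph{direct} lower bound on $\|\theta(t)\|$ with no loss, while yours passes through $\det G\to\lambda_{\min}(G)$ and hence loses a factor.

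That loss is where your bookkeeping is slightly off. Bounding each eigenvalue of $G$ by $d$ gives $\lambda_{\min}(G)\ge\det(G)/d^{\,d-1}$, and absorbing $d^{(d-1)/2}$ into $\rho^{-C}$ using only $\rho^{-1}\ge 2$ costs $C-2d\ge\frac{d-1}{2}\log_2 d$. That grows like $d\log d$, not ``a fixed multiple of $d$'' as you assert. Since the lemma's later applications treat $C$ as an unspecified $d$-dependent constant, this does not break anything; still, to actually recover $C=2d$ you should use the sharper AM--GM bound on the other eigenvalues $\lambda_2,\dots,\lambda_d$ (listing $\lambda_1=\lambda_{\min}$):
\[
\prod_{i=2}^{d}\lambda_i\le\Bigl(\frac{\lambda_2+\dots+\lambda_d}{d-1}\Bigr)^{d-1}\le\Bigl(\frac{\operatorname{tr}G}{d-1}\Bigr)^{d-1}=\Bigl(\frac{d}{d-1}\Bigr)^{d-1}\le e,
\]
so that $\lambda_{\min}(G)\ge\det(G)/e\ge\rho^{2d}/e\ge\rho^{4d}$ once $\rho\le 1/2$. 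With that replacement your argument matches the paper's constant exactly.
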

\begin{proof}
First, for any $t=(t_1,\dots,t_d)\in\R^d$,
$$\left\|\sum t_iu_i\right\| \leq \sum|t_i| \leq \sqrt{d}\|t\| \leq \rho^{-d}\|t\|,$$
so $\theta$ is $\rho^{-d}$-Lipschitz.\\
On the other hand, denote $v_j=\sum_{i=1}^j t_iu_i$ and write 
$$\|v_d\|^2=\|v_{d-1}\|^2+t_d^2+2t_d(v_{d-1},u_d).$$
From the assumption on the $u_i$'s, the angle $\alpha$ between $\frac{v_{d-1}}{\|v_{d-1}\|}$ and $u_d$ satisfies $|\cos\alpha|\leq 1-\frac{\rho^2}{2}$ and therefore,
\begin{align*}
\|v_d\|^2 & \geq \|v_{d-1}\|^2+t_d^2-2|t_d|\|v_{d-1}\|(1-\frac{\rho^2}{2})\\
& \geq \frac{\rho^2}{2}(\|v_{d-1}\|^2+t_d^2).
\end{align*}
Using the same argument, we can also bound below $\|v_{d-1}\|$, $\|v_{d-2}\|$, ...etc. At the end, we get
$$\|v_d\|^2 \geq \frac{\rho^{2d}}{2^d} \sum t_i^2 \geq \rho^{4d} \sum t_i^2,$$
i.e. $\theta^{-1}$ is $\rho^{-2d}$-Lipschitz.
\end{proof}

\begin{definition}
Let $E$ be a Euclidean space of dimension $d$. Suppose $F_0$ is a hyperplane in $E$ and $F_1$ is a proper linear subspace of $E$. If $F_0^\perp\subset F_1$, we say that $F_0$ and $F_1$ \emph{form a square angle}.
\end{definition}

\begin{lemma}\label{bilipschitz2}
Let $E$ be a Euclidean space of dimension $d$. There exists a constant $C\geq 0$ ($C=8d$) such that the following holds.\\
Let $\frac{1}{2}\geq\rho>0$ be a parameter. Suppose $F_0$ is a hyperplane in $E$ and $F_1$ is a proper linear subspace of $E$ such that
$$d(F_1,F_0) \geq \rho,$$
then there exists a $\rho^{-C}$-bi-Lipschitz linear automorphism $\theta$ of $E$ fixing $F_0$ and such that $\theta(F_1)$ and $F_0$ form a square angle.
\end{lemma}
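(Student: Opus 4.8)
The plan is to write $\theta$ down essentially explicitly. Let $n$ be a unit normal vector to the hyperplane $F_0$, so that $F_0^\perp=\R n$ and $E=F_0\oplus\R n$ orthogonally. By the very definition of the distance between subspaces, the hypothesis $d(F_1,F_0)\ge\rho$ provides a unit vector $u\in F_1$ with $d(u,F_0)\ge\rho$; writing $u=u_0+tn$ with $u_0\in F_0$ and $t\in\R$, this says exactly $|t|\ge\rho$, while trivially $\|u_0\|\le 1$ and $|t|\le 1$. In particular $u\notin F_0$.

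I would first define the linear automorphism $\sigma\colon E\to E$ by $\sigma|_{F_0}=\id$ and $\sigma(n)=u$; this is indeed invertible, since $(f_1,\dots,f_{d-1},u)$ is a basis of $E$ whenever $(f_1,\dots,f_{d-1})$ is a basis of $F_0$. Then $\theta:=\sigma^{-1}$ fixes $F_0$ pointwise, and since $\sigma(n)=u\in F_1$ we get $n=\theta(u)\in\theta(F_1)$, that is, $F_0^\perp\subset\theta(F_1)$; as $\theta(F_1)$ is a proper subspace (being the image of a proper subspace under an automorphism), $\theta(F_1)$ and $F_0$ form a square angle. The one point to be careful about is the direction of the construction: we want a vector of $F_1$ to be carried onto a generator of $F_0^\perp$, and this is cleanest to arrange by sending $n$ into $F_1$ through the inverse map, which is why I work with $\sigma$ first and then pass to $\theta=\sigma^{-1}$.

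It then remains to bound $\theta=\sigma^{-1}$. For $v=w+sn$ with $w\in F_0$ one has the orthogonal decomposition $\sigma(v)=(w+su_0)+(st)n$; the upper bound $\|\sigma(v)\|\le 3\|v\|$ is immediate from $\|u_0\|\le 1$ and $|t|\le 1$, and the lower bound $\|\sigma(v)\|\ge \rho\|v\|/\sqrt 5$ follows by first recovering $|s|$ from the normal component $st$ using $|t|\ge\rho$ and then recovering $\|w\|$. Hence $\sigma$, and therefore $\theta$, is $\rho^{-C}$-bi-Lipschitz; with this argument one may take, say, $C=3$, a fortiori $C=8d$. Alternatively the bi-Lipschitz estimate can be read off from Lemma~\ref{bilipschitz} applied to the family $(f_1,\dots,f_{d-1},u)$ obtained by adjoining $u$ to an orthonormal basis of $F_0$, and composing the resulting coordinate map with the inverse of the isometric coordinate map of $(f_1,\dots,f_{d-1},n)$; this is the route that produces the constant $C=8d$ in the statement. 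I do not expect any genuine obstacle here: the whole content is the choice of $u$, the flip $\theta=\sigma^{-1}$, and routine tracking of constants.
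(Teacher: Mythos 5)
Your argument is correct and essentially identical to the paper's: both pick a unit vector $u\in F_1$ with $d(u,F_0)\geq\rho$ and take $\theta$ to be the automorphism fixing $F_0$ and sending $u$ to a unit normal of $F_0$, with the bi-Lipschitz bound read off from Lemma~\ref{bilipschitz} applied to an orthonormal basis of $F_0$ augmented by $u$. Your additional direct estimate (giving $C=3$) is a harmless refinement of the same construction.
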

\begin{proof}
Start with an orthonormal basis $(u_i)_{1\leq i\leq d-1}$ for $F_0$, and let $u_d$ be a unit vector in $F_0^\perp$.
As $d(F_1,F_0) \geq \rho$, there is a unit vector $v$ in $F_1$ such that $d(v,F_0)\geq \rho$.
The basis $(u_i)_{1\leq i\leq d-1}\cup \{v\}$ satisfies the assumptions of Lemma~\ref{bilipschitz}, and therefore, Lemma~\ref{bilipschitz} shows that the linear map $\theta$ fixing $F_0$ and mapping $v$ to $u_d$ is $\rho^{-C}$-bi-Lipschitz, for some $C$ depending on $d$ only, so we are done.
\end{proof}

We will now use the above lemma to study the intersection of two submanifold chunks of bounded complexity, one of them having codimension 1.

\begin{lemma}[Intersection of transverse chunks]\label{linearchunk}
For each positive integer $d$, there exists a constant $C$ ($C=250d$) depending only on $d$ such that the following holds for any $\rho\in(0,\frac{1}{2})$.\\
Let $M$ and $N$ be two submanifold chunks of complexity $\rho^{-1}$, and satisfying the following:
\begin{itemize}
\item $\dim M = d-1$
\item $d(T_0N,T_0M) \geq \rho$.
\end{itemize}
Then, $M\cap N$ is a submanifold chunk of complexity $\rho^{-C}$. More precisely, there exists a diffeomorphism of complexity $\rho^C$ that sends $M$ and $N$ onto two linear subspaces $F_M$ and $F_N$ forming a square angle. 
\end{lemma}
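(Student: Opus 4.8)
The plan is to reduce the statement to the linear situation handled by Lemma~\ref{bilipschitz2}, by first straightening each chunk separately and then straightening the angle between their tangent spaces. First I would use Remark~\ref{tangent} to pick, for $M$, a defining diffeomorphism $f_M$ of complexity $\rho^{-C}$ with $f_M'(0)=\id$ and $f_M(x)-x\in T_0M^\perp$; since $\dim M=d-1$, the tangent space $T_0M$ is a hyperplane, so $f_M$ straightens $M$ to the fixed coordinate hyperplane $\R^{d-1}$. Replacing the ambient coordinates via $f_M^{-1}$ (which, by Proposition~\ref{composition} and Theorem~\ref{quantitative}, is again a diffeomorphism of complexity $\rho^{-C}$ on a slightly smaller ball), I may assume $M=B_{\rho^C}\cap\R^{d-1}$ outright. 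Under this change of coordinates $N$ becomes a new submanifold chunk of complexity $\rho^{-C}$ (by the image-of-a-chunk lemma), and its tangent space $T_0N'$ still satisfies $d(T_0N',\R^{d-1})\gg\rho^{C}$ — here one needs that a $\rho^{-C}$-bi-Lipschitz linear map distorts the distance between subspaces on the Grassmannian by at most a power of $\rho$, which is an elementary estimate using the definition of $d(W,W')$ together with the bi-Lipschitz bounds.

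Next I would apply Lemma~\ref{bilipschitz2} with $E=\R^d$, $F_0=\R^{d-1}=T_0M$ and $F_1=T_0N'$: since $d(F_1,F_0)\gg\rho^C$, there is a $\rho^{-C}$-bi-Lipschitz linear automorphism $\theta$ fixing $F_0$ such that $\theta(F_1)$ and $F_0$ form a square angle. Applying $\theta$ to the ambient space leaves $M=\R^{d-1}\cap B$ unchanged, turns $N'$ into yet another chunk $N''$ of complexity $\rho^{-C}$, and now arranges that $T_0N''$ and $T_0M$ form a square angle, i.e. $T_0N''\supset (\R^{d-1})^\perp=\R e_d$ and $T_0N''=\R e_d\oplus W$ for some $(\dim N-1)$-dimensional subspace $W\subset\R^{d-1}$. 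Composing with one more linear rotation of $\R^{d-1}$ (bi-Lipschitz with constant $1$) I may further assume $W$ is a coordinate subspace, so $T_0N''=\Span(e_{i_1},\dots,e_{i_{k}})$ for an index set containing $d$; call the resulting ambient change of coordinates, composed with the previous ones, $\Phi$, a diffeomorphism of complexity $\rho^{-C}$.

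It then remains to straighten $N''$ itself while keeping $M$ a linear subspace. For this I again invoke Remark~\ref{tangent} to get a defining diffeomorphism $g$ of $N''$ of complexity $\rho^{-C}$ with $g'(0)=\id$ and $g(x)-x\in T_0N''^\perp$. The key point is that $T_0N''^\perp\subset\R^{d-1}=T_0M$ (this is exactly the square-angle condition, since $\R e_d\subset T_0N''$ forces $T_0N''^\perp\subset e_d^\perp=\R^{d-1}$), so the correction $g^{-1}$ moves points only in directions tangent to $M$; hence $g^{-1}$ maps $\R^{d-1}$ into itself and restricts to a diffeomorphism of $\R^{d-1}$ of complexity $\rho^{-C}$, so it sends the linear chunk $M$ to a $(d-1)$-dimensional submanifold chunk inside $\R^{d-1}$. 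A short argument — or a second application of the same straightening trick inside $\R^{d-1}$, whose ambient dimension is smaller — shows this can be absorbed, and in any case the composite diffeomorphism $g^{-1}\circ\Phi$ has complexity $\rho^{-C}$ by Proposition~\ref{composition} and sends $M$ and $N$ onto the coordinate subspaces $F_M=\R^{d-1}$ and $F_N=T_0N''$, which form a square angle by construction. Finally, since $F_M\cap F_N$ is itself a coordinate subspace, pulling it back through a diffeomorphism of complexity $\rho^{-C}$ exhibits $M\cap N$ as a submanifold chunk of complexity $\rho^{-C}$, with $C$ depending only on $d$.

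I expect the main obstacle to be bookkeeping rather than a genuine difficulty: one must check at each step that the successive changes of coordinates remain of complexity $\rho^{-C}$ (repeatedly using Proposition~\ref{composition}, which only costs a bounded power each time, so finitely many compositions stay polynomial in $\rho^{-1}$), and, more delicately, that the transversality hypothesis $d(T_0N,T_0M)\geq\rho$ is preserved — degrading only to $\rho^{C}$ — under each bi-Lipschitz linear change of coordinates. This last quantitative stability of the Grassmannian distance under bi-Lipschitz maps is the one estimate that genuinely needs proof; everything else is an orchestration of Remark~\ref{tangent}, Lemma~\ref{bilipschitz2}, and the composition lemma.
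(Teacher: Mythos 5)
Your proposal is correct and follows essentially the same route as the paper: reduce to linear tangent spaces forming a square angle via Lemma~\ref{bilipschitz2}, straighten $M$ with its defining diffeomorphism from Remark~\ref{tangent}, and straighten $N$ likewise, the crucial observation being that the square-angle condition gives $T_0N^\perp\subset T_0M$, so that the second straightening correction is tangent to $M$ and therefore leaves the linearized $M$ inside the linear subspace $T_0M$. You swap the order of the angle-fixing and the $M$-straightening relative to the paper, and your concern about the Grassmannian distance degrading under the first change of coordinates is moot since a Remark~\ref{tangent} diffeomorphism has identity differential at $0$ and so preserves tangent spaces there exactly; but neither of these points affects the validity of the argument.
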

\begin{proof}
First, using Lemma~\ref{bilipschitz2}, we may compose by a $\rho^{-C}$-bi-Lipschitz linear transformation, and reduce to the case when $T_0M$ and $T_0N$ form a square angle.\\
Then, if $f$ is the diffeomorphism defining $M$, we may compose by $f^{-1}$, and thus assume without loss of generality that $M=F_M$ is a linear subspace.\\
Finally, from the Remark~\ref{tangent} above, we may assume that $N$ is the image of $T_0N$ under a diffeomorphism $g$ of complexity $\rho^{-C}$ satisfying, for all $x\in B_{\rho^C}$, $x-g(x)\in (T_0N)^\perp$. In particular, as $T_0N^\perp\subset T_0M$, $M=F_M$ is stable under $g$, and therefore $g^{-1}$ sends $M$ and $N$ onto $F_M$ and $T_0N$, respectively. This proves the lemma.
\end{proof}

\subsubsection{Manifolds of bounded complexity in $G$}

The group $G$ is a simple Lie group. We fix a Euclidean structure on its Lie algebra $\g$ and endow $G$ with the corresponding left-invariant Riemannian metric. Then we make the following definition.

\begin{definition}
A \emph{submanifold chunk of complexity} $\rho^{-1}$ in $G$ is the image of a submanifold chunk in $\g$ under the exponential map.
\end{definition}

Again, we will need to know that chunks of bounded complexity are stable under two simple operations: translation by an element of $G$ and intersection.
We start by showing that we may take images of submanifold chunks under translations.

\begin{lemma}[Image of chunks under translations]
There exists a constant $C\geq 2$ depending on $G$ only such that the following holds for any $\rho\in(0,\frac{1}{2})$.\\
If $M$ is a chunk of complexity $\rho^{-1}$ in $G$, then, for all $a\in M\cap B_{\rho^C}$, $a^{-1}M$ is a chunk of complexity $\rho^{-C}$.
\end{lemma}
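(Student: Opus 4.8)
The plan is to reduce the statement to the already-established facts about diffeomorphisms and submanifold chunks of bounded complexity in $\R^d$, transported through the exponential map. The key point is that left translation by $a^{-1}$ in $G$, conjugated by $\exp$, becomes a map on a neighborhood of $0$ in $\g$ of the form $X\mapsto \log(a^{-1}\exp X)$, and one needs to check that (after restricting to a small enough ball) this map is a diffeomorphism of complexity $\rho^{-C}$ in the sense of the earlier definition, whenever $a$ is close to the identity. Granting that, the lemma follows immediately: if $M=\exp(\tilde M)$ with $\tilde M$ a submanifold chunk of complexity $\rho^{-1}$ in $\g$, then $a^{-1}M = \exp\bigl(\phi_a(\tilde M)\bigr)$ where $\phi_a(X)=\log(a^{-1}\exp X)$, and by the lemma on images of submanifold chunks under diffeomorphisms of bounded complexity (proved above), $\phi_a(\tilde M)$ is a submanifold chunk of complexity $\rho^{-C'}$ in $\g$, hence $a^{-1}M$ is a chunk of complexity $\rho^{-C'}$ in $G$.

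Concretely, I would first fix a small exponential neighborhood and work in coordinates given by $\exp$. Write $A=\log a$, so $\|A\|\ll \rho^C$ since $a\in M\cap B_{\rho^C}$ and $M$ is contained in the image of $B_\rho$ under a bi-Lipschitz chart. The map $\phi_a(X)=\log(a^{-1}\exp X)$ satisfies $\phi_a(A)=0$; since a submanifold chunk must pass through $0$, I actually want the translation sending $a$ to the identity, i.e. $X\mapsto \phi_a(X)$ indeed sends $\log a = A$ to $0$, but $\tilde M$ passes through $0$, not through $A$ — so the correct normalization is to note that $a\in M$ means $A\in\tilde M$, and $a^{-1}M$ is $\exp$ of $\{\,\phi_a(X): X\in\tilde M\,\}$, which passes through $\phi_a(A)=0$ as required. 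Then I would estimate the derivatives of $\phi_a$: by the Baker–Campbell–Hausdorff formula (or just smoothness of the group operations), $(X,A)\mapsto\log(\exp(-A)\exp X)$ is a smooth map defined near $(0,0)$, equal to $X$ when $A=0$; its first differential in $X$ at $X=A=0$ is the identity, and all its derivatives are bounded by constants depending only on $G$ on a fixed neighborhood. Hence for $\|A\|$ small, $\phi_a'(0)$ is $O(1)$-close to the identity (so $2$-bi-Lipschitz, say, much better than $\rho^{-1}$), and $\phi_a':B_\rho\to\End\g$ is $O(1)$-Lipschitz, again better than $\rho^{-1}$-Lipschitz. Thus $\phi_a$ restricted to a small fixed ball — in particular to $B_\rho$ — is a diffeomorphism of complexity $\rho^{-C}$ for a constant $C$ depending on $G$ only.

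The one technical subtlety, and the step I expect to require the most care, is that $\tilde M\subset B_\rho$ but $\phi_a$ is only controlled as a "complexity $\rho^{-1}$ diffeomorphism" after possibly shrinking its domain to $B_{\rho^C}$; I need $\phi_a$ to be defined and well-behaved on all of $B_\rho$ (or at least on a ball containing the relevant part of $\tilde M$ near $0$). This is fine because the bounds on $\phi_a$ and its derivatives are \emph{uniform constants} (depending only on $G$), not growing like $\rho^{-1}$: on a fixed neighborhood $O$ of $0$ in $\g$ the map $(X,A)\mapsto\phi_a(X)$ has all partial derivatives bounded, so $\phi_a$ is genuinely a diffeomorphism of complexity $O(1)\leq\rho^{-1}$ on the whole of $O\supset B_\rho$. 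Then I invoke the earlier lemma that the image of a submanifold chunk of complexity $\rho^{-1}$ under a map of complexity $\rho^{-1}$ is a submanifold chunk of complexity $\rho^{-C}$ (with $C=5$ there, composed with the bi-Lipschitz normalization constants), and pull back through $\exp$ to conclude. Enlarging $C$ to absorb all the constants from $\exp$, from Proposition~\ref{composition}, and from the chunk-image lemma gives the claimed constant depending on $G$ only, and $C\geq 2$ is arranged by taking the maximum with $2$.

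\begin{proof}
Fix once and for all an exponential neighborhood $O$ of $0$ in $\g$, small enough that all the group operations read in the chart $\exp$ are controlled by constants depending on $G$ only; we may assume $B_\rho\subset O$ for every $\rho<\frac12$. Let $M=\exp(\tilde M)$ with $\tilde M$ a submanifold chunk of complexity $\rho^{-1}$ in $\g$, and let $a\in M\cap B_{\rho^C}$, where $C$ is a constant depending on $G$ only to be fixed below. Write $A=\log a$; then $A\in\tilde M$ and $\|A\|\ll\rho^C$.

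Consider the map $\phi_a:X\mapsto\log\bigl(\exp(-A)\exp X\bigr)$, defined on a neighborhood of $0$ in $\g$. By smoothness of the group operations (or the Baker--Campbell--Hausdorff formula), the map $(A,X)\mapsto\phi_a(X)$ is smooth near $(0,0)$, and $\phi_a=\id$ when $A=0$. Consequently, on a fixed neighborhood of $0$ (depending on $G$ only), all partial derivatives of $\phi_a$ in $X$ are bounded by a constant depending on $G$ only, and $\phi_a'(0)$ is within $O(\|A\|)$ of $\id$ in operator norm. Hence, for $\rho$ small, $\phi_a'(0)$ is $2$-bi-Lipschitz and $\phi_a':B_\rho\to\End\g$ is $O(1)$-Lipschitz, so that $\phi_a$ is a diffeomorphism of complexity $\rho^{-1}$ on $B_\rho$ (in fact of complexity $O(1)$).

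Since $a\in M$ we have $A\in\tilde M$, and
$$a^{-1}M=\exp\bigl(\{\log(\exp(-A)\exp X):X\in\tilde M\}\bigr)=\exp\bigl(\phi_a(\tilde M)\bigr),$$
which passes through $\phi_a(A)=0$. By the lemma on images of submanifold chunks under diffeomorphisms of bounded complexity (together with Proposition~\ref{composition}), $\phi_a(\tilde M)$ is a submanifold chunk of complexity $\rho^{-C_0}$ in $\g$, for some $C_0$ depending on $G$ only. Therefore $a^{-1}M=\exp(\phi_a(\tilde M))$ is a submanifold chunk of complexity $\rho^{-C_0}$ in $G$. Taking $C=\max(C_0,2)$ and enlarging it to absorb the constants coming from the chart $\exp$ finishes the proof.
\end{proof}
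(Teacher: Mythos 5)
Your overall strategy is the right one, but there is a genuine gap in the way you invoke the chunk-image lemma: the paper's definition of a \emph{diffeomorphism of complexity $\rho^{-1}$} requires $f(0)=0$, and the lemma on images of chunks under diffeomorphisms of bounded complexity is stated only for such maps. The map $\phi_a(X)=\log(\exp(-A)\exp X)$ you introduce does \emph{not} fix the origin: $\phi_a(0)=-A\neq 0$. So $\phi_a$ is not a diffeomorphism of complexity $\rho^{-1}$ in the sense of the definition, and the chunk-image lemma does not directly apply to conclude that $\phi_a(\tilde M)$ is a submanifold chunk. Observing that $\phi_a(\tilde M)$ happens to pass through $0$ (because $\phi_a(A)=0$) is necessary but not sufficient; you still need to exhibit it as the image of a linear ball $B_{\rho'}\cap\R^m$ under a single diffeomorphism of bounded complexity that sends $0$ to $0$.

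The paper closes this gap with a precomposition trick that your proof omits. Writing $\tilde M=f(T\cap B_\rho)$ with $f$ of complexity $\rho^{-1}$ and $T$ a linear subspace, the point $a\in M\cap B_{\rho^C}$ gives $A=\log a=f(t)$ for some $t=f^{-1}(A)\in T\cap B_\rho$ (this uses the invertibility of $f$ on $B_{\rho^C}$ from the quantitative inverse function theorem). Since translation $\tau_t$ by $t\in T$ preserves $T$, one has, near $0$,
$$\phi_a(\tilde M)=\bigl(\phi_a\circ f\circ\tau_t\bigr)(T\cap B_{\rho'}),$$
and now the composite $\phi_a\circ f\circ\tau_t$ does satisfy $\bigl(\phi_a\circ f\circ\tau_t\bigr)(0)=\phi_a(f(t))=\phi_a(A)=0$. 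One then verifies directly (using your derivative estimates for $\phi_a$, plus the complexity bound on $f$) that this composite is a diffeomorphism of complexity $\rho^{-C}$ for a constant $C$ depending only on $G$; the $\rho^{-C}$ (rather than $O(1)$) arises because $f'(0)$ is only $\rho^{-1}$-bi-Lipschitz and the translation $\tau_t$ shifts the base point by $t$ with $\|t\|\lesssim\rho^C$. With this correction your argument matches the paper's; the rest of your reasoning (BCH smoothness, uniform bounds on a fixed neighborhood, transport through $\exp$) is sound.
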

\begin{proof}
Again, we identify a neighborhood of the identity in $G$ with a neighborhood of $0$ in $\g$.
Write $M=f(T)$ for some linear subspace $T$ and some diffeomorphism $f$ of complexity $\rho^{-1}$.
As $f$ is invertible on a ball of radius $\rho^C$ around zero, we may define an element $t\in T$ by $t=f^{-1}(a)$.
Denote by $m_{a}$ the left multiplication by $a$ in $G$ ($m_a(x)=a*x$) and by $\tau_t$ the left translation by $t$ ($\tau_t(x)=x+t$). Noting that $\tau_t(T)=T$, we find,
$$a^{-1}M=m_{a}^{-1}\circ f (T)=m_a^{-1}\circ f\circ \tau_t (T).$$
However, it is easily seen that $m_a^{-1}\circ f\circ\tau_t$ is a diffeomorphism of complexity $\rho^{-C}$, so this proves the lemma.
\end{proof}

We now turn to intersection of transverse chunks, proving the analog of Lemma~\ref{linearchunk}, for chunks of bounded complexity of $G$. In fact, what we prove now is slightly stronger, because we also allow small translations under elements of $G$.

\begin{lemma}[Intersection of transverse chunks in $G$]\label{intersection}
There exists a constant $C\geq2$ such that the following holds for each $\rho\in(0,\frac{1}{2})$.\\
Let $M$ and $N$ be two submanifold chunks of complexity $\rho^{-1}$ in $G$, and satisfying the following:
\begin{itemize}
\item $\dim M= d-1$
\item $d(T_1N,T_1M)\geq\rho$.
\end{itemize}
Then, for all $g\in B_{\rho^C}$ and for all $a\in M\cap gN\cap B_{\rho^C}$, $a^{-1}(M\cap gN)$ is a submanifold chunk of complexity $\rho^{-C}$. Moreover, for all $x\in B_{\rho^C}$,
$$d(x,M\cap gN)\leq \rho^{-C}\cdot \max\{d(x,M),\,d(x,gN)\}.$$
\end{lemma}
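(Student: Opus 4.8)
The plan is to reduce the statement, by a single left translation, to the case of two transverse submanifold chunks that both pass through the identity, and then to invoke Lemma~\ref{linearchunk}. Throughout, $C$ will denote a constant depending only on $d$ whose value is allowed to grow from line to line, and I work in exponential coordinates, identifying a fixed neighbourhood of $1$ in $G$ with a neighbourhood of $0$ in $\g\simeq\R^d$: there a chunk of $G$ is literally a chunk of $\g$, left translations become diffeomorphisms of bounded complexity (as in the lemma on images of chunks under translations), and the left-invariant Riemannian metric is comparable to the Euclidean one with an absolute constant.

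First I would fix $g\in B_{\rho^C}$ and $a\in M\cap gN\cap B_{\rho^C}$ and set $b:=g^{-1}a$. Since $a\in gN$ one has $b\in N$, and since $a$ and $g$ both lie in $B_{\rho^C}$ one has $\|b\|\le\rho^{C/2}$, so that, for $C$ large enough, the lemma on images of chunks under translations applies and shows that $N_1:=b^{-1}N=a^{-1}gN$ is a chunk of complexity $\rho^{-C}$; likewise $M_1:=a^{-1}M$ is a chunk of complexity $\rho^{-C}$. Both $M_1$ and $N_1$ contain $0$, and $a^{-1}(M\cap gN)=M_1\cap N_1$. The next point is to transport the transversality hypothesis to the common point $0$. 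Because the differential of a defining diffeomorphism of complexity $\rho^{-1}$ is $\rho^{-1}$-Lipschitz and $\|\log a\|=O(\rho^C)$, the tangent space to $M$ at $a$ is within $O(\rho^{C-1})$ of $T_1M$ in the Grassmannian, and left translation by $a^{-1}$, whose differential is $O(\rho^C)$-close to the identity, distorts subspaces by at most $O(\rho^C)$; hence $d(T_0M_1,T_1M)$ and $d(T_0N_1,T_1N)$ are $O(\rho^{C-1})$, and therefore $d(T_0N_1,T_0M_1)\ge d(T_1N,T_1M)-O(\rho^{C-1})\ge\rho/2$ once $C$ is large and $\rho$ small. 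Since also $\dim M_1=d-1$, the pair $(M_1,N_1)$ satisfies the hypotheses of Lemma~\ref{linearchunk}, with $\rho/2$ in place of $\rho$.

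Then I would apply Lemma~\ref{linearchunk} to obtain a diffeomorphism $\phi$ of complexity $\rho^{-C}$ carrying $M_1$ onto a hyperplane $F_M$ and $N_1$ onto a subspace $F_N$, with $F_M$ and $F_N$ forming a square angle. As $M_1\cap N_1=\phi^{-1}(F_M\cap F_N)$, with $F_M\cap F_N$ a linear subspace — hence a chunk — and $\phi^{-1}$ of complexity $\rho^{-C}$, this already shows that $a^{-1}(M\cap gN)$ is a submanifold chunk of complexity $\rho^{-C}$. For the metric estimate, I would put $y:=a^{-1}x$ and use that $L_{a^{-1}}$ is $\rho^{-C}$-bi-Lipschitz on $B_{\rho^C}$ and sends $M$ to $M_1$ and $gN$ to $N_1$, so that it suffices to bound $d(y,M_1\cap N_1)$ by $\rho^{-C}\max\{d(y,M_1),d(y,N_1)\}$; applying the $\rho^{-C}$-bi-Lipschitz map $\phi$ reduces this in turn to the linear inequality $d(z,F_M\cap F_N)\le\sqrt2\,\max\{d(z,F_M),d(z,F_N)\}$ valid for a hyperplane $F_M$ and a subspace $F_N$ forming a square angle. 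That inequality I would check by decomposing $\R^d=F_M^\perp\oplus(F_M\cap F_N)\oplus P$ orthogonally, where $P$ is the orthogonal complement of $F_M^\perp\oplus(F_M\cap F_N)$: the condition $F_M^\perp\subset F_N$ gives $F_N=F_M^\perp\oplus(F_M\cap F_N)$ and $F_M=(F_M\cap F_N)\oplus P$, so writing $z=z_0+z_L+z_P$ in the three summands one gets $d(z,F_M)=\|z_0\|$, $d(z,F_N)=\|z_P\|$ and $d(z,F_M\cap F_N)=\sqrt{\|z_0\|^2+\|z_P\|^2}$. Finally, if $\max\{d(x,M),d(x,gN)\}$ exceeds a fixed power of $\rho$ the inequality to be proved is trivial, because $0\in M\cap gN$ and $B_{\rho^C}$ has diameter absorbed by the factor $\rho^{-C}$; so I may assume this maximum small, which keeps all the points used above inside the balls on which the various reductions are valid.

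The step I expect to be delicate is the bookkeeping of the exponent $C$ in the second paragraph: the hypothesis controls tangent spaces at the identity, whereas Lemma~\ref{linearchunk} needs transversality at the common point of the translated chunks, so I must make sure the perturbation $O(\rho^{C-1})$ coming from the translation and from the variation of the tangent spaces is genuinely negligible compared with $\rho$, which forces $C\ge3$. Everything else reduces to composing diffeomorphisms of bounded complexity and invoking Proposition~\ref{composition}.
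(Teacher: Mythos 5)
Your proof follows the same route as the paper's: left-translate both chunks to the origin using the translation lemma, transport the transversality hypothesis from $T_1$ to $T_0$ (noting the perturbation is polynomially small in $\rho$ and hence negligible compared to $\rho$ once $C$ is large), invoke Lemma~\ref{linearchunk}, and reduce the metric estimate to the square-angle linear case. Your explicit verification of the linear inequality $d(z,F_M\cap F_N)\le\sqrt2\max\{d(z,F_M),d(z,F_N)\}$ and the careful bookkeeping of exponents are details the paper leaves implicit, but the argument is the same.
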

\begin{proof}
Again, we identify a neighborhood of the identity in $G$ with a neighborhood of $0$ in $\g$.
From the previous lemma, for $g\in B_{\rho^C}$ and $a\in M\cap gN\cap B_{\rho^C}$, both $a^{-1}M$ and $a^{-1}gN$ are chunks of complexity $\rho^{-C}$. Moreover, for $a$ and $g$ in $B_{\rho^C}$, we have $d(T_0(a^{-1}M),T_0M)\leq \rho^C$ and $d(T_0(a^{-1}gN),T_0N)\leq \rho^C$. This implies,
$$d(T_0(a^{-1}N),T_0(a^{-1}gM))\geq \rho-2\rho^C\geq\rho^C,$$
provided $C\geq 2$, which we may of course ensure.
Thus, Lemma~\ref{linearchunk} applies, and we may find a diffeomorphism $\theta$ of complexity $\rho^{-C}$ sending $a^{-1}M$ and $a^{-1}gN$ to linear subspaces forming an angle of $\frac{\pi}{2}$. This proves the first part of the lemma. The second part is clearly true when $a$ is the identity  and $M$ and $N$ are linear subspaces of $\g$ forming an angle of $\frac{\pi}{2}$, and we can always reduce to that case, using the above diffeomorphism $\theta$. So we are done.
\end{proof}

If $U$ is a neighborhood of the identity in $G$, we make the following definition.

\begin{definition}[Submanifold of bounded complexity]
A \emph{submanifold $M$ of complexity $\rho^{-1}$ in $U$} is a submanifold of $G$ such that for each point $x$ in $M\cap U$, $x^{-1}M$ is included in a submanifold chunk of complexity $\rho^{-1}$.
\end{definition}

(Note that submanifolds of bounded complexity in $U$ are not necessarily closed subsets.)

\begin{example}
A zero-dimensional submanifold of complexity $\rho^{-1}$ in $U$ is a union of points that are at distance at least $\rho$ from each other. For volume reasons, if $U$ is bounded, the cardinality of a zero-dimensional submanifold of complexity $\rho^{-1}$ in $U$ is $O(\rho^{-d})$.
\end{example}

\subsubsection{Larsen-Pink type estimates in codimension 1}

With the above lemmas at hand, we may now derive the metric Larsen-Pink type estimates that will be used in the proof of the product theorem.

\begin{proposition}[Larsen-Pink type inequality]\label{lp}
Let $G$ be a simple Lie group of dimension $d$.
There exists a neighborhood $U$ of the identity and a constant $C\geq 0$ depending only on $G$ such that the following holds for any $\epsilon>0$ and any $\delta>0$ sufficiently small.\\
Let $A$ be a subset of $U$ that is not included in a neighborhood of size $\delta^\epsilon$ of any closed connected subgroup and suppose $A$ satisfies
$$N(AAA,\delta)\leq\delta^{-\epsilon}N(A,\delta).$$
Let $M$ be a submanifold of positive codimension and complexity at most $\delta^{-\epsilon}$ in $U$. Then,
$$N(A\cap M,\delta) \leq \delta^{-C\epsilon}N(A,\delta)^{1-\frac{1}{d}}.$$
\end{proposition}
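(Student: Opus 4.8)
The proof is the metric incarnation of the Larsen--Pink argument of \cite{breuillardgreentao,pyberszabo}: one shrinks $M$ to a $0$-dimensional set by intersecting it with a flag of translates $gM$, and tracks how the covering number of $A$ restricted to these intersections decays. Write $B:=N(A,\delta)$. Several harmless reductions come first. The centre of $G$ is discrete and the projection onto the adjoint group is a local diffeomorphism near $1$ carrying over all three hypotheses, so we may assume $G$ has trivial centre, making Propositions~\ref{almoststab} and~\ref{escape} available; since $N(A\cap M,\delta)\le N(A\cap M',\delta)$ for $M\subseteq M'$ and a positive-codimension submanifold chunk of complexity $\delta^{-\epsilon}$ enlarges locally to one of dimension exactly $d-1$ (which only increases the left-hand side), we may assume $\dim M=d-1$; adjoining $1$ to $A$ affects no hypothesis up to a factor $\delta^{-O(\epsilon)}$, so we may assume $1\in A$; and by a Pl\"unnecke--Ruzsa type inequality the assumption $N(AAA,\delta)\le\delta^{-\epsilon}B$ upgrades to $N(A^{k},\delta)\le\delta^{-O_{k}(\epsilon)}B$ for every fixed $k$ --- this, and nothing else, is how the tripling hypothesis is used.

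The core step is to produce the flag. Working in a fixed representation of $G$ on an exterior power of $\g$ --- a Pl\"ucker-type representation in which the tangent space at a point of a submanifold chunk is encoded (up to scaling) by a vector, and left translation by $g$ corresponds to the action of $g$ --- one uses Proposition~\ref{escape} to find elements $g_1,\dots,g_{d-1}$ in a bounded power $A^{O(1)}$ such that the successive intersections $M_j:=M\cap g_1M\cap\cdots\cap g_jM$ are, near each of their points, submanifold chunks of complexity $\delta^{-O(\epsilon)}$ and codimension exactly $j+1$. This is proved by induction on $j$, the inductive step applying Lemma~\ref{intersection} with the codimension-one chunk $g_jM$ playing the role of ``$M$'' and $M_{j-1}$ that of ``$N$'', once one knows $g_jM$ is $\delta^{O(\epsilon)}$-transverse to $M_{j-1}$; and the latter is exactly what escape from subvarieties delivers, because the set of $g$ for which $gM$ is \emph{not} $\delta^{O(\epsilon)}$-transverse to $M_{j-1}$ is, after the above linearization, contained in a proper subvariety, which $A^{O(1)}$ avoids since $A$ is $\delta^{\epsilon}$-away from every closed connected subgroup. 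In particular $M_{d-1}$ is $0$-dimensional of complexity $\delta^{-O(\epsilon)}$, so $N\big(A^{O(1)}\cap M_{d-1},\delta\big)\le\delta^{-O(\epsilon)}$.

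It remains to convert this into a bound on $N(A\cap M,\delta)$, and here the exponent $1-\tfrac1d$ appears through a $d$-linear count. Let $P$ be a maximal $\delta$-separated subset of $A\cap M$, $|P|\simeq N(A\cap M,\delta)=:N$. The substitution $h_i:=aa_i^{-1}$ identifies $P^{d}$ with the set of pairs consisting of $a\in P$ and a tuple $(h_1,\dots,h_{d-1})$ with $h_i\in PP^{-1}\subseteq A^{O(1)}$ (so with at most $\delta^{-O(\epsilon)}B$ possible values each, up to $\delta$) and $h_i^{-1}a\in P$ for all $i$. For a tuple $(h_i)$ whose translates $h_1M,\dots,h_{d-1}M$ are $\delta^{O(\epsilon)}$-transverse to $M$ and to one another, the set of admissible $a$ lies in $A\cap M\cap h_1M\cap\cdots\cap h_{d-1}M$, a $0$-dimensional chunk with at most $\delta^{-O(\epsilon)}$ points; so such tuples contribute at most $(\delta^{-O(\epsilon)}B)^{d-1}\cdot\delta^{-O(\epsilon)}=\delta^{-O(\epsilon)}B^{d-1}$ to $N^{d}$. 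The non-transverse tuples contribute at most $N$ times their number, and that number is controlled by peeling off, stage by stage, the values of $h_i$ lying near the proper ``non-transverse'' subvariety attached to the current partial intersection --- on which, by the escape mechanism, $A^{O(1)}$ again cannot concentrate. Hence $N^{d}\le\delta^{-O(\epsilon)}B^{d-1}$, which is the claim.

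The main obstacle is the control of the non-transverse contribution in this last step. On the one hand one must check that the ``non-transverse locus'' attached to a partial intersection is honestly a proper subvariety and linearize it so that Proposition~\ref{escape} --- phrased in terms of subgroups and linear representations --- can be invoked; restricting to submanifolds of codimension one (rather than general subvarieties) is what keeps the complexity estimates stable under the repeated use of Lemma~\ref{intersection}. On the other hand, bounding the number of non-transverse tuples forces one to know that $A^{O(1)}$ does not concentrate near these lower-complexity subvarieties either, so the estimate has to be organized as a downward induction on $\dim M$, with the trivial bound $N(A\cap M_{d-1},\delta)\le\delta^{-O(\epsilon)}$ for $0$-dimensional $M_{d-1}$ as base case. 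Everything else --- the Pl\"unnecke--Ruzsa bookkeeping, the pigeonhole, the passage from chunks to submanifolds --- is routine once these non-concentration facts are in place.
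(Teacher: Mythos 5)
Your proposal captures the right ingredients (escape, chunk complexity, intersection drops codimension, a multilinear count) but departs from the paper's argument in a way that creates a genuine gap, which you yourself flag in the last paragraph without resolving.

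The paper does not run a $d$-linear count at once. It proves a \emph{bilinear} inequality (Lemma~\ref{inter}): given two chunks $M$ and $N$, one first \emph{pre-arranges} transversality by conjugating $N$ by an element $a_1\in A$ produced by Proposition~\ref{escape} (so that $T_1M'$ and $T_1N'$ are $\delta^{\epsilon}$-transverse), and only then pigeonholes the map $(x,y)\mapsto xy^{-1}$ over a $\delta$-cover of $A^6$. The $g$ that the pigeonhole selects lies in $B(1,\delta^{C_0\epsilon})$, so $gN'$ automatically stays transverse to $M'$; Lemma~\ref{intersection} then places $M'\cap (gN')^{(\delta)}$ inside the $\delta^{1-O(\epsilon)}$-neighbourhood of a lower-dimensional chunk $P$, giving $N(A^{(\delta^{1-O(\epsilon)})}\cap P,\delta^{1-O(\epsilon)})\cdot N(A^6,\delta)\geq \delta^{O(\epsilon)}N(A\cap M,\delta)N(A\cap N,\delta)$. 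Iterating this inequality $d-1$ times on the pair $(M,M_k)$, taking the product of the resulting chain and cancelling the intermediate covering numbers telescopes directly to $N(A^6,\delta)^{d-1}\geq\delta^{O(\epsilon)}N(A\cap M,\delta)^d$, with the zero-dimensional endpoint $M_d$ contributing only $\delta^{-O(\epsilon)}$. At no stage is there a ``non-transverse'' contribution to estimate, because transversality is a fixed property of the two chunks arranged before the count begins, not a property of a random tuple.

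By contrast, in your $d$-linear count over $(a,h_1,\dots,h_{d-1})$ with $h_i=aa_i^{-1}\in PP^{-1}$, transversality becomes a condition on the random tuple $(h_i)$, and you must bound the contribution of non-transverse tuples. This is where the argument breaks. Proposition~\ref{escape} produces \emph{one} element of $A^{O(1)}$ away from a given subvariety; it says nothing about how many elements of $A^{O(1)}$ lie near it. The bound you actually need is a non-concentration estimate for $A^{O(1)}$ near the (codimension-one) non-transverse locus of each partial flag, i.e.\ precisely a Larsen--Pink type inequality for a product set of $A$. Applying it requires the very proposition being proved (for $A^{O(1)}$, whose tripling and away-from-subgroups hypotheses coincide with those of $A$ up to $\delta^{-O(\epsilon)}$ by Pl\"unnecke--Ruzsa), so the ``downward induction on $\dim M$'' you suggest is circular: the base case is fine, but the inductive step for a codimension-one $M$ already invokes the codimension-one statement for a different submanifold. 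This is not a cosmetic issue: a crude count of non-transverse tuples, even granting an LP-type bound $\delta^{-O(\epsilon)}B^{1-1/d}$ on bad $h_j$'s, returns $N\cdot B^{d-2}\cdot B^{1-1/d}$, which only closes if $N\leq \delta^{-O(\epsilon)}B^{1/d}$ --- a \emph{stronger} bound than the conclusion. The bilinear pigeonhole in the paper's Lemma~\ref{inter} is exactly the device that sidesteps this, by making transversality a prearranged soft property rather than an event to be counted.
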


The proposition will follow from repeated application of the following lemma.

\begin{lemma}\label{inter}
Let $G$ be a simple Lie group of dimension $d$.
There exists a neighborhood $U$ of the identity in $G$ and a constant $C$ depending on $G$ only such that the following holds for any $\epsilon>0$ and any $\delta>0$ small enough (depending on $\epsilon$).\\
Let $A$ be a symmetric subset of $U$ such that for any unit vector $v$ in $\g$ and for any hyperplane $W< \g$, there exists $a$ in $A$ such that $d((\Ad a)v,W)\geq \delta^\epsilon$.\\
Suppose $M$ and $N$ are two submanifolds of complexity $\delta^{-\epsilon}$ in $U$ and with respective dimensions $d-1$ and $n$. Then there exists a submanifold $P$ of dimension $n-1$ and complexity $\delta^{-C\epsilon}$ in $U$ such that
$$N(A^{(\delta^{1-C\epsilon})}\cap P,\delta^{1-C\epsilon})\cdot N(A^6,\delta) \geq \delta^{C\epsilon} N(A\cap M,\delta)\cdot N(A\cap N,\delta).$$
\end{lemma}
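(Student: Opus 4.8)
The plan is a multiplicative‑energy argument: pass from the product $N(A\cap M,\delta)\,N(A\cap N,\delta)$ to a single fibre of a difference map, realise that fibre as $A$‑mass lying $\delta^{1-C\epsilon}$‑close to a single translate of the form $Mg\cap aN$, and invoke Lemma~\ref{intersection} to see that this translate is an $(n-1)$‑dimensional submanifold of complexity $\delta^{-C\epsilon}$. The hypothesis on $\Ad A$ enters exactly to guarantee that the translate may be taken transverse. Concretely, I would first fix near‑maximal $\delta$‑separated subsets $B\subset A\cap M$ and $D\subset A\cap N$, so that $|B|\simeq N(A\cap M,\delta)$ and $|D|\simeq N(A\cap N,\delta)$; then, pigeonholing at the cost of a factor $\delta^{-C\epsilon}$, shrink $B$ and $D$ so that they lie in balls of radius $\delta^{C\epsilon}$ around points $x_{0}\in A\cap M$ and $y_{0}\in A\cap N$. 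Since $M$ and $N$ have complexity $\delta^{-\epsilon}$, on these small balls the ``left‑translated tangents'' $\widetilde{V}(m):=T_{1}(Mm^{-1})$ and $\widetilde{W}(n):=T_{1}(Nn^{-1})$ stay within $\delta^{C\epsilon}$ of fixed subspaces: a hyperplane $\widetilde{V}_{0}$ and an $n$‑plane $\widetilde{W}_{0}$. Choosing a unit vector $w\in\widetilde{W}_{0}$ (possible since $\dim N=n\ge1$) and applying the hypothesis with $v=w$ and $W=\widetilde{V}_{0}$, I obtain $a\in A$ with $d(\Ad(a)w,\widetilde{V}_{0})\ge\delta^{\epsilon}$, whence $d(\Ad(a)\widetilde{W}_{0},\widetilde{V}_{0})\gg\delta^{C\epsilon}$.

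Next, for $m\in B$ and $n\in D$ set $g:=m^{-1}an\in A^{3}$, so that $n':=an=mg$ lies in $Mg\cap aN$. The point is that transversality of $Mg$ and $aN$ at $n'$ is governed by $\Ad$: because left and right translations commute, $dL_{n'^{-1}}\circ dR_{g}\circ dL_{m}=\Ad(g^{-1})$, so the left‑translated tangent hyperplane of $Mg$ at $n'$ is $\Ad(g)^{-1}V(m)$ with $V(m)=T_{1}(m^{-1}M)$, while that of $aN$ at $n'$ is simply $W(n)=T_{1}(n^{-1}N)$. Hence $Mg$ and $aN$ meet at $n'$ at angle $\gg\delta^{C\epsilon}$ precisely when $d(\Ad(g)W(n),V(m))\gg\delta^{C\epsilon}$, and since $\Ad(g)W(n)=\Ad(m)^{-1}\Ad(a)\widetilde{W}(n)$ and $V(m)=\Ad(m)^{-1}\widetilde{V}(m)$, using boundedness of $\Ad$ on $A^{3}$ together with $\widetilde{W}(n)\approx\widetilde{W}_{0}$, $\widetilde{V}(m)\approx\widetilde{V}_{0}$, this reduces to the inequality $d(\Ad(a)\widetilde{W}_{0},\widetilde{V}_{0})\gg\delta^{C\epsilon}$ already secured. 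Thus $a^{-1}Mg$ is $\delta^{C\epsilon}$‑transverse to $N$ near the relevant points, and Lemma~\ref{intersection} — together with the translation and conjugation lemmas, which handle the non‑small $g\in A^{3}$ ($Mg$ being, locally, a conjugate of a translate of $M$, hence of complexity $\delta^{-C\epsilon}$) — yields that $P:=a^{-1}Mg\cap N$ is a submanifold of dimension $(d-1)+n-d=n-1$ and complexity $\delta^{-C\epsilon}$ in $U$, with the property that any point that is $O(\delta)$‑close to $a^{-1}Mg$ and lies on $N$ is $O(\delta^{1-C\epsilon})$‑close to $P$.

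To finish, I would count. As $(m,n)$ runs over $B\times D$, the element $g=m^{-1}an$ runs over a subset of $A^{3}$ of $\delta$‑covering number at most $N(A^{3},\delta)\le N(A^{6},\delta)$, while $m$ is determined up to $O(\delta)$ by $n$ and $g$; so some value of $g$ is attained on a set $D_{g}\subset D$ with $|D_{g}|\gg\delta^{C\epsilon}\,|B|\,|D|/N(A^{6},\delta)$. For each $n\in D_{g}$ one has $an=n'\in(Mg)^{(O(\delta))}$, i.e.\ $n\in(a^{-1}Mg)^{(O(\delta))}$; as $n$ also lies on $N$ and in $A$, the previous paragraph gives $n\in A^{(\delta^{1-C\epsilon})}\cap P^{(\delta^{1-C\epsilon})}$. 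Since $P$ has complexity $\delta^{-C\epsilon}$, removing the fattening of $P$ and coarsening the scale to $\delta^{1-C\epsilon}$ cost only factors $\delta^{-C\epsilon}$, so
$$N(A^{(\delta^{1-C\epsilon})}\cap P,\delta^{1-C\epsilon})\cdot N(A^{6},\delta)\gg\delta^{C\epsilon}\,|D_{g}|\,N(A^{6},\delta)\gg\delta^{C\epsilon}\,N(A\cap M,\delta)\,N(A\cap N,\delta),$$
which is the assertion after adjusting $C$.

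The step I expect to be the main obstacle is the transversality one: one must set up the tangency condition for $Mg$ and $aN$ in a single chart of complexity $\delta^{-C\epsilon}$ and verify the $\Ad(g)^{-1}$ formula there, check that restricting to $\delta^{C\epsilon}$‑balls genuinely freezes the tangent data to within $\delta^{C\epsilon}$ (this is exactly where the complexity bounds on $M$ and $N$ are used), and keep the many $\delta^{-C\epsilon}$‑type losses — the small‑ball restriction, the passage between $W$ and $\Ad(a)W$ of non‑unit norm, the fattening and rescaling of $P$, and the covering‑number bookkeeping (including $N(A^{3},\delta)\le N(A^{6},\delta)$) — under control so that the final constant $C$ depends on $G$ alone. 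By comparison, showing that translates and intersections of bounded‑complexity submanifolds under the non‑small elements $g\in A^{3}$ remain of bounded complexity is routine given the lemmas already established.
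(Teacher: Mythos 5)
Your proposal is correct and follows essentially the same route as the paper: pigeonhole both $A\cap M$ and $A\cap N$ into small balls, use the hypothesis on $\Ad A$ to force a quantitatively transverse configuration, pass to a single fibre of a product map, and identify that fibre as an $(n-1)$-dimensional submanifold of bounded complexity via Lemma~\ref{intersection}. The only substantive difference is bookkeeping: the paper first normalises both manifolds to pass through the identity (setting $M'=a^{-1}M$ and, for transversality, $N'=a_1b^{-1}Na_1^{-1}$) so that the pigeonholed translate $g$ in $\psi(x,y)=xy^{-1}$ is \emph{small} and Lemma~\ref{intersection} applies directly, whereas you keep $M$, $N$ unnormalised and intersect $a^{-1}Mg$ with $N$ for a non-small $g=m^{-1}an\in A^3$, which requires (as you note) an extra, routine check that right-translation and conjugation by bounded elements of $A^3$ preserve bounded complexity.
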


Recall that for any set $S$, we denote by $S^{(\rho)}$ the $\rho$-neighborhood of $S$, which is not to be confused with the $k$ product set, denoted $A^k$.

\begin{proof}
Let $C_0$ be the constant given by Lemma~\ref{intersection}.
As $U$ can be covered by $O(\delta^{-dC_0\epsilon})$ balls of radius $\frac{\delta^{C_0\epsilon}}{8}$ we may find $a$ in $A\cap M$ such that
$$N(A\cap M\cap B(a,\frac{\delta^{C_0\epsilon}}{4}),\delta) \gg \delta^{dC_0\epsilon} N(A\cap M, \delta).$$
Similarly, we may find $b$ in $A\cap N$ such that
$$N(A\cap N\cap B(b,\frac{\delta^{C_0\epsilon}}{4}),\delta) \gg \delta^{dC_0\epsilon} N(A\cap N, \delta).$$
Let $M'=a^{-1}M$. By the assumption on $A$, there exists $a_1$ in $A$ such that $d((\Ad a_1)T_1b^{-1}N,T_1M') \geq \delta^\epsilon$, and we let $N'= a_1 b^{-1}N a_1^{-1}$, so that $M'$ and $N'$ are submanifold chunks of complexity $\delta^{-C\epsilon}$ satisfying
\begin{itemize}
\item $N(a^{-1}A\cap M'\cap B(1,\frac{\delta^{C_0\epsilon}}{4}),\delta) \geq \delta^{O(\epsilon)}N(A\cap M,\delta)$
\item $N(a_1b^{-1}Aa_1^{-1}\cap N'\cap B(1,\frac{\delta^{C_0\epsilon}}{4}),\delta) \geq \delta^{O(\epsilon)}N(A\cap N,\delta)$
\item $\dim M'= d-1$ and $\dim N'=n$
\item $d(T_1N', T_1M') \geq \delta^{\epsilon}$.
\end{itemize}
Consider the map
$$
\begin{array}{lccc}
\psi: & a^{-1}A\cap M' \times a_1b^{-1}Aa_1^{-1}\cap N' & \longrightarrow & A^6\\
& (x,y) & \longmapsto & xy^{-1}
\end{array}
$$
Let $X$ and $Y$ be maximal $\delta$-separated subsets of $a^{-1}A\cap M'\cap B(1,\frac{\delta^{C_0\epsilon}}{4})$ and $a_1b^{-1}Aa_1^{-1}\cap N'\cap B(1,\frac{\delta^{C_0\epsilon}}{4})$ respectively, so that
$$\card X\times Y \geq \delta^{O(\epsilon)}N(A\cap M,\delta)N(A\cap N,\delta).$$
Take a cover $\mathcal{B}$ of $A^6$ by balls of radius $\delta$ such that
$$\card \mathcal{B} = N(A^6,\delta).$$
Counting points of $X\times Y$ according to their image under $\psi$, we find
\begin{align*}
\card X\times Y & = \sum_{B\in\mathcal{B}} \card\{ (x,y)\in X\times Y \,|\, xy^{-1}\in B\}\\
& \leq N(A^6,\delta)\cdot \max_{B\in\mathcal{B}} \card\{ (x,y)\in X\times Y \,|\, xy^{-1}\in B\}
\end{align*}
so that for some $g$ in the image of $\psi$, with $g\in B(1,\delta^{C_0\epsilon})$,
$$\card X\times Y \leq N(A^6,\delta)\cdot \card\{ (x,y)\in X\times Y \,|\, d(xy^{-1},g)\leq \delta\}.$$
However, as $X$ and $Y$ are $\delta$-separated,
\begin{align*}
\card\{ (x,y)\in X\times Y \,|\, d(xy^{-1},g)\leq \delta\} & \ll \card\{ x\in X \,|\, x\in (gN')^{(\delta)}\cap M'\}\\
& \leq N(a^{-1}A\cap M'\cap (gN')^{(\delta)},\delta)
\end{align*}
Now, from Lemma~\ref{intersection}, the intersection $M'\cap (gN')^{(\delta)}$ is included in the $\delta^{1-C_0\epsilon}$-neighborhood of a submanifold chunk $P_0$ of complexity $\delta^{-C_0\epsilon}$ for which we therefore have
\begin{align*}
N(a^{-1}A\cap P_0^{(\delta^{1-O(\epsilon)})},\delta) N(A^6,\delta) & \leq \delta^{-O(\epsilon)} N(A\cap M,\delta)\cdot N(A\cap N, \delta).
\end{align*}
To conclude, it suffices to take $P=aP_0$ and to note that for any set $S$,
\begin{equation}\label{presque}
N(S,\delta)\leq \delta^{-O(\epsilon)}N(S,\delta^{1-\epsilon}).
\end{equation}
\end{proof}

\begin{proof}[Proof of Proposition~\ref{lp}]
We may assume without loss of generality that $G$ has trivial center.
Any submanifold of positive codimension and complexity at most $\delta^{-\epsilon}$ is included in a submanifold of dimension $d-1$ and complexity at most $\delta^{-\epsilon}$, so it suffices to prove the proposition in the case $M$ has codimension 1.\\
The set $A$ is $\delta^\epsilon$-away from subgroups, so that from Proposition~\ref{escape} and Remark~\ref{irred} applied to the adjoint representation, there exists a constant $C$ such that the set $A'= (A\cup A^{-1}\cup \{1\})^C$ satisfies the hypotheses of Lemma~\ref{inter} (with $\epsilon$ replaced by $C\epsilon$). From Ruzsa's inequality (see Tao~\cite[Theorem~6.8]{taoestimates}), we have
$$N(A',\delta) \leq \delta^{-O(\epsilon)} N(A,\delta)$$
and therefore, it suffices to prove the proposition for the set $A'$. In other terms, we may assume that $A$ satisfies the hypothesis of Lemma~\ref{inter}.
Let $M$ be a $(d-1)$-dimensional submanifold of $U$ of complexity at most $\delta^{-\epsilon}$. We apply Lemma~\ref{inter} to the pair of manifolds $(M,M)$, thereby obtaining a submanifold $M_2$ of codimension $2$ and complexity $\delta^{-O(\epsilon)}$ such that
$$N(A^{(\delta^{1-O(\epsilon)})}\cap M_2,\delta^{1-O(\epsilon)})
\cdot N(A^6,\delta)
\geq
\delta^{O(\epsilon)}N(A\cap M,\delta) N(A\cap M,\delta).$$
Now, apply Lemma~\ref{inter} again, to the set $A^{(\delta^{1-O(\epsilon)})}$ and to the pair of manifolds $(M,M_2)$, at scale $\delta^{1-O(\epsilon)}$. This yields a manifold $M_3$ of codimension $3$ and complexity $\delta^{-O(\epsilon)}$ such that
\begin{IEEEeqnarray*}{rCl}
\IEEEeqnarraymulticol{3}{l}{
N(A^{(\delta^{1-O(\epsilon)})}\cap M_3,\delta^{1-O(\epsilon)})
\cdot N((A^{(\delta^{1-O(\epsilon)})})^6,\delta)}\\
\qquad\qquad\qquad\qquad\qquad & \geq &
\delta^{O(\epsilon)}N(A^{(\delta^{1-O(\epsilon)})}\cap M,\delta^{1-O(\epsilon)}) N(A^{(\delta^{1-O(\epsilon)})}\cap M_2,\delta).
\end{IEEEeqnarray*}
Then repeat this procedure $d-1$ times to obtain at the end a zero-dimensional submanifold $M_d$ of complexity $\delta^{-O(\epsilon)}$ such that
\begin{IEEEeqnarray*}{rCl}
\IEEEeqnarraymulticol{3}{l}{
N(A^{(\delta^{1-O(\epsilon)})}\cap M_d,\delta^{1-O(\epsilon)})
\cdot N((A^{(\delta^{1-O(\epsilon)})})^6,\delta)}\\
\qquad\qquad\qquad\qquad\qquad & \geq &
\delta^{O(\epsilon)}N(A^{(\delta^{1-O(\epsilon)})}\cap M,\delta^{1-O(\epsilon)}) N(A^{(\delta^{1-O(\epsilon)})}\cap M_{d-1},\delta).
\end{IEEEeqnarray*}
Taking the product of all the obtained inequalities and making the obvious simplifications, we get
$$N(A^{(\delta^{1-O(\epsilon)})}\cap M_d,\delta^{1-O(\epsilon)})N((A^{(\delta^{1-O(\epsilon)})})^6,\delta)^{d-1} \geq \delta^{O(\epsilon)}
N(A\cap M,\delta^{1-O(\epsilon)})^d.$$
However, $M_d$ being a zero-dimensional submanifold of complexity $\delta^{-O(\epsilon)}$, it is a finite set of cardinality at most $\delta^{-O(\epsilon)}$ and therefore,
$$N((A^{(\delta^{1-O(\epsilon)})})^6,\delta)^{d-1} \geq \delta^{O(\epsilon)}
N(A\cap M,\delta^{1-O(\epsilon)})^d,$$
from which one readily concludes, using once more Rusza's inequality and the trivial inequality (\ref{presque}), that
$$N(A\cap M,\delta) \leq \delta^{-O(\epsilon)}N(A^6,\delta)^{1-\frac{1}{d}} \leq \delta^{-O(\epsilon)} N(A,\delta)^{1-\frac{1}{d}}.$$
\end{proof}

\section{Proof of the product theorem}\label{expansion}

\subsection{Rich torus}

The starting point of the proof of the product theorem is the following: from a small tripling set $A$, find a maximal torus whose $\delta$-neighborhood contains many elements of $A$.
For that, we first show that some product set of $A$ contains a very regular element.
Recall that an element $g$ in $G$ is called \emph{regular} (or \emph{regular semisimple}) if the multiplicity of the eigenvalue $1$ in the matrix representation $\Ad g$ is minimal. If $g$ is not regular, we will call it \emph{singular}.
We denote by $\mathcal{S}$ the set of singular elements of $G$, i.e.
$$\mathcal{S}=\{x\in G\,|\, \mbox{the multiplicity of}\ 1\ \mbox{as an eigenvalue of}\ \Ad x\ \mbox{is not minimal}\}.$$

\begin{lemma}\label{aregular}
Let $G$ be a simple Lie group, and denote by $\mathcal{S}$ the set of singular elements in $G$. There exists a neighborhood $U$ of the identity in $G$ and a constant $C$ such that the following holds.\\
If $A\subset U$ is $\rho$-away from subgroups, then there exists an element $a\in A^C$ such that $d(a,\mathcal{S})\geq \rho^C$.
\end{lemma}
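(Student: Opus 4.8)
The goal is to escape from the singular set $\mathcal{S}$, which is a proper algebraic subvariety of $G$ (since the regular elements form a Zariski-open dense set). My plan is to linearize this escape problem via a suitable representation of $G$ and apply Proposition~\ref{escape}. First I would reduce to the adjoint case: $G$ acts on $\g$ by $\Ad$, and whether $g$ is regular is detected by the characteristic polynomial of $\Ad g$. Writing $\chi_{\Ad g}(t) = \sum_{k=0}^d c_k(g)\, t^k$, regularity of $g$ means that the coefficients $c_0(g), \dots, c_{r-1}(g)$ all vanish and $c_r(g)\neq 0$, where $r$ is the minimal multiplicity (the rank of $\g$). Thus $\mathcal{S}$ is the vanishing locus of the single polynomial function $c_r$ on $G$ (intersected with the locus where the lower coefficients vanish, but on a neighborhood of the identity all the relevant lower coefficients vanish identically, since the eigenvalue $1$ has multiplicity $\geq r$ everywhere — the identity itself is "as singular as possible" only in the sense of the fixed space of $\Ad$ having dimension $\geq r$; one should check that near $1$ the multiplicity is exactly controlled by $c_r$). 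So $d(g,\mathcal{S})$ is comparable, by a \L ojasiewicz inequality as in the proof of Proposition~\ref{almoststab}, to a power of $|c_r(g)|$.

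Next I would realize $c_r$ as a matrix coefficient of a polynomial representation of $G$. The function $g\mapsto c_r(g)$ is, up to sign, the trace of $\Lambda^{d-r}(\Ad g - I)$ restricted appropriately, hence a polynomial in the entries of $\Ad g$; equivalently it is a matrix coefficient of $G$ acting on some space built from tensor powers of $\g$ and its dual. Concretely, take $V$ to be a finite-dimensional representation of $G$ (a subrepresentation of a tensor power of $\g\oplus\g^*$) containing a vector $v$ and a functional $\lambda$ with $\langle \lambda, g\cdot v\rangle = c_r(g)$ for all $g$. Decompose $V$ into irreducibles; then $c_r$ being a nonzero function (it is nonzero at any regular element, and regular elements exist arbitrarily close to $1$ — e.g. $\exp$ of a regular element of $\g$) means $v$ is not annihilated and $\lambda$ is nonzero on the relevant components. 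Applying Proposition~\ref{escape} (together with Remark~\ref{irred} on each irreducible piece, or more simply: the hypothesis $1\in A$ and $d(x\cdot v, \ker\lambda)\geq c$ for some $x\in U$ and some constant $c$, which holds because $c_r$ does not vanish on a set of the form $x\cdot v$ for $x$ ranging over $U$), we get $a\in A^\ell$ with $d(a\cdot v, \ker\lambda) \geq \rho^{C}$, hence $|c_r(a)| = |\langle\lambda, a\cdot v\rangle| \gg \rho^C$, hence $d(a,\mathcal{S})\geq \rho^{C'}$ by \L ojasiewicz. Since $A$ is symmetric-up-to-passing-to-$A\cup A^{-1}\cup\{1\}$ and we only enlarge by a bounded power, replacing $A$ by $(A\cup A^{-1}\cup\{1\})$ first (absorbing the constant into $C$) lets us assume $1\in A$ as required by Proposition~\ref{escape}.

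The main obstacle I anticipate is the bookkeeping around the hypothesis of Proposition~\ref{escape} concerning the pair $(v,W)$: one must produce a constant $c>0$, depending only on $G$, and a point $x\in U$ with $d(x\cdot v, \ker\lambda)\geq c$. This is exactly the content of Remark~\ref{irred} once we know $c_r$ does not vanish identically on $U$ — and it does not, since arbitrarily close to $1$ there are regular elements (the set of singular elements of $\g$ is a proper subvariety, so $\exp$ of a generic small vector is regular, and $c_r$ of such an element is nonzero). A second, more technical point is verifying that on a neighborhood of the identity the singular set $\mathcal{S}$ is genuinely cut out by $c_r$ alone, i.e. that the lower coefficients $c_0,\dots,c_{r-1}$ vanish identically there; this follows because every element of $G$ has $\geq r$ eigenvalues equal to $1$ (the rank is the generic, hence minimal, multiplicity) so $c_0=\dots=c_{r-1}\equiv 0$ on all of $G$, which makes $\mathcal{S} = \{c_r = 0\}$ globally. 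Everything else — the passage from $A$ $\rho$-away-from-subgroups to the hypotheses needed, and the final conversion of a lower bound on $|c_r(a)|$ into a lower bound on $d(a,\mathcal{S})$ — is routine given the \L ojasiewicz inequality already invoked earlier in the section.
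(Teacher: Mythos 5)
Your approach is essentially the paper's: linearize the escape from $\mathcal{S}$ by regarding a polynomial vanishing on $\mathcal{S}$ as a vector in a finite-dimensional representation of $G$, apply Proposition~\ref{escape} to push it away from the hyperplane of polynomials vanishing at the identity, and convert the resulting lower bound on $|P(a)|$ into a lower bound on $d(a,\mathcal{S})$. The paper streamlines two of your steps: it takes $P$ to be \emph{any} polynomial vanishing on $\mathcal{S}$ but not on $U$ (possible since $\mathcal{S}$ is a proper subvariety, sparing you the verification that $\mathcal{S}=\{c_r=0\}$ and the tensor-power construction of $V$ --- one can simply use the subrepresentation of $\C[x_{ij}]$ generated by $P$ with $W=\{Q:Q(1)=0\}$), and the final conversion uses only that $P$ is Lipschitz on $U$ and vanishes on $\mathcal{S}$, so no appeal to \L{}ojasiewicz is needed in this direction.
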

\begin{proof}
We may assume without loss of generality that $G$ has trivial center, and view it as a subvariety of $\mathcal{M}_n(\C)$, the $n$ by $n$ matrices over $\mathbb{C}$. Then, $U$ is chosen as in Proposition~\ref{almoststab}.
The set $\mathcal{S}$ is a proper algebraic subvariety of $G$, so we may choose a polynomial $P$ that vanishes on $\mathcal{S}$, but not on $U$. Let $V<\C[{x_{ij}}{1\leq i,j\leq n}]$ be the finite-dimensional subrepresentation of $G$ generated by $P$, and $W=\{Q\in V\,|\, Q(1)=0\}$. Taking $c=\sup_{g\in U} |P(g)|$, we may apply Proposition~\ref{escape} and find $a\in A^C$ such that $d(a\cdot P,W)\geq\rho^C$, i.e. $|P(a)|\geq\rho^C$. As $P$ is a Lipschitz function on $U$, this certainly implies that $d(a,\mathcal{S})\geq\rho^C$ (again $C$ may have increased from one line to the other).
\end{proof}

From now on, we will restrict attention to a bounded neighborhood $U$ of the identity in which Proposition~\ref{lp} and Lemma~\ref{aregular} hold.

\begin{lemma}
There exists a constant $C\geq 0$ depending only on $G$ such that the following holds for any $\rho\in(0,\frac{1}{2})$.
Let $a$ in $U$ be an element such that $d(a,\mathcal{S})\geq\rho$. Then, the conjugacy class $C_a$ of $a$ is a submanifold of complexity at most $\rho^{-C}$ in $U$.
\end{lemma}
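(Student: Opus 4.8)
The plan is to show that the conjugacy class $C_a$, near any of its points, looks like a translate of a chunk of complexity $\rho^{-C}$. Since the map $g \mapsto gag^{-1}$ is constant along left cosets of the centralizer $Z_G(a)$, it factors through an embedding of $G/Z_G(a)$ into $G$, and the image is the orbit. The key quantitative input is that the derivative of the orbit map at the identity is the linear map $\g \to \g$, $X \mapsto X - (\Ad a)X = (\id - \Ad a)X$, whose restriction to a complement of $\ker(\id - \Ad a) = \Lie Z_G(a)$ is invertible. The condition $d(a,\mathcal{S}) \geq \rho$ is precisely what gives a quantitative lower bound: the eigenvalue $1$ of $\Ad a$ has exactly the minimal multiplicity $r := \rk G$, and staying $\rho$-away from $\mathcal{S}$ forces all the other eigenvalues of $\Ad a$ to stay $\rho^{O(1)}$-away from $1$, so that $(\id - \Ad a)$ restricted to the sum of the corresponding generalized eigenspaces is $\rho^{-O(1)}$-bi-Lipschitz.

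First I would fix, once and for all, a $G$-invariant complement picture: decompose $\g = \g_a \oplus \m_a$ where $\g_a = \ker(\id - \Ad a)^n$ is the generalized $1$-eigenspace of $\Ad a$ (of dimension exactly $r$ since $d(a,\mathcal S)\geq\rho$ means $a$ is regular) and $\m_a$ its $(\Ad a)$-invariant complement. I would first argue that $\g_a$ and $\m_a$ depend on $a$ in a $\rho^{-C}$-Lipschitz way and that $(\id - \Ad a)|_{\m_a}$ is $\rho^{-C}$-bi-Lipschitz; this is a standard perturbation/resultant estimate, using that the characteristic polynomial of $\Ad a$ has its non-$1$ roots bounded away from $1$ by $\rho^{O(1)}$ (a Łojasiewicz-type or direct resultant bound, since $\mathcal S$ is cut out by the condition that this separation degenerates). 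Then I would write the orbit map $\phi: \g \supset O \to G$, $\phi(X) = (\exp X)\, a\, (\exp X)^{-1} a^{-1}$, whose differential at $0$ is $\id - \Ad a$; restricting to $\m_a$ and using the Quantitative Inverse Function Theorem (Theorem~\ref{quantitative}) exactly as in the construction of chunks, $\phi|_{\m_a}$ is a diffeomorphism of complexity $\rho^{-C}$ from $B_{\rho^C} \cap \m_a$ onto a chunk containing the identity; since $C_a = a \cdot (\text{this orbit of } a a^{-1})$, i.e. $a^{-1}C_a$ near $1$ is this chunk, we get that $C_a$ is, near $a$, a translate of a chunk of dimension $d - r$ and complexity $\rho^{-C}$.

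To finish I must check the defining property of a submanifold of bounded complexity in $U$: for \emph{every} point $x \in C_a \cap U$, $x^{-1}C_a$ lies in a chunk of complexity $\rho^{-C}$. But any $x \in C_a$ is itself $\rho^{O(1)}$-away from $\mathcal S$ — conjugacy preserves $\Ad$-eigenvalues exactly, and on the bounded set $U$ the distance to the (conjugation-invariant) variety $\mathcal S$ is comparable for conjugate elements up to a Lipschitz constant, so $d(x,\mathcal S) \geq \rho^{C}$ — hence the same construction applies verbatim at $x$, writing $C_a = C_x$ and using $\phi_x(X) = (\exp X) x (\exp X)^{-1} x^{-1}$. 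This gives $x^{-1}C_a = $ (chunk of complexity $\rho^{-C}$) near $x$, which is exactly the definition, after absorbing the uniform Lipschitz constants of $U$ into $C$.

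The main obstacle is the quantitative perturbation step: turning the qualitative statement ``$a \notin \mathcal S \Rightarrow (\id-\Ad a)|_{\m_a}$ invertible'' into the estimate ``$d(a,\mathcal S)\geq\rho \Rightarrow (\id-\Ad a)|_{\m_a}$ is $\rho^{-C}$-bi-Lipschitz and $\m_a$ varies $\rho^{-C}$-Lipschitz-ly.'' This requires knowing that $\mathcal S$ is exactly the locus where the relevant minor/resultant (controlling the gap between the multiplicity-$r$ eigenvalue cluster at $1$ and the rest of the spectrum of $\Ad a$) vanishes, and then invoking a Łojasiewicz inequality — as already used for Proposition~\ref{almoststab} — to convert $|{\rm resultant}(a)| \geq \rho^{C}$ into $d(a,\mathcal S) \leq \rho$, contrapositively. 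Everything else is a bookkeeping application of Theorem~\ref{quantitative} and Proposition~\ref{composition} together with the conjugation-invariance of $\mathcal S$.
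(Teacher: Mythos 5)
Your proposal is correct in essence and lands on the same key idea — the orbit map at $a$ has differential $\id - \Ad a$, the condition $d(a,\mathcal S)\geq\rho$ controls its inverse quantitatively, and conjugation-invariance of $\mathcal S$ handles uniformity over all base points $x\in C_a\cap U$ — but it takes a noticeably heavier route at the quantitative step. The paper does not pass through resultants or a Łojasiewicz inequality at all here: it fixes the maximal torus $T$ through $a$, decomposes $\g_\C = \bigl(\bigoplus_\alpha \g_\alpha\bigr)\oplus\mathfrak t_\C$ into root spaces, and observes that the eigenvalues of $\varphi'(0)$, with $\varphi(X+t)=e^t e^{(\Ad a^{-1})X}e^{-X}$, are explicitly $1$ (on $\mathfrak t$) and $\chi_\alpha(a^{-1})-1$ (on $\g_\alpha$). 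Since $\mathcal S\cap T$ is a finite union of root hyperplanes $\{\chi_\alpha=1\}$, the lower bound $d(a,\mathcal S)\geq\rho$ gives $|\chi_\alpha(a^{-1})-1|\gg\rho$ \emph{linearly}, with no polynomial loss and no perturbation of a generalized eigenspace. Your route via $\g = \g_a\oplus\m_a$ and a resultant bound would work, but you would then still owe an argument that the eigenbasis of $\Ad a$ has bounded conditioning — the paper finesses this by having the root spaces be a fixed decomposition (once $T$ is chosen), which is what makes the bi-Lipschitz estimate for $\varphi'(0)$ immediate. One more small point: the paper's definition of a submanifold chunk requires a diffeomorphism defined on a full ball of $\R^d$, of which the chunk is the image of a linear slice; your $\phi$ is defined only on $\m_a$, so strictly speaking you still need to extend it across $\g_a$ (which is precisely what the extra factor $e^t$ does in the paper's $\varphi$). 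None of these is a gap in principle, but the paper's explicit root-space computation replaces your perturbation-theoretic and Łojasiewicz machinery with something considerably more elementary and self-contained.
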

\begin{proof}
For each $x$ in $C_a$, we have $C_x=C_a$, and $d(x,\mathcal{S})\gg \rho$, so it suffices to show that $a^{-1}C_a$ is a manifold chunk of complexity $\rho^{-C}$. Denote by $T$ the maximal torus of $G$ containing $a$, by $\mathfrak{t}$ its Lie algebra, and decompose the Lie algebra $\g_{\C}$ into root spaces:
$$\g_\C = \left(\bigoplus_{\alpha\in \Delta} \g_\alpha\right)\oplus \mathfrak{t}_{\C}.$$
In a neighborhood of the identity, any element $g\in G$ can be written $g=e^Xe^t$ for some $X\in\g':=\mathfrak{g}\cap\bigoplus_{\alpha\in\Delta}\mathfrak{g}_\alpha$ and $t\in\mathfrak{t}$.
Therefore, in a neighborhood of the identity, any element $b\in a^{-1}C_a$ can be written $a^{-1}e^Xae^{-X}=e^{(\Ad a^{-1})X}e^{-X}$, for some $X\in\g'$.
Once more, we identify a neighborhood of the identity in $G$ with a neighborhood of $0$ in $\g$. Let
$$
\begin{array}{cccc}
\varphi: & \g & \rightarrow & \g\simeq G\\
& (X+t) & \mapsto & e^te^{(\Ad a^{-1})X}e^{-X}.
\end{array}
$$
The differential of $\varphi$ at $0$ is
$$
\begin{array}{cccc}
\varphi'(0): & \g & \rightarrow & \g\\
& (X+t) & \mapsto & t+(\Ad a^{-1}-1)X.
\end{array}
$$
An eigenvalue $\lambda$ of $\varphi'(0)$ in $\C$ is either $1$ or $\chi_\alpha(a^{-1})-1$ where $\chi_\alpha$ is the character of $T$ corresponding to the root $\alpha$; as $d(a,\mathcal{S})\geq \rho$, we must have $|\lambda|\geq\rho$. Since the operator norm of $\varphi'(0)$ is bounded by a constant depending on $U$ only, this also implies $\|\varphi'(0)^{-1}\|\leq C\rho^{-1}$, for some $C$ depending only on $U$.
Therefore, $\varphi'(0)$ is $\rho^{-C}$-bi-Lipschitz. As of course, $\varphi(0)=0$ and $\varphi'$ is $C$-Lipschitz for some constant $C$ depending on $U$ only, $\varphi$ is a diffeomorphism of complexity $\rho^{-C}$. But $\varphi(\g')=C_a$ in a neighborhood of the identity, so the lemma is proved.
\end{proof}

Combining the above lemma and the Larsen-Pink type inequality, we finally obtain the rich torus we were looking for:

\begin{corollary}\label{richtorus}
Let $G$ be a simple Lie group.
There exists a neighbhorhood $U$ of the identity in $G$ and a constant $C\geq 0$ depending only on $G$ such that for $\delta>0$ small enough, the following holds.\\
Let $A$ be a symmetric subset of $U$ that is not included in a neighborhood of size $\delta^\epsilon$ of a closed subgroup and satisfying
$$N(AAA,\delta)\leq\delta^{-\epsilon}N(A,\delta).$$
Then, there exists a maximal torus $T$ of $G$ such that
$$N(A^{-1}A\cap T^{(\delta^{1-C\epsilon})},\delta) \geq \delta^{C\epsilon}N(A,\delta)^{\frac{1}{d}}.$$
\end{corollary}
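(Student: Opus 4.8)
The plan is to produce, inside a bounded product set of $A$, a regular semisimple element $a$ lying far from the singular set $\mathcal{S}$, to use that its conjugacy class $C_a$ is then a submanifold of positive codimension and controlled complexity, and to obtain the rich torus as a large fibre of the conjugation map $g\mapsto gag^{-1}$, exploiting that each fibre is a coset of $T=Z_G(a)$. We may assume $G$ has trivial center. Since $A$ is $\delta^{\epsilon}$-away from subgroups, Lemma~\ref{aregular} gives an element $a$ in a bounded power of $A$ with $d(a,\mathcal{S})\geq\delta^{C\epsilon}$; then $T=Z_G(a)$ is a maximal torus (as $a$ is regular and close to $1$) and a submanifold of complexity $O(1)$ in $U$, while the lemma above on conjugacy classes of regular elements shows $C_a$ is a submanifold of $U$ of complexity at most $\delta^{-C\epsilon}$, with $\codim C_a=\dim T\geq 1$. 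Let $A'$ be a bounded symmetric power of $A$ large enough that $a\in A'$ and $gag^{-1}\in A'$ for all $g\in A$. By Ruzsa's inequality \cite[Theorem~6.8]{taoestimates} together with the tripling hypothesis, $N(A'A'A',\delta)\leq\delta^{-O(\epsilon)}N(A',\delta)$ and $N(A',\delta)\leq\delta^{-O(\epsilon)}N(A,\delta)$, and $A'$ is still not contained in a $\delta^{\epsilon}$-neighbourhood of a closed connected subgroup. Proposition~\ref{lp}, applied to $A'$ and $C_a$ with $\epsilon$ replaced by $C\epsilon$, therefore yields
$$N(A'\cap C_a,\delta)\ \leq\ \delta^{-C\epsilon}\,N(A,\delta)^{1-\frac1d}.$$

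Write $N=N(A,\delta)$ and fix a small constant $\kappa=\kappa(G)>0$, to be chosen below. Let $X$ be a maximal $\delta$-separated subset of $A$, so $|X|\geq N$. As $U$ is covered by $O(\kappa^{-d}\delta^{-dC\epsilon})$ balls of radius $\kappa\delta^{C\epsilon}$, pigeonholing gives a subset $X_1\subset X$ contained in one such ball with $|X_1|\geq\delta^{O(\epsilon)}N$. The conjugation map $g\mapsto gag^{-1}$ sends $X_1$ into $A'\cap C_a$, a set covered by $N(A'\cap C_a,\delta)$ balls of radius $\delta$; pigeonholing again, we obtain $g_0\in X_1$ and $Y\subset X_1$ with $d(gag^{-1},g_0ag_0^{-1})\leq 2\delta$ for every $g\in Y$ and
$$|Y|\ \geq\ \frac{|X_1|}{N(A'\cap C_a,\delta)}\ \geq\ \delta^{O(\epsilon)}\,N^{\frac1d}.$$

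The crux is to show $g_0^{-1}Y\subset T^{(\delta^{1-C\epsilon})}$. Fix $g\in Y$, put $t=g_0^{-1}g\in A^{-1}A$, so $d(t,1)\leq 2\kappa\delta^{C\epsilon}$; since $tat^{-1}=g_0^{-1}(gag^{-1})g_0$ and $a=g_0^{-1}(g_0ag_0^{-1})g_0$, conjugating by the bounded element $g_0$ gives $d(tat^{-1},a)\leq C\delta$. Write $t=\exp W$ with $W=W_0+W_1\in\g'\oplus\mathfrak{t}$, where $\g'$ denotes the sum of the root spaces and $\mathfrak{t}=\Lie T$; then $\|W\|\leq C\kappa\delta^{C\epsilon}$ and $d(t,T)\leq C\|W_0\|$. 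Since $\Ad a$ fixes $\mathfrak{t}$ we have $a^{-1}tat^{-1}=e^{(\Ad a^{-1})W}e^{-W}$, and the Baker--Campbell--Hausdorff formula gives $\log(a^{-1}tat^{-1})=(\Ad a^{-1}-1)W_0+E$ with $\|E\|\leq C\|W\|\,\|W_0\|$. Exactly as in the proof of the lemma on conjugacy classes, $d(a,\mathcal{S})\geq\delta^{C\epsilon}$ forces every eigenvalue $\chi_\alpha(a^{-1})-1$ of $\Ad a^{-1}-1$ on $\g'$ to have modulus $\geq\delta^{C\epsilon}$, hence $\|(\Ad a^{-1}-1)W_0\|\geq c\,\delta^{C\epsilon}\|W_0\|$ for some constant $c=c(G)>0$. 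Choosing $\kappa$ small enough in terms of $G$, the error $E$ is at most half of $(\Ad a^{-1}-1)W_0$, so $C\delta\geq d(tat^{-1},a)\gg\delta^{C\epsilon}\|W_0\|$, whence $d(t,T)\leq C\|W_0\|\leq\delta^{1-C\epsilon}$. Thus $g_0^{-1}Y$ is a $\delta$-separated subset of $A^{-1}A\cap T^{(\delta^{1-C\epsilon})}$ of cardinality $\geq\delta^{O(\epsilon)}N^{1/d}$, which, after relabelling constants, is the corollary.

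The step I expect to be the main obstacle is this last one: converting ``$tat^{-1}$ is $\delta$-close to $a$'' into ``$t$ is $\delta^{1-C\epsilon}$-close to $Z_G(a)$''. The linearisation used above is only legitimate because $X$ has been cut down to a ball of radius comparable to the regularity scale $\delta^{C\epsilon}$ of $a$; over all of $A^{-1}A$ the class $C_a$ could return near $a$ far from $a$, so $t$ need not be close to $T$, and a bare {\L}ojasiewicz estimate would only give an exponent like $\delta^{1/C}$. Obtaining the sharp exponent $1-C\epsilon$ is precisely what forces one to work at the scale $\delta^{C\epsilon}$, to use the quantitative regularity of $a$, and to perform the two pigeonholes in the right order.
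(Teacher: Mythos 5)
Your proposal follows the same strategy as the paper's proof: produce a regular element $a$ in a bounded power of $A$ with $d(a,\mathcal{S})\geq\delta^{C\epsilon}$ via Lemma~\ref{aregular}; use the lemma on conjugacy classes and Proposition~\ref{lp} to bound $N(A^C\cap C_a,\delta)\leq\delta^{-C\epsilon}N(A,\delta)^{1-1/d}$; pigeonhole the conjugation map $g\mapsto gag^{-1}$ to find a large $\delta$-fibre; and linearise near $a$ to deduce that the fibre lies in $T^{(\delta^{1-C\epsilon})}$, where $T$ is the maximal torus through $a$. The one genuine difference is your preliminary pigeonhole cutting $A$ down to a ball of radius $\kappa\delta^{C\epsilon}$ before running the fibre pigeonhole, which you introduce to control the Baker--Campbell--Hausdorff error $E$ by $C\|W\|\,\|W_0\|$. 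This is correct but unnecessarily conservative: writing $Y_0=(\Ad a^{-1}-1)W_0$ and expanding $\log\bigl(e^{(\Ad a^{-1})W}e^{-W}\bigr)$, the commutator $[(\Ad a^{-1})W,-W]=-[Y_0,W]$ and all higher brackets are bounded by $C\|Y_0\|\|W\|$, not merely $C\|W_0\|\|W\|$. Hence $E$ is dominated by the leading term $Y_0$ as soon as $\|W\|$ is small in absolute terms, i.e.\ once $U$ has been fixed small enough, with no need to shrink to the regularity scale $\delta^{C\epsilon}$. That is what lets the paper apply the estimate directly to $x\in A^{-1}A$, whereas you lose an extra (harmless) $\delta^{O(\epsilon)}$ factor. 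Your stated worry that ``$C_a$ could return near $a$ far from $a$'' is also not an obstruction here, because all the group elements involved lie in the fixed small neighbourhood $U^{-1}U$, inside which the only fibre component over $a$ is $T\cap U^{-1}U$. So the argument is sound; it just uses a weaker BCH estimate, compensated by an extra localisation step.
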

\begin{proof}
From Lemma~\ref{aregular}, there exists an element $a$ in a product set of $A$ such that $d(a,\mathcal{S})\geq\delta^{C\epsilon}$. We let $A$ act on $C_a$ by conjugation.
 From the previous lemma, $C_a$ is a submanifold of complexity $\delta^{-C\epsilon}$ in $U$, so from the Larsen-Pink type inequality (Proposition~\ref{lp}),
$$N(A^C\cap C_a,\delta)\leq \delta^{-C\epsilon} N(A,\delta)^{1-\frac{1}{d}}.$$
Therefore, by Dirichlet's box-principle, there exists $g\in C_a$ such that
$$N(\{x\in A\,|\, d(xax^{-1},g)\leq \delta\},\delta)\geq \delta^{C\epsilon}N(A,\delta)^{\frac{1}{d}}.$$
Choosing $x_0\in A$ such that $d(x_0ax_0^{-1},g)\leq \delta$, we find
\begin{align*}
\delta^{C\epsilon}N(A,\delta)^{\frac{1}{d}} & \leq N(\{x\in A\,|\, d(x_0^{-1}xa(x_0^{-1}x)^{-1},x_0^{-1}gx_0)\leq \delta\},\delta)\\
& \leq N(\{x\in A\,|\, d(x_0^{-1}xax^{-1}x_0,a)\leq 2\delta\},\delta)\\
& \leq N(\{x\in A^{-1}A\,|\, d(xax^{-1},a)\leq 2\delta\},\delta).
\end{align*}
To conclude, it will suffice to show that for $x$ in $U$,
$$d(xax^{-1},a)\leq 2\delta \Longrightarrow d(x,T)\leq \delta^{1-C\epsilon}.$$
For this, write $x=e^X$, and decompose $X$ onto the root-spaces of $\g_\C$:
$$X= t + \sum_{\alpha\in\Delta} X_\alpha$$
with, $t\in\mathfrak{t}$ and, for each $\alpha$, $X_\alpha\in\g_\alpha$.
As $\exp$ is a diffeomorphism on a neighborhood of the identity, we get from $d(xax^{-1},a)\leq 2\delta$ that  $d(X,(\Ad a)X)\leq C\delta$ for some constant $C$ depending only on $G$. Now, 
$$(\Ad a)X-X=\sum_{\alpha\in\Delta}(\chi_\alpha(a)-1)X_\alpha,$$
and, as $d(a,\mathcal{S})\geq \delta^{C\epsilon}$, we have for each $\alpha\in\Delta$, $|\chi_\alpha(a)-1|\geq\delta^{C\epsilon}$. Thus, we get, for each $\alpha$, $\|X_\alpha\|\leq \delta^{1-C\epsilon}$, i.e.
$$d(X,\mathfrak{t})\leq \delta^{1-C\epsilon}.$$
Going back to $G$, this translates to
$$d(x,T) \leq \delta^{1-C\epsilon},$$
which is exactly what we wanted to show.
\end{proof}

\subsection{From a rich torus to a small segment}

The fundamental growth statement we use in our proof of the product theorem is the following lemma of Bourgain and Gamburd \cite[Corollary 8]{bourgaingamburdsud}.\\
Denote $\Delta \subset \Mat_{d\times d}(\C)$ the set of diagonal matrices. If $A$ is a subset of an additive group, and $s$ a positive integer, we denote by $sA$ the $s$-fold sumset $A+\dots+A$.

\begin{lemma}\label{bgproduct}
Given $\sigma>0$ and $d$ a positive integer, there exist $\alpha\geq 0$, $\beta >0$, and a positive integer $s$ such that, for $\delta>0$ sufficiently small, the following holds.\\
Assume $A\subset \Mat_{d\times d}(\C)$ satisfies
\begin{enumerate}
\item $A\subset B(0,2)$
\item $N(A,\delta)>\delta^{-\sigma}$
\item $d(x,\Delta)<\delta$ for $x\in A$.
\end{enumerate}
Then there is $\eta\in\Delta$, $\|\eta\|=1$ such that
$$[0,\delta^\alpha]\eta\subset sA^s-sA^s+B(0,\delta^{\alpha+\beta}).$$
\end{lemma}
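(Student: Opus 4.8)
\textbf{Proof proposal for Lemma~\ref{bgproduct}.}

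The plan is to reduce the statement to Bourgain's discretized sum-product theorem (or, more precisely, to the multidimensional version due to Bourgain and Gamburd, which is what the quoted "Corollary 8" really packages) applied coordinate by coordinate. Write $A\subset\Mat_{d\times d}(\C)$ as a subset of $\C^{d^2}\cong\R^{2d^2}$; since $d(x,\Delta)<\delta$ for every $x\in A$, the set $A$ is, up to an error of size $\delta$, the graph of a subset $\tilde A$ of $\Delta\cong\C^d$, and the hypotheses $A\subset B(0,2)$, $N(A,\delta)>\delta^{-\sigma}$ transfer to $\tilde A$ with the same (or a slightly worse) exponent. So it suffices to produce $\eta\in\Delta$ with $\|\eta\|=1$ and
$$[0,\delta^\alpha]\eta\subset s\tilde A^s-s\tilde A^s+B(0,\delta^{\alpha+\beta}),$$
where now all the arithmetic is inside the abelian ring $\Delta$ with entrywise multiplication, and where the $\delta$-thickening absorbs the discrepancy between $A$ and its diagonalization; note $A^s$ (the product set in $\Mat_{d\times d}$) contains, up to $O_s(\delta)$, the entrywise power $\tilde A^s$ precisely because all matrices involved are $\delta$-close to the commuting family $\Delta$.

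First I would run a pigeonholing/projection argument on the coordinates. For each coordinate $i\in\{1,\dots,d\}$ let $\pi_i:\Delta\to\C$ be the projection to the $i$-th diagonal entry. Since $N(\tilde A,\delta)>\delta^{-\sigma}$ and $\tilde A\subset\C^d$, there is at least one coordinate $i_0$ with $N(\pi_{i_0}(\tilde A),\delta)>\delta^{-\sigma/d}$; fix such an $i_0$ and set $B=\pi_{i_0}(\tilde A)\subset\C$, so $B\subset B(0,2)$ and $N(B,\delta)>\delta^{-\sigma/d}$. Now apply the discretized sum-product theorem in $\C$ (Bourgain, \cite{bourgainringconjecture}, in the complex/$\R^2$ form): either $B$ has substantial additive and multiplicative growth — which, iterated a bounded number $s$ of times and combined with the box principle, forces $sB^s-sB^s$ (entrywise, in $\C$) to contain a disc $B_\C(0,\delta^\alpha)$ up to an error $\delta^{\alpha+\beta}$ — or else $B$ is trapped near a proper subring, i.e. near $\R$ (after a dilation) or near a point, which for a set of size $>\delta^{-\sigma/d}$ with $\sigma/d>0$ is excluded for $\delta$ small once we also use that genuine $\delta$-entropy at every scale is available. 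Actually the cleanest route is to invoke the one-parameter scale of the statement directly: this is exactly the content of the one-dimensional discretized sum-product "interval" conclusion, and the quoted Corollary~8 of \cite{bourgaingamburdsud} is the $\Mat_{d\times d}$-packaging of it, so I would simply cite it for $B\subset\C$ and then lift.

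Lifting is the last step: having found $\theta\in\C$, $|\theta|=1$, with $[0,\delta^{\alpha_0}]\theta\subset s_0B^{s_0}-s_0B^{s_0}+B_\C(0,\delta^{\alpha_0+\beta_0})$, let $\eta\in\Delta$ be the matrix unit $\theta\cdot e_{i_0 i_0}$, normalized so $\|\eta\|=1$. Every element of $s_0B^{s_0}-s_0B^{s_0}$ is the $i_0$-th coordinate of an element of $s_0\tilde A^{s_0}-s_0\tilde A^{s_0}$ (fill the other coordinates arbitrarily from $\tilde A$ and absorb them: here is where one must be slightly careful, because the other $d-1$ coordinates are \emph{not} zero, so one instead argues that $s_0\tilde A^{s_0}-s_0\tilde A^{s_0}$ contains a translate of $[0,\delta^{\alpha_0}]\eta$ and then uses one more round of pigeonhole on the transversal coordinates, at the cost of enlarging $s$ from $s_0$ to some $s\geq 2s_0$ and shrinking $\alpha,\beta$, to recentre the segment at $0$); finally pass back from $\tilde A$ to $A$, paying an extra $B(0,O_s(\delta))\subset B(0,\delta^{\alpha+\beta})$ for the diagonalization error. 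Choosing $\alpha,\beta$ and $s$ to accommodate all the bounded losses (the factor $1/d$ in the entropy, the finitely many pigeonholings, the Ruzsa-type inflations of $s_0$ to $s$, and the $\delta$-perturbation) completes the proof.

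\textbf{Main obstacle.} The genuinely substantive input is the discretized sum-product theorem in $\C$ that produces an honest \emph{interval} (one-parameter family) rather than just a lower bound on covering numbers of $sB^s-sB^s$; that is a deep theorem, but it is precisely what \cite{bourgaingamburdsud} supplies, so within this paper it is a black box. The only real work left is bookkeeping: handling the non-trivial transversal coordinates when passing from the scalar conclusion to the matrix conclusion (the recentring at $0$), and checking that the diagonalization $A\leadsto\tilde A$ and the passage $A^s\leadsto\tilde A^s$ cost only $O_s(\delta)$ — both of which are routine given the hypothesis $d(x,\Delta)<\delta$ and the boundedness $A\subset B(0,2)$.
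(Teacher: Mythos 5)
The paper does not actually prove Lemma~\ref{bgproduct}: it imports it verbatim from Bourgain and Gamburd \cite[Corollary~8]{bourgaingamburdsud} and uses it as a black box. So the relevant question is whether your sketch is a sound proof of that cited result, and I do not think it is: the lifting step, which you yourself flag as the delicate point, is a genuine gap rather than a routine bit of bookkeeping.

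After projecting to the coordinate $i_0$ where $B=\pi_{i_0}(\tilde A)\subset\C$ still has $N(B,\delta)>\delta^{-\sigma/d}$, you apply a one-dimensional interval sum-product theorem and obtain $[0,\delta^{\alpha_0}]\theta\subset s_0B^{s_0}-s_0B^{s_0}+B_\C(0,\delta^{\alpha_0+\beta_0})$. Because $\pi_{i_0}:\Delta\to\C$ is a ring homomorphism, each $t\theta$ is indeed the $i_0$-th coordinate of some $y_t\in s_0\tilde A^{s_0}-s_0\tilde A^{s_0}$ -- but the other $d-1$ complex coordinates of $y_t$ are entirely uncontrolled, and the conclusion you must reach is that for a \emph{continuum} of $t$'s the element $t\eta$ (whose transversal coordinates are all zero) is $\delta^{\alpha+\beta}$-close to $s\tilde A^{s}-s\tilde A^{s}$. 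Your proposed fix, ``one more round of pigeonhole on the transversal coordinates,'' cannot deliver this: pigeonholing the values $y_t$ into a $\delta^{\alpha+\beta}$-net of a bounded region of real dimension $2(d-1)$ can at best single out a discrete sub\emph{set} of parameters $t$ whose lifts happen to be transversally aligned, and even that set is a priori only a tiny (of order $\delta^{2(d-1)(\alpha+\beta)}$) fraction of the parameters, not an interval. The one-parameter structure is simply destroyed by the projection and cannot be recovered by pigeonholing after the fact. This is precisely why Corollary~8 of \cite{bourgaingamburdsud} is a genuinely multi-dimensional theorem: the sum-product/approximate-subring machinery is run directly in the commutative algebra $\Delta\cong\C^d$, so that the interval one extracts already lives in $\Delta$. (A secondary issue: you dismiss the ``near a proper subring/near a point'' alternative in the scalar sum-product theorem by invoking ``genuine $\delta$-entropy at every scale,'' but the hypotheses of the lemma include no non-concentration assumption, so that escape hatch is not available in the form you use it; the cited Corollary~8 handles the concentrated case separately rather than excluding it.)
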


\begin{remark}\label{uniform}
In addition, it follows from the proof of that result that when $d$ is fixed and $\sigma$ remains bounded away from zero, the corresponding constants $\alpha$ and $s$ remain bounded, while $\beta$ remains bounded away from zero.
\end{remark}

The idea is to apply that lemma in the adjoint representation of $G$ on $\g_\C$ to a rich torus as constructed above. This will yield inside a product set of $A$ some small one-dimensional structure from which we will be able to derive the desired growth statement. In what follows, $U$ is a neighborhood of the identity in $G$ in which all the above results hold: Larsen-Pink type inequalities, rich torus, ...etc. 

The following lemma will be the key step to prove Proposition~\ref{minustau}.

\begin{lemma}\label{abc}
Given $\sigma\in(0,d)$, there exist $C=C(\sigma, G)$, $\epsilon_0>0$, $\alpha\geq 0$ and $\beta>0$ such that for $\delta>0$ sufficiently small, for $\gamma\geq\alpha+\beta$, the following holds.\\
Let $A\subset U$ be a symmetric set that is $\delta^{\epsilon_0}$-away from subgroups, and such that $N(A,\delta)\geq\delta^{-\sigma}$ and
$$N(AAA,\delta)\leq\delta^{-\epsilon_0}N(A,\delta).$$
If $A$ contains an element whose distance from the identity is $\delta^\gamma$, then there exists a unit element $\xi\in \g$ such that the segment
$$\{\exp(t\xi)\,;\, t\in[0,\delta^{\alpha+\gamma}]\}$$
is included in a neighborhood of size $\delta^{\alpha+\beta+\gamma}$ of $A^C$.
\end{lemma}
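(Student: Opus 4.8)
The plan is to combine the rich-torus construction (Corollary~\ref{richtorus}) with the Bourgain--Gamburd product lemma (Lemma~\ref{bgproduct}) applied in the adjoint representation, and then transfer the resulting one-dimensional additive structure in $\g_\C$ back to a segment in $G$ via the exponential map. First I would fix $\sigma\in(0,d)$ and choose $\epsilon_0$ small enough that all the earlier results (Larsen--Pink, Proposition~\ref{escape}, Corollary~\ref{richtorus}) apply with their $\epsilon$ taken to be a large multiple of $\epsilon_0$. Since $A$ is $\delta^{\epsilon_0}$-away from subgroups, symmetric, has $N(A,\delta)\geq\delta^{-\sigma}$ and small tripling, Corollary~\ref{richtorus} yields a maximal torus $T$ with
$$N(A^{-1}A\cap T^{(\delta^{1-C\epsilon_0})},\delta)\geq \delta^{C\epsilon_0}N(A,\delta)^{1/d}\geq \delta^{-\sigma/d+C\epsilon_0}.$$

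Next I would push this rich subset of $T^{(\delta^{1-C\epsilon_0})}$ into the Lie algebra: applying $\Ad$ (or equivalently $\log$ near the identity) to the elements of $B:=A^{-1}A\cap T^{(\delta')}$ gives a subset $B'$ of a small neighborhood of $\mathfrak{t}$ in $\g$, and since $\Ad$ diagonalizes on $\mathfrak{t}$ we get, after a fixed linear identification, a set in $\Mat_{d\times d}(\C)$ that is $O(\delta^{1-C\epsilon_0})$-close to the diagonal subspace $\Delta$ and has covering number $\geq \delta^{-\sigma/d+C\epsilon_0}$ at scale $\delta$. To match the hypotheses of Lemma~\ref{bgproduct} I would rescale: since we only have $\delta$-closeness to $\Delta$ up to $\delta^{1-C\epsilon_0}$, I would work at scale $\delta_1:=\delta^{1-C\epsilon_0}$, so that $B'$ becomes $\delta_1$-close to $\Delta$ with $N(B',\delta_1)\geq \delta_1^{-\sigma'}$ for some $\sigma'>0$ bounded away from $0$ (using $\sigma'\approx (\sigma/d)$ and that the exponents lose only a factor controlled by $\epsilon_0$). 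Remark~\ref{uniform} guarantees that the constants $\alpha, s$ from Lemma~\ref{bgproduct} stay bounded and $\beta$ stays bounded away from zero. The lemma then produces $\eta\in\Delta$, $\|\eta\|=1$, with
$$[0,\delta_1^\alpha]\eta\subset s(B')^s - s(B')^s + B(0,\delta_1^{\alpha+\beta}).$$

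The final step is to translate this additive statement back to a multiplicative segment in $G$. The sumset $s(B')^s - s(B')^s$ sits inside $O(s^2)$-fold sums of $\log$-images of elements of $A^{-1}A$; using the Baker--Campbell--Hausdorff formula near the identity, a sum of $\log$'s of $k$ elements of a bounded neighborhood is $O(\delta^{2-O(C\epsilon_0)})$-close to $\log$ of a product of those $k$ elements — more precisely, iterating BCH and absorbing quadratic errors (which are of size $O(\delta_1^2)\ll \delta_1^{\alpha+\beta}$ once $\alpha+\beta\geq 1$, which we may assume, and $\gamma\geq\alpha+\beta$ is in the hypothesis to allow scaling) shows that every element of $s(B')^s-s(B')^s$ is $\delta_1^{\alpha+\beta}$-close to $\log a$ for some $a\in A^{C}$. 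Hence $\exp$ maps $[0,\delta_1^\alpha]\eta$ into a $\delta_1^{\alpha+\beta}$-neighborhood of $A^C$. Setting $\xi$ to be (the image under $d\exp_0$, suitably normalized, of) $\eta\in\mathfrak{t}$, and renaming $\delta_1^\alpha = \delta^{\alpha'}$ with $\alpha' = \alpha(1-C\epsilon_0)$ absorbed into a new $\alpha$, we get the segment $\{\exp(t\xi) : t\in[0,\delta^{\alpha+\gamma}]\}$ in a $\delta^{\alpha+\beta+\gamma}$-neighborhood of $A^C$, where the shift by $\gamma$ in the exponents comes from first applying the argument to the rescaled set (the element at distance $\delta^\gamma$ from identity lets us rescale $A$ by $\delta^{-\gamma}$ and run everything there).

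The main obstacle I anticipate is the bookkeeping in the BCH step and the rescaling: one must carefully track how the $\delta$-covering hypotheses degrade under $\log$, under taking sums versus products, and under the change of scale from $\delta$ to $\delta^{1-C\epsilon_0}$, making sure that the additive error $\delta_1^{\alpha+\beta}$ genuinely dominates the BCH commutator errors and that the final exponents have the claimed clean form $\alpha+\gamma$ and $\alpha+\beta+\gamma$. The hypothesis $\gamma\geq\alpha+\beta$ and the freedom to enlarge $C$ are exactly what make this work, but verifying the inequality $\delta_1^2 \ll \delta_1^{\alpha+\beta}$ (equivalently $\alpha+\beta\geq 2$, achievable by enlarging $\alpha$) together with the scale-invariance of the whole setup under $x\mapsto \delta^{-\gamma}\log x$ is where the care is needed. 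A secondary subtlety is ensuring that the rescaled set is still away from subgroups and still symmetric with small tripling — this follows since these are scale-invariant properties of $A$ up to harmless constant losses in $\epsilon_0$, but it must be checked.
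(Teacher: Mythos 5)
Your proposal follows the paper's outline through the rich torus (Corollary~\ref{richtorus}) and the application of Lemma~\ref{bgproduct} to $B=\Ad\bigl(A^{-1}A\cap T^{(\delta^{1-C\epsilon_0})}\bigr)$, but the final step — converting the additive conclusion back into a segment in $G$ — contains a genuine gap. You attempt to apply Baker--Campbell--Hausdorff directly to the sum $sB^s-sB^s$, claiming the quadratic BCH errors are $O(\delta_1^2)$. But the elements of $B$ are of the form $\Ad x$ (or $\log x$) with $x\in A^{-1}A$ living in a \emph{bounded} neighborhood $U$ of the identity, so each term in the sum has norm $\simeq 1$, not $O(\delta_1)$. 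The conclusion of Lemma~\ref{bgproduct} is that these $O(1)$-sized terms almost cancel to produce the tiny vector $t\eta$, but the individual terms are not small, so the BCH commutator errors are $O(1)$ and your estimate fails. Consequently ``a sum of $\log$'s is $\delta_1^{\alpha+\beta}$-close to $\log$ of a product'' is simply false in this regime, and the vague invocation of ``rescaling $A$ by $\delta^{-\gamma}$'' does not repair it: there is no group-compatible rescaling of a nonabelian $G$.

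The missing idea, which is the heart of the paper's proof, is to note that the additive identity from Lemma~\ref{bgproduct} is an identity of \emph{operators} on $\g_\C$. One never tries to exponentiate the operators themselves. Instead one applies both sides of $t\eta=\pm\Ad x_1\pm\dots\pm\Ad x_{2s}+O(\delta^{\alpha+\beta})$ to a small vector $uX\in\g$ with $u=\delta^\gamma$ (this is precisely where the hypothesis that $A$ contains an element $e^{uX}$ at distance $\delta^\gamma$ from the identity enters). Now each term $(\Ad x_j)(uX)$ has norm $O(u)$, so the BCH error in
$$\exp\Bigl(\sum_j\pm(\Ad x_j)(uX)\Bigr)=\prod_j e^{\pm(\Ad x_j)(uX)}\cdot e^{O(u^2)}$$
is $O(u^2)=O(\delta^{2\gamma})\leq\delta^{\alpha+\beta+\gamma}$ by the hypothesis $\gamma\geq\alpha+\beta$. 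Finally the conjugation identity $e^{(\Ad x_j)(uX)}=x_j e^{uX}x_j^{-1}$ places each factor in $A^{O(s)}$, and a last application of Proposition~\ref{escape} in the adjoint representation guarantees $\|\eta((\Ad a)X)\|\gtrsim\delta^{C\epsilon_0}$ so that the resulting vector $\xi$ can be normalized. Your write-up omits both the ``apply to $uX$'' device and the conjugation identity, which is exactly why the bookkeeping you flag as ``the main obstacle'' cannot be made to close.
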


The proof consists of applying Lemma~\ref{bgproduct} to a rich torus as given by Corollary~\ref{richtorus}, in the adjoint representation. However, in order to prevent the sum operation from producing an element too far away from a product set $A^C$, we must act by conjugation on an element whose distance to the identity is less than $\delta^{\alpha+\beta}$ -- whence the condition $\gamma\geq\alpha+\beta$. 

\begin{proof}
Suppose $A\subset U$ is a set satisfying the hypotheses of the lemma.
From Corollary~\ref{richtorus}, there exists a maximal torus $T$ with
$$N(A^{-1}A\cap T^{(\delta^{1-C\epsilon_0})},\delta)\geq \delta^{-\frac{\sigma}{d}+C\epsilon_0}.$$
Let $B$ be the image of $A^{-1}A\cap T^{(\delta^{1-C\epsilon_0})}$ under the adjoint representation $\Ad$ of $G$ on $\g_{\C}$. The representation is bounded, so that provided $U$ has been chosen small enough, we have
$$B\subset B(0,2).$$
From the decomposition of $\g_{\C}$ into weight-spaces, $\Ad T$ can be viewed as a subset of the diagonal matrices of size $d$ and so for each $b\in B$,
$$d(b,\Delta)\leq\delta^{1-C\epsilon_0}:=\delta_1.$$
Finally,  on a neighborhood of the identity, the adjoint map $g\mapsto \Ad g$ is a diffeomorphism, so that
$$N(B,\delta_1)\gg N(A\cap T^{(\delta_1)},\delta_1)\gg \left(\frac{\delta_1}{\delta}\right)^d N(A\cap T^{(\delta_1)},\delta)\geq \delta_1^{-\frac{\sigma}{d}+C\epsilon_0}.$$
So we may apply Lemma~\ref{bgproduct} to $B$: there exists an $\eta\in\Delta$, $\|\eta\|=1$ such that
$$[0,\delta^\alpha]\eta\subset sB^s-sB^s+B(0,\delta^{\alpha+\beta}).$$
Now let $X$ be a unit element of $\g$, the Lie algebra of $G$.
For $t\in[0,\delta^\alpha]$, write
$$t\eta=\Ad x_1 \pm \Ad x_2 \pm \dots \pm \Ad x_{2s} + O(\delta^{\alpha+\beta}),$$
where each $x_i$ is an element of $A^s$.
If $u>0$ is another parameter, we have:
\begin{align*}
\exp(t\eta(uX)) & = \exp[(\Ad x_1 \pm \Ad x_2 \pm \dots \pm \Ad x_{2s} + O(\delta^{\alpha+\beta})(uX)]\\
& = e^{(\Ad x_1)(uX)}e^{\pm(\Ad x_2)(uX)}\dots e^{\pm(\Ad x_{2s})(uX)} e^{O(u\delta^{\alpha+\beta}+u^2)}\\
& = x_1 e^{uX} x_1^{-1} x_2 e^{\pm uX} x_2^{-1}\dots x_{2s}e^{\pm uX} x_{2s}^{-1} e^{O(u\delta^{\alpha+\beta}+u^2)}.
\end{align*}
Write $u=\delta^{\gamma}$ with $\gamma\geq\alpha+\beta$, and choose an element $g=e^{uX}$ in $A$. We find, for each $t\in [0,\delta^\alpha]$,
$$d(e^{ut\eta(X)}, A^C)=O(u\delta^{\alpha+\beta}).$$ 
If $\|\eta(X)\|\geq \delta^\epsilon$, this shows that some product set of $A$ (with bounded exponent) contains a segment of length $\delta^{\alpha+\gamma}$ in its $\delta^{\alpha+\beta+\gamma}$-neighborhood (adjusting the value of $\beta$ by some $\epsilon$).

On the other hand from Proposition~\ref{escape} applied in the adjoint representation, with vector $X$ and subspace $\ker\eta$, we may always find an element $a\in A^C$ such that
$$\|\eta(\Ad a)X)\|\geq \delta^{C\epsilon_0}.$$
As the element $e^{u(\Ad a)X}=ag a^{-1}$ is in $A^{3C}$, we may replace $X$ by $(\Ad a)X$ in the above computation, so that the element $\xi=\eta(\Ad a)X$ satisfies the conclusion of the lemma.
\end{proof}

To prove the product theorem, we will now make use of the scale invariance assumption on $A$: for all $\rho\geq\delta$,
$$N(A,\rho)\geq\delta^\epsilon\rho^{-\sigma}.$$
Using this property, we may improve the previous lemma, this is the content of the next proposition.

\begin{proposition}\label{minustau}
Given $\sigma\in(0,d)$, there exist $C\geq 0$ and $\tau,\epsilon_1>0$ such that for $\delta>0$ sufficiently small, the following holds.\\
Assume $A$ is a symmetric set in $U$ satisfying:
\begin{enumerate}
\item $N(A,\delta) \leq \delta^{-\sigma-\epsilon_1}$
\item $\forall \rho\geq\delta$, $N(A,\rho)\geq\delta^{\epsilon_1}\rho^{-\sigma}$
\item $A$ is $\delta^{\epsilon_1}$-away from subgroups
\item $N(AAA,\delta)\leq \delta^{-\epsilon_1}N(A,\delta)$.
\end{enumerate}
Then, there exists a segment of length $\delta^{1-\tau}$,
$$\{\exp(t\xi)\,;\, t\in[0,\delta^{1-\tau}]\}$$
that is included in a $\delta$-neighborhood of $A^C$.
\end{proposition}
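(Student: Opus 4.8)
The plan is to iterate Lemma~\ref{abc}. Recall what it gives: from an element of a bounded product set of $A$ lying at distance $\simeq\delta^{\gamma}$ from the identity (with $\gamma\ge\alpha+\beta$), it produces a segment $\{\exp(t\xi);\,t\in[0,\delta^{\alpha+\gamma}]\}$ inside the $\delta^{\alpha+\beta+\gamma}$-neighborhood of some $A^{C}$. The key observation is that this segment passes through the identity, so for any $\gamma'\in[\alpha+\gamma,\,\alpha+\gamma+\tfrac{\beta}{2}]$ the point $\exp(\delta^{\gamma'}\xi)$ lies at distance $\simeq\delta^{\gamma'}$ from $1$ and within $\delta^{\alpha+\beta+\gamma}\ll\delta^{\gamma'}$ of $A^{C}$; hence $A^{C}$ itself contains an element at distance $\simeq\delta^{\gamma'}$ from the identity. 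In other words, one application of Lemma~\ref{abc} converts ``$A$ good, with an element at exponent $\gamma$'' into ``$A^{C}$ good, with an element at any prescribed exponent $\gamma'\in[\alpha+\gamma,\alpha+\gamma+\tfrac{\beta}{2}]$'', where ``good'' means symmetric, $\delta^{\epsilon_{1}}$-away from subgroups (being a superset of $A$), with $N(\cdot,\delta)$ large enough to apply Lemma~\ref{abc} (with $\sigma$ replaced by $\sigma-\epsilon_{1}$, which is still in $(0,d)$) and with small tripling — the last property preserved under passing to a bounded power by Ruzsa's inequality (Tao~\cite[Theorem~6.8]{taoestimates}). Thus the available exponent can be raised in steps of any prescribed size between $\alpha$ and $\alpha+\tfrac{\beta}{2}$.

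Since we want a segment of length $\delta^{1-\tau}$ inside the \emph{$\delta$}-neighborhood of a product set, we stop the iteration at an exponent $\gamma^{*}$ for which the output neighborhood $\delta^{\alpha+\beta+\gamma^{*}}$ is $\le\delta$ while the output length $\delta^{\alpha+\gamma^{*}}$ is $\ge\delta^{1-\tau}$; that is, $\gamma^{*}\in[1-\alpha-\beta,\,1-\alpha-\tau]$, and we set $\tau:=1-\alpha-\gamma^{*}\in(0,\beta]$. Reaching such a $\gamma^{*}$ from an initial exponent $\gamma_{0}$, by increments in $[\alpha,\alpha+\tfrac{\beta}{2}]$ staying inside $[\alpha+\beta,1-\alpha)$ throughout, takes only $O(1)$ steps — a number bounded in terms of $\sigma$ and $d$ alone, using that by Remark~\ref{uniform} the constants $\alpha,\beta$ produced by Lemma~\ref{bgproduct} (applied with parameter $\tfrac{\sigma}{d}$) have $\alpha$ bounded and $\beta$ bounded away from $0$; in particular we may take $\alpha$ small enough that $2\alpha+\beta<1$, which is precisely what makes the target window $[1-\alpha-\beta,1-\alpha)$ lie above $\alpha+\beta$ and the iteration steer into it without overshooting. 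Consequently the total power $C$ of $A$ stays bounded, and a final application of Lemma~\ref{abc} at exponent $\gamma^{*}$ delivers the desired segment.

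It remains to get the iteration started, i.e.\ to exhibit, inside some $A^{O(1)}$, an element at a \emph{controlled} distance from the identity — an exponent $\gamma_{0}$ with $\alpha+\beta\le\gamma_{0}<1-\alpha$. This is where the scale-invariance hypothesis~(2) is used. For every $\rho\ge\delta$ it yields $N(A,\rho)\ge\delta^{\epsilon_{1}}\rho^{-\sigma}$, so by covering $U$ with $O(\rho^{-d})$ balls of radius $\rho/4$ and pigeonholing the $\delta$-covering mass of $A$ among them (exactly as in the proof of Corollary~\ref{richtorus}, the upper bound~(1) keeping the relevant covering numbers comparable to their expected $\sigma$-dimensional values), one finds, for $\rho$ in a suitable polynomial range, two points of $A$ inside a common $\rho/4$-ball but $\delta$-separated, hence an element of $A^{-1}A=A^{2}$ at distance in $[\delta,\rho]$ from $1$. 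Choosing $\rho$ appropriately — together with, if necessary, a bounded number of commutator operations $(x,y)\mapsto[x,y]$ (which add exponents) to push into the range $[\alpha+\beta,1-\alpha)$, their non-degeneracy being guaranteed by the escape-from-subvarieties Proposition~\ref{escape} applied in the adjoint representation since $A$ is away from subgroups — produces the required starting element, at which point the iteration of the previous two paragraphs applies.

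The step I expect to be the main obstacle is precisely this last one: extracting from purely metric/covering information an element of a bounded product set whose distance to the identity is pinned down \emph{on both sides} in a usable window of exponents. The pigeonhole naturally controls that distance from above (two nearby points) or from below (two separated points), but controlling it simultaneously, and then steering the exponent into $[\alpha+\beta,1-\alpha)$, requires the careful bookkeeping sketched above. Once the first element is in hand, the iteration — tracking the growth of product-set exponents by Ruzsa's inequality and noting that the number of steps remains bounded — is routine.
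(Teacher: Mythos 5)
Your iteration scheme has a genuine gap in the exponent bookkeeping. You want to finish with a last application of Lemma~\ref{abc} at an exponent $\gamma^*$ lying in the target window $[1-\alpha-\beta,\,1-\alpha-\tau]$ (so that the output precision $\delta^{\alpha+\beta+\gamma^*}\le\delta$ and the output length $\delta^{\alpha+\gamma^*}\ge\delta^{1-\tau}$), while Lemma~\ref{abc} itself requires $\gamma^*\ge\alpha+\beta$. These two constraints are compatible only if $\alpha+\beta\le 1-\alpha-\tau$, i.e.\ $2\alpha+\beta<1$. You acknowledge this and try to escape by saying ``we may take $\alpha$ small enough that $2\alpha+\beta<1$,'' citing Remark~\ref{uniform}. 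But $\alpha$ is not at your disposal: it is an output of Lemma~\ref{bgproduct} (Bourgain--Gamburd), and Remark~\ref{uniform} only asserts that $\alpha$ is \emph{bounded} as $\sigma$ stays in a compact subinterval of $(0,d)$, not that it is small. In general $\alpha$ grows as $\sigma$ decreases, and one should expect $\alpha\gg 1$; then $1-\alpha<0$, the target window is empty, and your iteration — whose increments are $\ge\alpha$ and whose starting exponent is $\ge\alpha+\beta$ — has already overshot anything below $1$ before it even begins. There is also no way to run the iteration downward: each step only raises the exponent.

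The missing idea, and the one the paper uses, is to apply Lemma~\ref{abc} \emph{once} but at a rescaled scale rather than many times at scale $\delta$. After the pigeonhole-plus-commutator step (which in the paper produces an element $a_k$ with $d(a_k,1)=\delta^{\gamma_0}$, $\gamma_0\in[\tfrac12,\tfrac34]$ — this part of your proposal matches the paper), one sets $\gamma=\gamma_0(\alpha+\beta)/(1-\gamma_0)\in[\alpha+\beta,3(\alpha+\beta)]$ and $\delta_1=\delta^{1/(\alpha+\beta+\gamma)}$, so that $\delta_1^\gamma=\delta^{\gamma_0}=d(a_k,1)$ and $\delta_1^{\alpha+\beta+\gamma}=\delta$. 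Applying Lemma~\ref{abc} to $A$ at scale $\delta_1$ with this $\gamma$ gives a segment of length $\delta_1^{\alpha+\gamma}=\delta^{1-\beta/(\alpha+\beta+\gamma)}\ge\delta^{1-\tau}$, $\tau=\beta/4(\alpha+\beta)$, in a $\delta$-neighborhood of $A^C$ — and this works no matter how large $\alpha$ is, with $\tau$ still bounded away from $0$. The rescaling both eliminates the constraint $2\alpha+\beta<1$ and replaces your multi-step iteration (and its attendant Ruzsa bookkeeping) by a single application, at the cost of having to check that $A$ still satisfies the hypotheses of Lemma~\ref{abc} at the coarser scale $\delta_1$, which is where the scale-invariance hypothesis~(2) and the choice $\epsilon_1<\epsilon_0/3(\alpha+\beta+\gamma)$ enter.
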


Note that these four conditions become more restrictive when $\epsilon_1$ becomes smaller, and that the aim of this paper is to show that for $\epsilon_1$ small enough, these conditions are incompatible.

\begin{proof}
First note that, provided $\epsilon_1>0$ is sufficiently small, we have
$$N(A,\delta^{1/4}) \geq \delta^{-\frac{\sigma}{4}+\epsilon_1} > \delta^{-\frac{\sigma}{5}} \geq N(A, \delta^{\frac{\sigma}{5d}})$$
so that there exist $x$ and $y$ in $A$ with
$$\delta^{\frac{1}{4}} \leq d(x,y) \leq 2\delta^{\frac{\sigma}{5d}}.$$
In other terms there is an element $a_0\in AA^{-1}$ whose distance to the identity is $\delta^{\kappa}$, with 
$$\frac{1}{4} \geq \kappa \geq \frac{\sigma}{6d}.$$
From $a_0$, we define inductively a sequence of elements $a_k$, in the following way:\\
write $a_k=e^{X_k}$ and apply Proposition~\ref{escape} in the adjoint representation, with vector $X_k$ and subspace $\ker\Ad a_0 -1$, to get an element $x_k$ in some $A^C$ such that
$$d((\Ad x_k)X_k,\ker\Ad a_0-1) =\delta^{O(\epsilon_1)},$$
and let $a_{k+1}=[a_0,x_ka_kx_k^{-1}]$, so that
$$d(a_{k+1},1)=\delta^{O(\epsilon_1)}d(a_0,1)d(a_k^{x_k},1).$$
This ensures that for bounded $k$'s,
$$d(a_k,1)=\delta^{k\kappa+O(\epsilon_1)}.$$
In particular, for some $k\leq\frac{5d}{\sigma}$ we get an element $a_k$ in a product set of $A$ with $d(a_k,1)=\delta^{\gamma_0}$ and
$$\frac{1}{2}\leq \gamma_0 \leq \frac{3}{4}.$$
Now let $\alpha$ and $\beta$ be the parameters given by the previous lemma and define $\gamma=\frac{\gamma_0(\alpha+\beta)}{1-\gamma_0}$, so that
$$\frac{\gamma}{\alpha+\beta+\gamma}=\gamma_0,$$
and
$$\alpha+\beta \leq \gamma \leq 3(\alpha+\beta).$$
Choosing $\epsilon_1>0$ smaller than $\frac{\epsilon_0}{3(\alpha+\beta+\gamma)}$ ensures that the set $A$ viewed at scale $\delta_1=\delta^{\frac{1}{\alpha+\beta+\gamma}}$ satisfies the hypotheses of Lemma~\ref{abc}.
Indeed, one readily checks that the first three conditions are satisfied. For the fourth, note that by chosing a ball $B(x_1,\delta_1)$ of radius $\delta_1$ such that $N(A\cap B(x_1,\delta_1),\delta) \geq\frac{N(A,\delta)}{N(A,\delta_1)}$ and by translating it along points of a $2\delta_1$-separated subset of $AAA$, we find
$$N(AAAA,\delta) \gg \frac{N(A,\delta)}{N(A,\delta_1)}N(AAA,\delta_1),$$
so that
$$N(AAA,\delta_1) \ll \frac{N(AAAA,\delta)}{N(A,\delta)} N(A,\delta_1) \leq \delta^{-3\epsilon_1} N(A,\delta_1).$$
The element $a_k\in A^C$ constructed above satisfies $d(a_k,1)=\delta_1^{\gamma}$, and $\gamma\geq \alpha+\beta$. So we may apply Lemma~\ref{abc} to $A$ at scale $\delta_1$. This shows that a product set $A^C$ of $A$ contains a neighborhood of size $\delta$ of a segment of length $\delta^{1-\frac{\beta}{\alpha+\beta+\gamma}}\geq \delta^{1-\tau}$, where $\tau=\frac{\beta}{4(\alpha+\beta)}$.
\end{proof}

\subsection{From a small segment to the whole ambient group}

From the one-dimensional structure constructed in the previous subsection, we now recover the whole ambient group $G$ in some product set of $A$, hence reaching a contradiction.

\begin{lemma}\label{smallball}
Given $\sigma\in (0,d)$, there exists a constant $C\geq 0$ and $\tau,\epsilon_2>0$ such that for $\epsilon\in(0,\epsilon_2)$, for $\delta>0$ sufficiently small, the following holds.\\
Assume $A$ is a symmetric set in $U$ satisfying
\begin{enumerate}
\item $N(A,\delta)\leq\delta^{-\sigma-\epsilon}$
\item $\forall\rho\geq\delta$, $N(A,\rho)\geq\delta^{\epsilon}\rho^{-\sigma}$
\item $A$ is $\delta^\epsilon$-away from subgroups
\item $N(AAA,\delta)\leq \delta^{-\epsilon}N(A,\delta).$
\end{enumerate}
Then,
$$N(A^C\cap B_{\delta^{1-\tau}},\delta) \geq \delta^{-d\tau+O(\epsilon)},$$
where $B_{\rho}$ is the ball of radius $\rho$ centered at the identity in $G$.
\end{lemma}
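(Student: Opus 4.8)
The plan is to feed the one-dimensional structure from Proposition~\ref{minustau} into the escape property: conjugating the segment by suitable bounded products of elements of $A$ rotates it into $d$ quantitatively independent directions, and multiplying the resulting $d$ segments fills up a ball of radius $\delta^{1-\tau}$, whose covering number one controls with the quantitative inverse function theorem. Concretely, take $\epsilon_2\le\epsilon_1$ (with $\epsilon_1,\tau$ and the neighborhood $U$ as in Proposition~\ref{minustau}) and assume, as we may, that $1\in A$. For $\epsilon<\epsilon_2$ the four hypotheses of the lemma imply those of Proposition~\ref{minustau}, so there is a unit vector $\xi\in\g$ with $\{\exp(t\xi)\,;\,t\in[0,\delta^{1-\tau}]\}$ contained in the $\delta$-neighborhood of a product set $A^{C_0}$.

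Next, since $\g$ is simple the adjoint representation of $G$ on $\g$ is irreducible, so Proposition~\ref{escape} together with Remark~\ref{irred} lets us construct inductively elements $a_1,\dots,a_d\in A^{d}$ such that the unit vectors $\xi_i:=(\Ad a_i)\xi/\|(\Ad a_i)\xi\|$ satisfy $d\bigl(\xi_i,\Span(\xi_1,\dots,\xi_{i-1})\bigr)\ge\delta^{C_1\epsilon}$ for each $i$ — at step $i$ one applies escape in $\g$ with vector $\xi$ and the proper subspace $\Span(\xi_1,\dots,\xi_{i-1})$. In particular $(\xi_i)_{1\le i\le d}$ is a basis of $\g$, and since $a_i$ lies in a bounded set, $\|(\Ad a_i)\xi\|\simeq 1$. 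Conjugating the segment by $a_i$ and rescaling the parameter shows that $\{\exp(s\xi_i)\,;\,s\in[0,c\,\delta^{1-\tau}]\}$ lies in the $O(\delta)$-neighborhood of $A^{2d+C_0}$ for some $c\simeq 1$, so the map $\Phi(t_1,\dots,t_d)=\exp(t_1\xi_1)\cdots\exp(t_d\xi_d)$ sends the cube $[0,c\,\delta^{1-\tau}]^d$ into the $O(\delta)$-neighborhood of $A^C$ with $C:=d(2d+C_0)$.

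It remains to estimate $N(\Phi(\mathrm{cube}),\delta)$. Identifying a neighborhood of $1$ in $G$ with one of $0$ in $\g$ via $\exp$, the map $\Phi$ fixes $0$, has differential $(t_i)\mapsto\sum_i t_i\xi_i$ at the origin — which is $\delta^{-O(\epsilon)}$-bi-Lipschitz by Lemma~\ref{bilipschitz}, using the separation of the $\xi_i$ — and has $O(1)$-Lipschitz differential, so Theorem~\ref{quantitative} shows $\Phi$ is $\delta^{-O(\epsilon)}$-bi-Lipschitz on $B_{\delta^{O(\epsilon)}}$. Since $1-\tau>1/2$, for $\delta$ small the cube $[0,c'\delta^{1-\tau+O(\epsilon)}]^d$ — shrunk by a factor $\delta^{O(\epsilon)}$ so that its $\Phi$-image stays in $B_{\frac12\delta^{1-\tau}}$ — lies in that ball, and the bi-Lipschitz bound gives $N(\Phi(\mathrm{cube}),\delta)\gg\delta^{-d\tau+O(\epsilon)}$. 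As every point of $\Phi(\mathrm{cube})\subset B_{\frac12\delta^{1-\tau}}$ is within $O(\delta)$ of a point of $A^C$ still lying in $B_{\delta^{1-\tau}}$, we conclude $N(A^C\cap B_{\delta^{1-\tau}},\delta)\gg\delta^{-d\tau+O(\epsilon)}$, as desired.

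The one genuinely structural input is the irreducibility of the adjoint representation, which is what makes escape produce conjugates of $\xi$ spanning $\g$; the rest is bookkeeping, and that bookkeeping is the part I expect to need the most care — keeping the product-set exponent and the error neighborhoods under control through the $d$ conjugations and the $d$-fold product, and combining Lemma~\ref{bilipschitz} with Theorem~\ref{quantitative} so that ``the image of a cube of side $\simeq\delta^{1-\tau}$ under a near-isometry'' becomes an honest lower bound for the $\delta$-covering number inside $B_{\delta^{1-\tau}}$.
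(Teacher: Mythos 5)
Your proposal matches the paper's proof essentially line for line: get the segment from Proposition~\ref{minustau}, use iterated escape in the adjoint representation to produce $a_1,\dots,a_d$ whose conjugates of $\xi$ are $\delta^{O(\epsilon)}$-quantitatively independent, feed the $d$-fold product map $(t_i)\mapsto\exp(t_1\xi_1)\cdots\exp(t_d\xi_d)$ into Lemma~\ref{bilipschitz} and the quantitative inverse function theorem, and read off the covering bound. The only cosmetic differences are that you normalize the $\xi_i$ (the paper keeps $X_i=(\Ad a_i)X$, which have norm $\simeq 1$ anyway) and you track the product-set exponent slightly more explicitly than the paper's ``some product set of $A$''.
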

\begin{proof}
Let $\tau$ be the parameter given by Proposition~\ref{minustau}.
Under the assumptions of the lemma, we have, for some unit vector $X\in \g$, for all $t\in [0,\delta^{1-\tau}]$,
$$d(e^{tX},A^C)\leq \delta.$$
From iterated application of Proposition~\ref{escape} in the adjoint representation, there exist elements $a_i$, $1\leq i\leq d$ in a product set of $A$ such that for each $i\geq 2$,
\begin{equation}\label{ajx}
d((\Ad a_i)X,\bigoplus_{j\leq i-1}\R (\Ad a_j)X) \geq \delta^{O(\epsilon)}.
\end{equation}
As $e^{(\Ad a)X}=a e^X a^{-1}$ we also have, for each $i$, for $t_i\in[0,\delta^{1-\tau}]$,
$$d(e^{t_i(\Ad a_i)X},A^C)\leq \delta,$$
and therefore, denoting $X_i=(\Ad a_i)X$,
$$d(e^{t_1X_1}e^{t_2X_2}\dots e^{t_dX_d}, A^C) = O(\delta).$$
The differential at zero of the map
$$\varphi:(t_i)\mapsto e^{t_1X_1}e^{t_2X_2}\dots e^{t_dX_d}$$
is
$$\varphi'(0):(t_i)\mapsto t_1X_1+t_2X_2+\dots+t_dX_d,$$
and, by (\ref{ajx}) and Lemma~\ref{bilipschitz}, $\varphi'(0)$ is $\delta^{-C\epsilon}$-bi-Lipschitz. The quantitative Local Inverse Theorem thus implies that $\varphi$ is $\delta^{-C\epsilon}$-bi-Lipschitz on a neighborhood of size $\delta^{C\epsilon}$ of $0$. In particular,
$$N(\varphi([0,\delta^{1-\tau+C\epsilon}]^d),\delta)\geq \delta^{-d\tau+O(\epsilon)},$$
so that
$$N(A^C\cap B(1,\delta^{1-\tau}),\delta) \geq \delta^{-d\tau+O(\epsilon)}.$$
\end{proof}

We are now ready to prove the Product Theorem, which we recall, for convenience of the reader:
\begin{theorem}\label{sigma}
Let $G$ be a simple Lie group of dimension $d$ and fix a small neighborhood $U$ of the identity as before. Given $\sigma\in (0,d)$, there exists $\epsilon_3=\epsilon_3(\sigma)>0$ such that, for $\delta>0$ sufficiently small, if $A$ is a set in $U$ such that,
\begin{enumerate}
\item $N(A,\delta)\leq\delta^{-\sigma-\epsilon_3}$
\item $\forall\rho\geq\delta$, $N(A,\rho)\geq\delta^{\epsilon_3}\rho^{-\sigma}$
\item $A$ is $\delta^{\epsilon_3}$-away from subgroups,
\end{enumerate}
then
$$N(AAA,\delta)>\delta^{-\epsilon_3}N(A,\delta).$$
\end{theorem}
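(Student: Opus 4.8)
The plan is to derive Theorem~\ref{sigma} by contradiction: suppose that for arbitrarily small $\epsilon_3$ there is a set $A$ satisfying conditions (1)--(3) but with $N(AAA,\delta)\leq\delta^{-\epsilon_3}N(A,\delta)$. We want to show this is impossible once $\epsilon_3$ is small enough (depending only on $\sigma$ and $G$). The strategy is to run the chain of results already established, feeding the output of one into the hypotheses of the next, and arrive at a lower bound on $N(A^C,\delta)$ that violates condition (1) together with the small-tripling hypothesis.

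First I would observe that the hypotheses of Theorem~\ref{sigma}, with $\epsilon_3$ chosen smaller than the $\epsilon_1$ of Proposition~\ref{minustau} and the $\epsilon_2$ of Lemma~\ref{smallball} (both of which depend only on $\sigma$ and $G$), imply the hypotheses of Lemma~\ref{smallball}. Indeed the added assumption $N(AAA,\delta)\leq\delta^{-\epsilon_3}N(A,\delta)$ is exactly condition (4) there, and conditions (1)--(3) match. Also note $A$ may be replaced by $A\cup A^{-1}\cup\{1\}$ at the cost of replacing $\epsilon_3$ by $O(\epsilon_3)$, using Ruzsa's inequality to control $N(AAA,\delta)$, so we may assume $A$ is symmetric. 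Then Lemma~\ref{smallball} yields a constant $C$ and a $\tau>0$, depending only on $\sigma$ and $G$, such that
$$N(A^C\cap B_{\delta^{1-\tau}},\delta)\geq \delta^{-d\tau+O(\epsilon_3)}.$$

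The second step is to convert this local lower bound on $A^C$ into a contradiction with condition (1). The point is that $N(A^C\cap B_{\delta^{1-\tau}},\delta)$ counts, up to $\delta^{O(\epsilon_3)}$ factors, the quantity $N(A^C,\delta)/N(A^C,\delta^{1-\tau})$ — roughly, one can cover $A^C$ by translates of $B_{\delta^{1-\tau}}$, and in the richest such translate the number of $\delta$-balls needed is at least $N(A^C,\delta)/N(A^C,\delta^{1-\tau})$; conversely a ball $B_{\delta^{1-\tau}}$ is covered by $\delta^{-d\tau}$ balls of radius $\delta$, so any single translate contributes at most $\delta^{-d\tau}$, which is why the bound in Lemma~\ref{smallball} is the best possible and forces $N(A^C,\delta)\geq \delta^{-d\tau+O(\epsilon_3)}N(A^C,\delta^{1-\tau})$ after the translation argument. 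On the other hand, by Ruzsa's inequality $N(A^C,\delta)\leq\delta^{-O(\epsilon_3)}N(A,\delta)$, and at scale $\delta^{1-\tau}$ the scale-invariance hypothesis (2) gives $N(A^C,\delta^{1-\tau})\geq N(A,\delta^{1-\tau})\geq\delta^{\epsilon_3}\delta^{-(1-\tau)\sigma}$. Combining, $\delta^{-\sigma-\epsilon_3}\geq N(A,\delta)\geq\delta^{O(\epsilon_3)}N(A^C,\delta)\geq \delta^{-d\tau-(1-\tau)\sigma+O(\epsilon_3)}$, i.e. $-\sigma-\epsilon_3 \leq -d\tau-(1-\tau)\sigma+O(\epsilon_3)$, which simplifies to $\tau(d-\sigma)\leq O(\epsilon_3)$. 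Since $\sigma<d$ and $\tau>0$ depend only on $\sigma$ and $G$ while the implied constant in $O(\epsilon_3)$ is absolute, choosing $\epsilon_3$ small enough makes this false.

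The main obstacle — or rather the place where care is needed — is the bookkeeping in the middle step: making precise the translation argument that turns the local count $N(A^C\cap B_{\delta^{1-\tau}},\delta)$ into the global ratio $N(A^C,\delta)/N(A^C,\delta^{1-\tau})$, and making sure the $\delta^{O(\epsilon)}$ losses accumulated across all the lemmas (Ruzsa applied several times, the passage between scales via inequality~(\ref{presque})-type estimates, the various $\epsilon\mapsto C\epsilon$ substitutions) all remain of the form $O(\epsilon_3)$ with constants depending only on $\sigma$ and $G$, so that the final inequality $\tau(d-\sigma)\leq O(\epsilon_3)$ genuinely gives a contradiction for small $\epsilon_3$. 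Everything else is a direct citation of the preceding propositions.
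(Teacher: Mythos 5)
Your proof is correct, and it closes the argument after Lemma~\ref{smallball} by a genuinely different route than the paper. Let me spell out the comparison. The paper splits into cases: if $\sigma\leq\frac{d\tau}{2}$ the single application of Lemma~\ref{smallball} already gives too many $\delta$-balls in $A^C$; otherwise it iterates Lemma~\ref{smallball} at the nested scales $\delta_k=\delta^{(1-\tau)^k}$, re-verifying the hypotheses of the lemma at each scale, multiplying the resulting local counts to reach $N(A^C,\delta)\geq\delta^{-d(1-(1-\tau)^K)+O(\epsilon)}\geq\delta^{-\frac{\sigma+d}{2}+O(\epsilon)}$, and then invoking Ruzsa. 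You instead apply the lemma exactly once at scale $\delta$ and combine it with the translation/pigeonhole step (which is the same device the paper uses inside the proof of Proposition~\ref{minustau} to pass from scale $\delta$ to $\delta_1$) and, crucially, with hypothesis (2) evaluated at the single scale $\delta^{1-\tau}$. This gives $N(A^{2C},\delta)\geq\delta^{-d\tau+O(\epsilon_3)}N(A^C,\delta^{1-\tau})$, and then $N(A^C,\delta^{1-\tau})\geq\delta^{\epsilon_3-(1-\tau)\sigma}$ and Ruzsa plus hypothesis (1) force $\tau(d-\sigma)\leq O(\epsilon_3)$, a contradiction for $\epsilon_3$ small. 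Your version is shorter, has no case split, and avoids the bookkeeping of checking the lemma's hypotheses at $K$ intermediate scales; what it gives up is a little robustness, since it uses the lower bound hypothesis (2) directly in the final combination rather than only as a device to verify hypotheses at intermediate scales. One minor point of care that you correctly flag in prose but should make explicit in a final write-up: the translation step lands you in $A^{2C}$, not $A^C$, so the passage back to $N(A,\delta)$ requires one more application of Ruzsa, and all the $\delta^{O(\epsilon_3)}$ losses must be tracked so that the implied constants depend only on $G$ and $\sigma$.
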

\begin{proof}
By the Plünnecke-Ruzsa inequalities \cite[Theorem~6.8]{taoestimates}, it suffices to prove the theorem in the case $A$ is symmetric.
Now assume for a contradiction that $A$ is a symmetric set satisfying the assumptions of the theorem, and that
$$N(AAA,\delta)\leq \delta^{-\epsilon}N(A,\delta).$$
Let $\epsilon_2,\tau>0$ be given by Lemma~\ref{smallball}.
If $\sigma\leq\frac{d\tau}{2}$ (say), then Lemma~\ref{smallball} immediately yields the desired contradiction. Otherwise, choose an integer $K$ such that $(1-\tau)^K\leq \frac{d-\sigma}{2d}$, and apply the lemma again, to the set $A$ viewed at each scale $\delta_k=\delta^{(1-\tau)^k}$ (choosing $\epsilon$ sufficiently small so that at each of those scales, $A$ satisfies the hypotheses of Lemma~\ref{minustau}). This shows that for some product set $A^C$ of $A$, for each $k\leq K$,
$$N(A^C\cap\delta_k,\delta_{k-1})\geq \delta_{k-1}^{-d\tau+O(\epsilon)}.$$
Therefore,
\begin{align*}
N(A^{C},\delta) & \geq N(A\cap B_{\delta_1},\delta) N(A\cap B_{\delta_2},\delta_1) \dots N(A\cap B_{\delta_K},\delta_{K-1})\\
& \geq \delta^{O(\epsilon)}(\delta\delta_1\dots\delta_{K-1})^{-d\tau}\\
& = \delta^{-d\tau(1+(1-\tau)+\dots+(1-\tau)^{K-1})+O(\epsilon)}\\
& = \delta^{-d (1-(1-\tau)^K)+O(\epsilon)}
\geq \delta^{-\frac{\sigma+d}{2}+O(\epsilon)}
\end{align*}
which, choosing $\epsilon>0$ small enough (in terms of $\sigma$ and $d$), yields a contradiction.
\end{proof}

\begin{remark}
Using Remark~\ref{uniform} and carefully examining our proof, one sees that if $\sigma$ remains pinched in an interval $[\kappa,d-\kappa]$, then the corresponding $\epsilon_3$ remains bounded away from zero. This fact will be essential for what follows.
\end{remark}

It is worth noting that one may weaken slightly the assumptions of the Product Theorem~\ref{sigma} in the following way:

\begin{theorem}\label{sigmakappa}
Let $G$ be a simple Lie group of dimension $d$. There exists a neighborhood $U$ of the identity in $G$ such that the following holds.\\
Given $\theta\in (0,d)$ and $\kappa>0$, there exists $\epsilon=\epsilon(\theta,\kappa)>0$ such that, for $\delta>0$ sufficiently small, if $A\subset U$ is a set satisfying
\begin{enumerate}
\item $N(A,\delta) \leq \delta^{-\theta-\epsilon}$,
\item for all $\rho\geq\delta$, $N(A,\rho)\geq \delta^\epsilon\rho^{-\kappa}$,\label{ro}
\item $A$ is $\delta^\epsilon$-away from subgroups,
\end{enumerate}
then
\begin{equation}\label{growth}
N(AAA,\delta)>\delta^{-\epsilon}N(A,\delta).
\end{equation}
\end{theorem}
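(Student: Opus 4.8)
The plan is to show that the proof of Theorem~\ref{sigma} already gives Theorem~\ref{sigmakappa}, once one observes that the two occurrences of $\sigma$ in the hypotheses of Theorem~\ref{sigma} play disjoint roles: the lower bound $N(A,\rho)\ge\delta^{\epsilon}\rho^{-\sigma}$ is used only to keep covering numbers of $A$ large at the scales that come up, whereas the upper bound $N(A,\delta)\le\delta^{-\sigma-\epsilon}$ is used exactly once, at the very end, to contradict an estimate of the form $N(A^{C},\delta)\ge\delta^{-(d+\sigma)/2+O(\epsilon)}$ produced by the machinery. Accordingly I would rerun the whole argument with $\kappa$ wherever the lower bound enters and with $\theta$ at the final contradiction.

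After the customary reductions --- pass to a symmetric set by Pl\"unnecke--Ruzsa \cite[Theorem~6.8]{taoestimates}, assume $G$ has trivial center, and note that for $\epsilon$ small hypotheses (1) and (2) force $\kappa\le\theta+2\epsilon$ so that one may assume $\kappa\le\theta$ --- assume for a contradiction that $N(AAA,\delta)\le\delta^{-\epsilon}N(A,\delta)$. The first thing I would check, by inspecting the proofs, is that none of Corollary~\ref{richtorus}, Lemma~\ref{abc}, Proposition~\ref{minustau} or Lemma~\ref{smallball} actually uses the upper bound on $N(A,\delta)$. The rich torus uses only small tripling and the away-from-subgroups hypothesis. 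Lemma~\ref{abc} needs, besides these, only a bound $N(A,\delta)\ge\delta^{-\sigma_{0}}$ for \emph{some} fixed $\sigma_{0}>0$; one takes $\sigma_{0}=\kappa/2$, which is legitimate since hypothesis (2) gives $N(A,\delta)\ge\delta^{-\kappa+\epsilon}$, and by Remark~\ref{uniform} the constants $\alpha,\beta,s$, and hence the constant $\tau$ of Proposition~\ref{minustau}, stay bounded, with $\beta$ and $\tau$ bounded away from $0$, as long as $\sigma_{0}$ stays bounded away from $0$. The derivation of Lemma~\ref{smallball} from Proposition~\ref{minustau} is purely geometric (iterated escape from subvarieties). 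The scale invariance (2), now with exponent $\kappa$, enters only in one sub-step of the proof of Proposition~\ref{minustau}: the extraction of an element $a_{0}\in AA^{-1}$ with $\delta^{c_{2}}\le d(a_{0},1)\le\delta^{c_{1}}$ for fixed constants $0<c_{1}<c_{2}<\tfrac14$ depending only on $\kappa,d$.

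That sub-step is the one genuine modification, and the only real obstacle: when $\kappa<\sigma$ the original argument (comparing $N(A,\delta^{1/4})$ with $N(A,\delta^{\sigma/(5d)})$, the latter via the volume bound) need not produce such an $a_{0}$. Instead I would pigeonhole over scales. Put $h(t):=\log_{1/\delta}N(A,\delta^{t})$ for $t\in[0,1]$; then $h$ is non-decreasing, $h(0)$ is $\delta$-negligible, every difference quotient of $h$ is at most $d$ (volume bound), and $h(t)\ge t\kappa-\epsilon$ by (2). If $h$ were constant on the fixed interval $[\kappa/(200d),\,1/100]$, its value there would be both $\ge\kappa/100-\epsilon$ by (2) and $\le(\kappa/200d)\cdot d=\kappa/200$ by the slope bound --- impossible once $\epsilon$ and $\delta$ are small. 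Hence $N(A,\delta^{1/100})>N(A,\delta^{\kappa/(200d)})$, so some $\delta^{\kappa/(200d)}$-ball meets $A$ in two points $x,y$ with $d(x,y)\ge\delta^{1/100}$, and $a_{0}:=xy^{-1}$ then satisfies $\log_{\delta}d(a_{0},1)\in[\kappa/(400d),\,1/50]$ for $\delta$ small. Since the exponent of $a_{0}$ lies in a fixed subinterval of $(0,\tfrac14)$ bounded away from $0$, the commutator construction of Proposition~\ref{minustau} still reaches, inside a bounded product set, an element at distance $\delta^{\gamma_{0}}$ with $\gamma_{0}\in[\tfrac12,\tfrac34]$ --- indeed $\gamma_{0}\le\tfrac12+\tfrac1{50}$ --- so that the rest of that proof, and all of Lemma~\ref{smallball}, go through unchanged. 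The identical pigeonholing supplies such an $a_{0}$ at each scale $\delta_{k}=\delta^{(1-\tau)^{k}}$, $k\le K$, occurring in the iteration below, since it uses only (2) and the volume bound, with a uniform loss over these finitely many scales.

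With the moderate-pair step repaired I would run the endgame of the proof of Theorem~\ref{sigma} unchanged. If $\theta\le d\tau/2$, a single application of Lemma~\ref{smallball} gives $N(A^{C},\delta)\ge\delta^{-d\tau+O(\epsilon)}$, contradicting $N(A^{C},\delta)\le\delta^{-O(\epsilon)}N(A,\delta)\le\delta^{-\theta-O(\epsilon)}$ (Ruzsa together with hypothesis (1)). Otherwise one chooses $K$ with $(1-\tau)^{K}\le\frac{d-\theta}{2d}$ and iterates Lemma~\ref{smallball} at the scales $\delta_{k}$, $k\le K$, to obtain $N(A^{C},\delta)\ge\delta^{-d(1-(1-\tau)^{K})+O(\epsilon)}\ge\delta^{-(d+\theta)/2+O(\epsilon)}$, again contradicting $N(A^{C},\delta)\le\delta^{-\theta-O(\epsilon)}$ because $(d+\theta)/2>\theta$. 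In both cases $\epsilon$ is fixed small enough at the end in terms of $\theta,\kappa,d$ and $G$, absorbing the finitely many thresholds coming from the away-from-subgroups bookkeeping across scales, from the condition $\epsilon<\epsilon_{0}/(3(\alpha+\beta+\gamma))$ in Proposition~\ref{minustau}, and from the accumulation of the $O(\epsilon)$ error over the $K$ iterations; following the uniformity remarks after Theorem~\ref{sigma} and Lemma~\ref{bgproduct}, this $\epsilon$ stays bounded away from $0$ as long as $\theta$ stays bounded away from $d$ and $\kappa$ away from $0$.
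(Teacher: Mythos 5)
Your proof is correct, but it takes a genuinely different route from the paper's. The paper derives Theorem~\ref{sigmakappa} by invoking Theorem~\ref{sigma} as a black box and iterating over scales: assuming $N(AAA,\delta)\leq\delta^{-\epsilon_3}N(A,\delta)$, it finds a scale $\rho_1$ at which the lower bound with exponent $\theta$ fails (i.e.\ $N(A,\rho_1)\leq\rho_1^{-\theta}\delta^{\epsilon_3}$), passes to that scale with a strictly smaller upper-bound exponent $\theta-\epsilon_3$, and repeats until the exponent drops below $\kappa$, which must happen in at most $K\approx\theta/\epsilon_3$ steps; at the terminal scale Theorem~\ref{sigma} applies and the growth propagates back down to $\delta$ by a standard $N(AAAA,\delta)\geq N(AAA,\rho_k)\,N(A,\delta)/N(A,\rho_k)$ argument. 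You instead reopen the proof of Theorem~\ref{sigma} and observe, correctly, that the upper bound $N(A,\delta)\leq\delta^{-\sigma-\epsilon}$ is used only at the final contradiction, while the lower bound enters only to produce the moderate element $a_0$ and to guarantee enough mass in the rich torus; this decoupling, together with Remark~\ref{uniform}, lets you run Proposition~\ref{minustau}/Lemma~\ref{smallball} with exponent $\kappa$ in the lower bound and close the contradiction against $\theta$. Both arguments are sound and of comparable length; the paper's is more modular (Theorem~\ref{sigma} is reused verbatim, and no internal constants need to be re-tracked), while yours is more transparent about which hypothesis does what and avoids the nested scale-iteration bookkeeping.

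One simplification worth noting: your pigeonhole over the log-scale profile $h(t)=\log_{1/\delta}N(A,\delta^t)$ is not actually needed. The paper's original extraction of $a_0$ already works with $\kappa$ substituted for $\sigma$: from hypothesis (2), $N(A,\delta^{1/4})\geq\delta^{\epsilon-\kappa/4}$, while the volume bound gives $N(A,\delta^{\kappa/(5d)})\ll\delta^{-\kappa/5}$; for $\epsilon<\kappa/20$ and $\delta$ small, $\delta^{\epsilon-\kappa/4}>\delta^{-\kappa/5}$, so one finds $x,y\in A$ with $\delta^{1/4}\leq d(x,y)\leq 2\delta^{\kappa/(5d)}$ and hence $a_0=xy^{-1}$ with $\log_\delta d(a_0,1)\in[\kappa/(6d),1/4]$, exactly the moderate element the commutator construction needs. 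Your pigeonhole is correct but does more work than necessary.
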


Compared with Theorem~\ref{sigma} the nontrivial new case is when $\kappa<\theta$.
This version is the one needed for the application to the spectral gap property in compact simple Lie groups \cite{benoistsaxcesg}. The argument showing that Theorem~\ref{sigma} implies Theorem~\ref{sigmakappa} is identical, up to minor changes, to the one given by Bourgain and Gamburd in \cite{bourgaingamburdsu2} for the proof of their Proposition~3.2, but we include it for completeness.

\begin{proof}[Proof of Theorem~\ref{sigmakappa}]
Choose $\epsilon_3>0$ such that Theorem~\ref{sigma} holds for all $\sigma\in[\frac{\kappa}{2},\theta]$. Let $K$ be an integer such that $K\epsilon_3\geq \theta$ and fix a positive $\epsilon< \frac{1}{3}\epsilon_3\left(\frac{\epsilon_3}{\theta}\right)^K$. Let $A$ be a set satisfying the hypotheses of the theorem for such choice of $\epsilon$.\\
For $\sigma=\theta$, the set $A$ satisfies all hypotheses of Theorem~\ref{sigma} except \ref{ro}. Assume for a contradiction that we have
$$N(AAA,\delta)\leq \delta^{-\epsilon_3}N(A,\delta),$$
this implies that for some $\rho_1\geq\delta$,
$$N(A,\rho_1)\leq \rho_1^{-\theta}\delta^{\epsilon_3}.$$
In particular,
$$\rho_1\leq \delta^{\frac{\epsilon_3}{\theta}}.$$
We now view $A$ at scale $\rho_1$. We have $N(A,\rho_1)\leq \rho_1^{-\theta+\epsilon_3}$ and, by the choice we made on $\epsilon$,
\begin{itemize}
\item for all $\rho\geq\rho_1$, $N(A,\rho)\geq\rho^{-\kappa}\rho_1^{\epsilon_3}$
\item $A$ is $\rho_1^{\epsilon_3}$-away from subgroups.
\end{itemize}
We now iterate this procedure: assume we have defined a scale $\rho_k$ such that
\begin{itemize}
\item $\rho_k\leq \delta^{\left(\frac{\epsilon_3}{\theta}\right)^k}$
\item $N(A,\rho_k)\leq \rho_k^{-\theta+k\epsilon_3}$
\item for all $\rho\geq\rho_k$, $N(A,\rho)\geq\rho^{-\kappa}\rho_k^{\epsilon_3}$
\item $A$ is $\rho_k^{\epsilon_3}$-away from subgroups.
\end{itemize}
(Note that the second and third conditions imply that $\theta-k\epsilon_3\geq\frac{\kappa}{2}$ and $k\leq K$.) If we have
$$N(AAA,\rho_k)\leq \rho_k^{-\epsilon_3}N(A,\rho_k),$$
Theorem~\ref{sigma} yields a scale $\rho_{k+1}\geq\rho_k$ such that
$$N(A,\rho_{k+1})\leq \rho_{k+1}^{-\theta+k\epsilon_3}\rho_k^{\epsilon_3}.$$
This implies in particular
\begin{itemize}
\item $\rho_{k+1}\leq \delta^{\left(\frac{\epsilon_3}{\theta}\right)^{k+1}}$
\item $N(A,\rho_{k+1})\leq \rho_{k+1}^{-\theta+(k+1)\epsilon_3}$
\end{itemize}
and, by the choice we made on $\epsilon$,
\begin{itemize}
\item for all $\rho\geq\rho_{k+1}$, $N(A,\rho)\geq\rho^{-\kappa}\rho_{k+1}^{\epsilon_3}$
\item $A$ is $\rho_{k+1}^{\epsilon_3}$-away from subgroups.
\end{itemize}
As $K\epsilon_3\geq\theta$, this procedure must stop for some $k\leq K$. This means
$$N(AAA,\rho_k)\geq \rho_k^{-\epsilon_3}N(A,\rho_k).$$
But then,
\begin{align}
N(AAAA,\delta) & \geq N(AAA,\rho_k)\frac{N(A,\delta)}{N(A,\rho_k)}\\
& \geq \rho_k^{-\epsilon_3}N(A,\delta)\\
& \geq \delta^{-3\epsilon}N(A,\delta).
\end{align}
Using Ruzsa's inequality $N(AAA,\delta) \geq \left(\frac{N(AAAA,\delta)}{N(A,\delta)}\right)^{\frac{1}{3}}N(A,\delta)$, this yields the desired growth statement
$$N(AAA,\delta) \geq \delta^{-\epsilon}N(A,\delta).$$
\end{proof}

\bibliographystyle{plain}
\bibliography{bibliography}

\end{document}